\documentclass[reqno,tbtags,a4paper,12pt]{amsart}

\usepackage[bbgreekl]{mathbbol}
\usepackage{amsfonts}

\DeclareSymbolFontAlphabet{\mathbb}{AMSb}
\DeclareSymbolFontAlphabet{\mathbbl}{bbold}

\usepackage{amsmath,amssymb,amsthm,amsfonts,amscd,array,graphicx}
\usepackage[in]{fullpage}

\unitlength=1mm
\setcounter{MaxMatrixCols}{12}
\binoppenalty=5000
\relpenalty=5000

\newcommand{\R}{\mathbb{R}}

\newcommand{\CK}{\mathcal{K}}

\newcommand{\C}{\mathbb{C}}
\newcommand{\Z}{\mathbb{Z}}

\newcommand{\X}{\mathbb{X}}
\newcommand{\V}{\mathbb{V}}

\newcommand{\bS}{\mathbb{S}}

\newcommand{\CV}{\mathcal{V}}

\newcommand{\ba}{\mathbf{a}}

\newcommand{\bx}{\mathbf{x}}
\newcommand{\by}{\mathbf{y}}
\newcommand{\bz}{\mathbf{z}}
\newcommand{\bw}{\mathbf{w}}

\newcommand{\be}{\mathbf{e}}

\newcommand{\tOmega}{\widetilde{\Omega}}

\newcommand{\bell}{\boldsymbol{\ell}}

\newcommand{\Hom}{\mathop{\mathrm{Hom}}\nolimits}

\newcommand{\diam}{\mathop{\mathrm{diam}}\nolimits}
\newcommand{\rank}{\mathop{\mathrm{rank}}\nolimits}
\newcommand{\spn}{\mathop{\mathrm{span}}\nolimits}
\newcommand{\supp}{\mathop{\mathrm{supp}}\nolimits}

\newcommand{\orient}{\mathrm{or}}
\newcommand{\dist}{\mathop{\mathrm{dist}}\nolimits}

\renewcommand{\Re}{\mathop{\mathrm{Re}}\nolimits}

\newtheorem{theorem}{Theorem}[section]
\newtheorem{propos}[theorem]{Proposition}
\newtheorem{cor}[theorem]{Corollary}
\newtheorem{lem}[theorem]{Lemma}

\newtheorem*{bell-conj}{Bellows conjecture}
\theoremstyle{definition}

\newtheorem{defin}[theorem]{Definition}
\newtheorem{remark}[theorem]{Remark}
\numberwithin{equation}{section}

\author{Alexander A. Gaifullin}

\thanks{}

\title{The bellows conjecture for small flexible polyhedra in non-Euclidean spaces}
\thanks{This work is supported by the Russian Science Foundation under grant
14-50-00005}
\date{}

\address{Steklov Mathematical Institute of Russian Academy of Sciences, Gubkina str. 8, Moscow, 119991, Russia}

\email{agaif@mi.ras.ru}

\begin{document}

\keywords{Flexible polyhedron, the bellows conjecture, simplicial collapse, analytic continuation}

\begin{abstract}
The \textit{bellows conjecture} claims that the volume of any flexible polyhedron of dimension $3$ or higher is constant during the flexion. The bellows conjecture was proved for flexible polyhedra in the Euclidean spaces $\R^n$, $n\ge 3$, and for bounded flexible polyhedra in the odd-dimensional Lobachevsky spaces~$\Lambda^{2m+1}$, $m\ge 1$. Counterexamples to the bellows conjecture are known in all open hemispheres~$\bS^n_+$, $ n\ge 3$. The aim of this paper is to prove that, nonetheless, the bellows conjecture is true for all flexible polyhedra in either~$\bS^n$ or~$\Lambda^n$, $n\ge 3$, with sufficiently small edge lengths. 
\end{abstract}

\maketitle

\section{Introduction}

Let $\X^n$ be one of the three $n$-dimensional spaces of constant curvature, that is, the Euclidean space~$\R^n$ or the spherical space~$\bS^n$ or the Lobachevsky space~$\Lambda^n$. We shall always normalize the metrics on~$\bS^n$ and~$\Lambda^n$ so that their sectional curvatures are equal to~$1$ and~$-1$, respectively.

A \textit{flexible polyhedron\/} is a connected  oriented closed $(n-1)$-dimensional polyhedral surface~$P$ in~$\X^n$ that admits a continuous deformation~$P_t$ not induced by an isometry of~$\X^n$ such that all faces of~$P_t$ remain congruent to themselves during the deformation. The dihedral angles at $(n-2)$-dimensional faces of~$P_t$ may vary continuously, so it is useful to imagine that $P_t$ has hinges at all $(n-2)$-dimensional faces.
The polyhedral surface~$P$ is allowed to be self-intersecting. Nevertheless, embedded flexible polyhedra, which are sometimes called \textit{flexors\/}, are especially interesting.
A rigorous definition of a flexible polyhedron will be given in Section~\ref{section_flex}.

In dimension~$2$ all generic polygons with at least four sides are flexible. The situation changes drastically in dimensions~$3$ and higher. Examples of flexible polyhedra become rare and not very easy to construct. The first examples of three-dimensional flexible polyhedra were \textit{Bricard's self-intersecting flexible octahedra\/}, see~\cite{Bri97}. Now, examples of self-intersecting flexible polyhedra are known in all spaces of constant curvature of all dimensions $n\ge 3$, see~\cite{Gai13}. The first example of a flexor (an embedded flexible polyhedron) in~$\R^3$ was constructed by Connelly~\cite{Con77}; similar examples exist in~$\Lambda^3$ and~$\bS^3$, cf.~\cite{Kui78}. The author~\cite{Gai15a} constructed examples of embedded flexible polyhedra in the open hemispheres~$\bS^n_+$ for all $n\ge 3$. It still remains unknown if there exist embedded flexible polyhedra in the Euclidean spaces~$\R^n$ or in the Lobachevsky spaces~$\Lambda^n$, where $n\ge 4$.

If the polyhedral surface~$P$ is embedded, then the oriented $n$-dimensional volume of the region bounded by it will be called the \textit{oriented volume\/} of the polyhedron~$P$. For self-intersecting polyhedra, there is a suitable notion of a \textit{generalized oriented volume}, see Section~\ref{section_flex} for details. 

\begin{bell-conj}
Let $\X^n$ be the Euclidean space~$\R^n$ or the Lobachevsky space~$\Lambda^n$ or the open hemisphere~$\bS^n_+$\,,  where $n\ge 3$. Then the generalized oriented volume of any flexible polyhedron in~$\X^n$ is constant during the flexion.
\end{bell-conj}

Originally this conjecture was proposed for flexible polyhedra in~$\R^3$, see~\cite{Con78},~\cite{Kui78}. Hence the above conjecture is sometimes called the \textit{generalized bellows conjecture}.

Studying the bellows conjecture, it is standard to restrict ourselves to considering of \textit{simplicial polyhedra\/}, that is, polyhedra all faces of which are simplices. Indeed, any polyhedral surface has a simplicial  subdivision. Passing to this subdivision, we introduce new hinges. Hence all flexions of the initial polyhedron induce flexions of the obtained simplicial polyhedron, and, possibly, some new flexions appear. Thus the bellows conjecture for simplicial polyhedra would imply the bellows conjecture for arbitrary polyhedra. So further we always consider simplicial polyhedra only. Notice that for simplicial polyhedra, a flexion is a deformation that preserves the combinatorial type and the edge lengths.

The classical bellows conjecture (in~$\R^3$) was proved by Sabitov~\cite{Sab96}--\cite{Sab98b}; another proof was given in~\cite{CSW97}. Recently, the author has proved the bellows conjecture for all flexible polyhedra in the Euclidean spaces~$\R^n$, $n\ge 4$ (see~\cite{Gai11},~\cite{Gai12}), and for all bounded flexible polyhedra in the odd-dimensional Lobachevsky spaces~$\Lambda^{2m+1}$, $m\ge 1$ (see~\cite{Gai15b}). On the other hand, the bellows conjecture is not true for flexible polyhedra in the hemispheres~$\bS^n_+$, $n\ge 3$. The first counterexample in~$\bS^3_+$ was constructed by Alexandrov~\cite{Ale97}. The counterexamples in all dimensions were constructed by the author~\cite{Gai15a}. It is still unknown if the bellows conjecture is true for flexible polyhedra in the even-dimensional Lobachevsky spaces.

The aim of the present paper is to prove that, nonetheless, the bellows conjecture is true for flexible polyhedra in all spheres~$\bS^n$ and in all Lobachevsky spaces~$\Lambda^n$, where $n\ge 3$, provided that the lengths of edges of these flexible polyhedra are small enough.

\begin{theorem}\label{theorem_main}
Let\/ $\X^n$ be either\/~$\bS^n$ or\/~$\Lambda^n,$ $n\ge 3$. Let $P_t$ be a simplicial flexible polyhedron in~$\X^n$ with $m$ vertices such that all edges of~$P_t$ have lengths smaller than $2^{-m^2(n+4)}$. Then the generalized oriented volume of~$P_t$ is constant during the flexion.
\end{theorem}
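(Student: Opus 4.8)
The plan is to reduce Theorem~\ref{theorem_main} to a Sabitov-type algebraic relation for the volume, in the spirit of the Euclidean bellows theorem, but carried out with the curvature kept as an analytic parameter and with quantitative control that is only available in the small-edge regime. As usual we may assume that $P_t$ is simplicial with a fixed combinatorial type $K$, a connected oriented closed $(n-1)$-dimensional pseudomanifold on the vertex set $\{1,\dots,m\}$; a flexion is then a path in the configuration space of $K$ keeping the edge lengths $\ell_{ij}$ $(ij\in K)$ fixed. Working in the standard quadric model of $\X^n$ inside $\R^{n+1}$ (for $\bS^n$) or $\R^{n,1}$ (for $\Lambda^n$), record for each realization the edge Gram data $c_{ij}=\langle p_i,p_j\rangle$, equal to $\cos\ell_{ij}$ on $\bS^n$ and to $-\cosh\ell_{ij}$ on $\Lambda^n$; the hypothesis $\ell_{ij}<2^{-m^2(n+4)}$ says precisely that all edge Gram data lie in a tiny polydisc about the totally degenerate value ($c_{ij}=1$, resp.\ $c_{ij}=-1$), at which the polyhedron together with any simplicial filling collapses to a point.

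The main step is to prove that there is a monic polynomial
\[
\Phi(V)=V^{N}+\sum_{k=1}^{N}A_k\,V^{\,N-k}
\]
whose coefficients $A_k$ are analytic functions of the edge Gram data $\{c_{ij}\}_{ij\in K}$ on a polydisc of radius comparable to $2^{-m^2(n+4)}$ about the degenerate point, and such that the generalized oriented volume of every realization of $K$ with those edge Gram data is a root of $\Phi$. To construct $\Phi$ I would pass to the projective (Beltrami--Klein, resp.\ gnomonic) model, in which a realization of $K$ becomes a rectilinear polyhedron in a small ball about the origin of $\R^n$, its generalized oriented volume becomes the integral of an algebraic density, and the non-Euclidean edge lengths become analytic functions of the vertex coordinates; one then fixes a simplicial collapse of a filling of $K$ to a point and builds the relation by induction along the collapse, at each elementary step expressing the volume of the removed simplex through the squared coordinate distances by a Cayley--Menger determinant and \emph{analytically continuing} the resulting a~priori multivalued algebraic relations, with the curvature carried along as an analytic parameter throughout. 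This is the analog of Sabitov's theorem and of the author's Euclidean generalized bellows theorem, the new feature being that the relations between simplex volumes and edge data are no longer polynomial but only analytic, so one must keep track of domains of convergence; smallness guarantees both that the projective model is nondegenerate and that these domains contain a fixed polydisc. Finally, the leading coefficient of $\Phi$ equals $1$ at the degenerate point, hence is nonvanishing near it, and estimating how fast it can approach a zero — a computation accumulating a bounded loss at each of the at most $\binom m2$ edges and each of the $n$ coordinate directions — is where the explicit constant $2^{-m^2(n+4)}$ gets pinned down.

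Granting the relation, the theorem follows as in the classical case: along a flexion the edge Gram data $\{c_{ij}\}$ are constant, so $\Phi$ is one fixed monic polynomial, whence the generalized oriented volume of $P_t$ takes values in its finite root set; being continuous in $t$ on a connected parameter interval, it is constant. For $\X^n=\bS^n$ one remarks in addition that a polyhedron all of whose edges are shorter than $2^{-m^2(n+4)}$, together with a filling, lies in a ball far smaller than $\bS^n$, so its generalized oriented volume is an honest number (not merely defined modulo $\vol\bS^n$) and the argument applies without change.

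I expect the main obstacle to be the construction of $\Phi$, i.e.\ the non-Euclidean Sabitov-type theorem. Over $\R^n$ one has genuine polynomial identities coming from Cayley--Menger determinants; over $\bS^n$ or $\Lambda^n$ edge lengths and volumes are transcendental in the coordinates, and the point of the projective model together with analytic continuation is to recover algebraic control of the intermediate relations while treating the curvature analytically, and then to re-expand the outcome in the true non-Euclidean edge data — a step that is legitimate, and that yields a \emph{monic} relation with leading coefficient still equal to $1$, only when the edges are as short as the theorem requires. A secondary difficulty is organising the simplicial collapse so that every codimension-two face is eventually eliminated and the induction closes with coefficients that remain analytic, rather than merely formal, on the polydisc dictated by the bound $2^{-m^2(n+4)}$.
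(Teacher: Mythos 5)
Your reduction step (a monic relation $\Phi(V)$ with coefficients depending only on the edge data forces the volume into a finite set, hence constancy by continuity) is the standard Sabitov argument and would indeed finish the proof; the problem is that the central object, the monic polynomial with coefficients analytic in the edge Gram data, is exactly the part you have not constructed, and the method you sketch for building it does not go through. In $\bS^n$ and $\Lambda^n$ the volume of a simplex is a transcendental (Schl\"afli-type) function of its edge lengths, not an algebraic one, so there is no Cayley--Menger-type identity relating the volume of a removed simplex to the edge data, and the elimination machinery that produces a \emph{monic} relation in the Euclidean case (integrality of the squared volume over the polynomial ring in the squared edge lengths, resultants, or places) has no analytic substitute: from merely analytic relations between volumes and edge data one cannot eliminate the vertex coordinates and retain a monic polynomial with controlled (let alone analytic) coefficients. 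Passing to the Beltrami--Klein or gnomonic model does not repair this, because the non-Euclidean volume form and the non-Euclidean edge lengths are then both non-algebraic in the rectilinear coordinates, and your claim that the leading coefficient equals $1$ near the degenerate point is an assertion about an object whose existence is precisely what is in question. No Sabitov-type theorem in $\bS^n$ or $\Lambda^n$ is known, even for small polyhedra, and the paper deliberately avoids trying to prove one.

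The paper's actual route is different and avoids any polynomial relation for the volume. It complexifies the configuration space: $\Sigma_{\C}\subset Q^m$ is an affine variety cut out by the (polynomial) Gram equations~\eqref{eq_polynom}, and the oriented simplex volume is written as the explicit holomorphic, \emph{bounded} integral $F$ of~\eqref{eq_F(A)} on the domain $\Omega$ where all pairwise products satisfy $|\langle\ba_j,\ba_k\rangle-1|<1$. The combinatorial heart is Theorem~\ref{theorem_KCkappa} (a quantitative version of the Main Lemma of~\cite{Gai12}): since $\rank(A^tA)\le n+1$, the clique complex $\CK(A^tA,\varkappa)$ collapses to dimension $<n-1$ for a suitable $\varkappa$, so the cycle $\xi$ bounds a chain $\eta_l$ inside it, and summing $F$ over $\eta_l$ gives local holomorphic functions $\Phi_l$ that glue (Lemmas~\ref{lem_zero_sum} and~\ref{lem_Phi12}) to a single bounded holomorphic function $\Phi$ on $\Sigma_{\C}$ extending $\CV_{\xi}$; a Liouville-type theorem (Corollary~\ref{cor_bhf}) then forces $\Phi$ to be constant on each irreducible component, which is Theorem~\ref{theorem_main2}. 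Note also that the constant $2^{-m^2(n+4)}$ has a different origin than you guessed: it comes from the pigeonhole choice of $\varkappa$ in Corollary~\ref{cor_KCkappa}, which guarantees a gap in the scale of the quantities $|g_{uv}-1|$ so that $K\subseteq\CK(A^tA,\varkappa)$ for every point of $\Sigma_{\C}$, not from estimating a leading coefficient. If you want to salvage your write-up, the finite-valuedness of the volume for fixed edge data should be obtained as a \emph{consequence} of constancy on irreducible components of $\Sigma_{\C}$, not used as the engine of the proof.
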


This result is obtained by joining together the combinatorial approach developed by the author in~\cite{Gai12} to prove the bellows conjecture in the Euclidean spaces of dimensions~$4$ and higher, and the analytic approach developed by the author in~\cite{Gai15b} to prove the bellows conjecture in the odd-dimensional Lobachevsky spaces. Namely, we study the analytic continuation of the volume function to the complexification of the configuration space of polyhedra with the prescribed edge lengths, and then use a quantitative analog of the main combinatorial lemma in~\cite{Gai12} to prove that this analytic continuation is well defined and single-valued.

This paper is organized in the following way. In Section~\ref{section_flex} we give precise definitions of simplicial polyhedra and their generalized oriented volumes, and reformulate Theorem~\ref{theorem_main} in a more rigorous way (Theorem~\ref{theorem_main2}). Section~\ref{section_simplex} contains a survey of results on the analytic continuation of the oriented volume of a simplex in either~$\bS^n$ or~$\Lambda^n$ that is considered as a function of the coordinates of the vertices. The aim of Sections~\ref{section_ordering} and~\ref{section_KGkappa} is to construct certain simplicial complex $\CK(G,\varkappa)$ associated to a symmetric complex matrix~$G$ with units on the diagonal and a positive number~$\varkappa$, and to prove that if the rank of~$G$ does not exceed~$2r$, then for certain~$\varkappa$, the complex $\CK(G,\varkappa)$ collapses on a subcomplex of dimension less than~$r$. This assertion (Theorem~\ref{theorem_KCkappa}) is formulated and proved in Section~\ref{section_KGkappa}. Section~\ref{section_ordering} contains some preliminary definitions and results concerning collapses of simplicial complexes and special orderings of simplices of simplicial complexes, which we name \textit{hereditary orderings}. Theorem~\ref{theorem_KCkappa} is the main technical assertion in our proof of Theorem~\ref{theorem_main2}. However, it seems that it may be interesting in itself. The proof of Theorem~\ref{theorem_main2} is given in Section~\ref{section_proof}.

\section{Polyhedra and their volumes}\label{section_flex}

\subsection{Simplicial polyhedra}
We shall conveniently work with the most general concept of a simplicial polyhedron, which was called in~\cite{Gai11} a \textit{cycle polyhedron}. The idea of this definition is to mean under a closed $(n-1)$-dimensional polyhedral surface an arbitrary image of an $(n-1)$-dimensional simplicial cycle that is allowed to be degenerate or self-intersecting.
First, we need to introduce some background material concerning simplicial complexes.

Recall that an \textit{\textnormal{(}abstract\textnormal{)} simplicial complex\/} on a vertex set~$S$ is a set $K$ of subsets of~$S$ that satisfies the following conditions
\begin{itemize}
\item $\emptyset\in K$, 
\item if $\tau\in K$ and $\sigma\subseteq\tau$, then $\sigma\in K$.
\end{itemize}
We shall usually assume that the simplicial complex has no \textit{fictive vertices}, that is, all one-element subsets of~$S$ belong to~$K$.
By definition, $\dim\sigma$ is the cardinality of~$\sigma$ minus~$1$. 
The geometric realization of a simplicial complex $K$ will be denoted by~$|K|$. 

Let $C_*(K)$ be the simplicial chain complex of~$K$ with integral coefficients, and let $\partial$ be the differential of this complex. Recall that $C_k(K)$ is the free Abelian group generated by oriented $k$-dimensional simplices of~$K$. (Reversing the orientation of a simplex changes the sign of the corresponding generator of~$C_k(K)$.) We denote by $[u_0,\ldots,u_k]$ the oriented simplex with vertices $u_0,\ldots,u_k$ with the orientation given by the indicated order of vertices. Then 
$$
\partial [u_0,\ldots,u_k]=\sum_{j=0}^k(-1)^j[u_0,\ldots,\widehat{u}_j,\ldots,u_k],
$$
where hat denotes the omission of the corresponding letter. A chain $\xi\in C_k(K)$ is called a \textit{cycle} if $\partial\xi=0$. The group of $k$-dimensional cycles in~$K$ will be denoted by~$Z_k(K)$. The \textit{support\/} of a chain $\xi\in C_k(K)$ is the subcomplex $\supp\xi\subseteq K$ consisting of all $k$-simplices that enter~$\xi$ with non-zero coefficients, and all their subsimplices.

We denote by $C^*(K;A)$ the simplicial cochain complex of~$K$ with coefficients in an Abelian group~$A$, i.\,e., the complex consisting of the groups $C^k(K;A)=\Hom(C_k(K),A)$ with respect to the differential~$\delta$ adjoint to~$\partial$. A cochain~$f$ satisfying $\delta f=0$ is called a \textit{cocycle}. We denote by~$Z^k(K;A)\subseteq C^k(K;A)$ the subgroup consisting of all cocycles.

We denote by~$[m]$ the set $\{1,\ldots,m\}$. We denote by~$\Delta_{[m]}$ the simplex on the vertex set~$[m]$, i.\,e., the simplicial complex consisting of all subsets $\sigma\subseteq[m]$.

\begin{defin}
Suppose that $\xi\in Z_{n-1}(\Delta_{[m]})$. A mapping $P\colon |\supp\xi|\to\R^n$ is called a \textit{simplicial polyhedron\/} of combinatorial type~$\xi$ if the restriction of~$P$ to every simplex of~$|\supp\xi|$ is affine linear. 
\end{defin}

To give a similar definition of a polyhedron in~$\bS^n$ or in~$\Lambda^n$, we need to introduce the concept of a \textit{pseudo-linear\/} mapping. Recall the standard vector models for~$\bS^n$ and~$\Lambda^n$. For our further considerations, it will be convenient for us to place these two vector models into the same complex vector space~$\C^{n+1}$.

Consider the space $\C^{n+1}$ with the standard basis $\be_0,\ldots,\be_n$. Endow $\C^{n+1}$ with the standard bilinear scalar product
$$
\langle \bz,\bw\rangle=\bz^t\bw=z_0w_0+\cdots+z_nw_n,
$$ 
where  $\bz=(z_0,\ldots,z_n)^t$ and $\bw=(w_0,\ldots,w_n)^t$, and $t$ denotes the transpose of a matrix. 

Let $Q\subset \C^{n+1}$ be the quadric given by
$$
\langle \bz,\bz\rangle=1.
$$

We put,
\begin{gather*}
\R^{n+1}=\spn_{\R}(\be_0,\be_1,\ldots,\be_n),\qquad
\R^{1,n}=\spn_{\R}(\be_0,i\be_1,\ldots,i\be_n).
\end{gather*} 
Then the scalar product $\langle\cdot,\cdot\rangle$ restricted to either of the spaces~$\R^{n+1}$ and~$\R^{1,n}$ is real-valued, and $\R^{n+1}$ and~$\R^{1,n}$ with this scalar product are the $(n+1)$-dimensional Euclidean space and the pseudo-Euclidean space of signature $(1,n)$, respectively. The intersection $\bS^n=Q\cap\R^{n+1}$ is the standard unit $n$-dimensional sphere.  The intersection $Q\cap\R^{1,n}$ is the $n$-dimensional double-sheeted hyperboloid. The sheet of this hyperboloid contained in the half-space $x_0>0$ serves as the standard vector model for the $n$-dimensional Lobachevsky space~$\Lambda^n$. Further we shall always identify $\Lambda^n$ with this sheet of the hyperboloid $Q\cap\R^{1,n}$. We endow $\bS^n$ and $\Lambda^n$ with the orientations so that $\be_1,\ldots,\be_n$ and $i\be_1,\ldots,i\be_n$ are positively oriented bases of the tangent spaces~$T_{\be_0}\bS^n$ and~$T_{\be_0}\Lambda^n$, respectively.

If $\bz\in\C^{n+1}$ is a vector satisfying $\langle\bz,\bz\rangle>0$, then we put
\begin{equation*}
\mathop{\mathrm{norm}}(\bz)=\frac{\bz}{\sqrt{\langle\bz,\bz\rangle}}\,.
\end{equation*}

Let $\Delta^k$ be an affine simplex with vertices $v_0,\ldots,v_k$, and let $\X^n$ be either~$\bS^n$ or~$\Lambda^n$. A mapping $f\colon\Delta^k\to\X^n$ is called \textit{pseudo-linear\/} if
$$
f(\beta_0v_0+\cdots+\beta_kv_k)=\mathop{\mathrm{norm}}\bigl(\beta_0f(v_0)+\cdots+\beta_kf(v_k)\bigr)
$$
for all $\beta_0,\ldots,\beta_k\ge 0$ such that $\beta_0+\cdots+\beta_k=1$. Obviously, the restriction of a pseudo-linear mapping to a face of~$\Delta^k$ is also a pseudo-linear mapping.

\begin{remark}
Consider any points $\bx_0,\ldots,\bx_k\in \X^n$. In the spherical case we additionally assume that $\bx_p\ne -\bx_q$ for all~$p$ and~$q$. Then  for any $\beta_0,\ldots,\beta_k\ge 0$ such that $\beta_0+\cdots+\beta_k=1$, the vector $\bx=\beta_0\bx_0+\cdots+\beta_k\bx_k$ is proportional to a vector in~$\X^n$ with a positive coefficient. Hence the vector $\mathop{\mathrm{norm}}(\bx)$ belongs to~$\X^n$. Thus there exists a unique pseudo-linear mapping $f\colon\Delta^k\to\X^n$ such that $f(v_j)=\bx_j$ for all~$j$. 
\end{remark}

\begin{defin}
Let $\X^n$ be either~$\bS^n$ or~$\Lambda^n$. Suppose that $\xi\in Z_{n-1}(\Delta_{[m]})$. A mapping $P\colon |\supp\xi|\to\X^n$ is called a \textit{simplicial polyhedron\/} of combinatorial type~$\xi$ if the restriction of~$P$ to every simplex of~$|\supp\xi|$ is pseudo-linear. 
\end{defin}

If we take for $\xi$ the fundamental class of an oriented $(n-1)$-dimensional pseudo-manifold~$K$, we arrive to a more intuitive concept of a simplicial polyhedron: A simplicial polyhedron of combinatorial type~$K$ is a mapping $P\colon |K|\to\X^n$ such that the restriction of~$P$ to every simplex of~$K$ is affine linear (if $\X^n=\R^n$) or pseudo-linear (if $\X^n$ is either~$\bS^n$ or~$\Lambda^n$). If $P$ is an embedding, then this polyhedron is called \textit{embedded}. 

\begin{defin}\label{defin_flex}
Let $\X^n$ be one of the three spaces~$\R^n$, $\bS^n$, and~$\Lambda^n$. Suppose that $\xi\in Z_{n-1}(\Delta_{[m]})$, and $\supp\xi$ is connected. A one-parametric continuous family of mappings $P_t\colon|\supp\xi|\to\X^n$ is called a \textit{flexible polyhedron\/} of combinatorial type~$\xi$ if $P_t$ are simplicial polyhedra of combinatorial type~$\xi$ for all~$t$, and $P_{t_1}$ cannot be obtained from $P_{t_2}$ by an isometry of~$\X^n$ for all sufficiently close to each other $t_1\ne t_2$.
\end{defin}

The condition that $\supp\xi$ must be connected is very natural. Indeed, otherwise we would have two polyhedra that are allowed to move independently, which would not correspond to our intuitive concept of a flexible polyhedron. Usually, some additional conditions of non-degeneracy are imposed on a flexible polyhedron $P_t\colon|\supp\xi|\to\X^n$, cf.~\cite[Sect.~7]{Gai15b}. We do not want to impose any additional conditions, since our main result (Theorem~\ref{theorem_main}) remains true without them.

\subsection{Generalized oriented volume}
Consider a polyhedron $P\colon |\supp\xi|\to\X^n$ of combinatorial type~$\xi$, where $\xi\in Z_{n-1}(\Delta_{[m]})$, and put $K=\supp\xi$.

First, suppose that $\X^n$ is either~$\R^n$ or~$\Lambda^n$. For any point~$\bx\in \X^n\setminus P(|K|)$, we denote by~$\lambda_{\infty}(\bx)$ the algebraic intersection number of a curve going from~$\bx$ to infinity and the $(n-1)$-dimensional cycle $P(\xi)$. (Obviously, this algebraic intersection number is independent of the choice of the curve.) Then $\lambda_{\infty}(\bx)$ is 
an almost everywhere defined piecewise constant function on~$\X^n$ with compact support.
By definition, the \textit{generalized oriented volume\/} of the polyhedron~$P$ is given by
\begin{equation*}
\CV_{\xi}(P)=\int_{\X^n}\lambda_{\infty}(\bx)\,dV_{\X^n},
\end{equation*}
where $dV_{\X^n}$ is the standard volume element in~$\X^n$.

Second, suppose that $\X^n=\bS^n$. This case is somewhat more complicated, since there is no infinity in~$\bS^n$. One way to handle this difficulty is to consider the generalized oriented volume as an element of the group $\R/\sigma_n\Z$, where $\sigma_n$ is the volume of~$\bS^n$. Choose  a point~$\bx_0\in\bS^n\setminus P(|K|)$. For any point~$\bx\in \bS^n\setminus P(|K|)$, we denote by~$\lambda_{\bx_0}(\bx)$ the algebraic intersection number of a curve going from~$\bx$ to~$\bx_0$ and the $(n-1)$-dimensional cycle $P(\xi)$.  By definition, the \textit{generalized oriented volume\/} of the polyhedron~$P$ is given by
\begin{equation*}
\CV_{\xi}(P)=\int_{\bS^n}\lambda_{\bx_0}(\bx)\,dV_{\bS^n}\pmod{\sigma_n\Z}.
\end{equation*}
If we replace~$\bx_0$ with any other point~$\bx_0'\in \bS^n\setminus P(|K|)$, then we shall easily see that $\lambda_{\bx_0'}(\bx)=\lambda_{\bx_0}(\bx)+\lambda_{\bx_0'}(\bx_0)$ for all~$\bx$, hence, the value of the integral in the right-hand side will change by~$\lambda_{\bx_0'}(\bx_0)\sigma_n$, which belongs to~$\sigma_n\Z$, since $\lambda_{\bx_0'}(\bx_0)\in\Z$.

Nevertheless, it will be important for us that, for a spherical polyhedron of a sufficiently small diameter, one can define its generalized oriented volume so that it will be a well-defined real number. Namely, assume that $\diam P(|K|)<\pi/2$. Then we define the \textit{generalized oriented volume\/} of~$P$ by
\begin{equation*}
\CV_{\xi}(P)=\int_{\bS^n}\lambda_{-\by}(\bx)\,dV_{\bS^n},
\end{equation*}
where $\by$ is an arbitrary point in~$P(|K|)$. If $\by,\by'\in P(|K|)$, then the geodesic segment with endpoints~$-\by$ and~$-\by'$ does not intersect~$P(|K|)$, hence, $\lambda_{-\by'}(\bx)=\lambda_{-\by}(\bx)$ for all~$\bx$. Therefore the real number $\CV_{\xi}(P)$ is well defined.

Equivalently, the generalized oriented volume of~$P$ can be determined in the following way. For any points $\ba_0,\ldots,\ba_n\in \X^n$, we denote by $V_{\X^n}^{\orient}(\ba_0,\ldots,\ba_n)$ the oriented volume of the simplex in~$\X^n$ with vertices $\ba_0,\ldots,\ba_n$. In the spherical case, this oriented volume is  defined if and only if no two points~$\ba_j$ and~$\ba_k$ are antipodal to each other. 

 If $n\ge 2$, then a cycle $\xi\in  Z_{n-1}(\Delta_{[m]})$ is a boundary. 

\begin{lem}
Let $\X^n$ be one of the three spaces~$\R^n$, $\bS^n$, and~$\Lambda^n$, $n\ge 2$. If $\X^n=\bS^n$, then we assume that $\diam P(|\supp\xi|)<\pi/2$.
Consider a chain $\eta\in C_{n}(\Delta_{[m]})$ such that $\partial\eta=\xi$. Suppose that $\eta=\sum_{q=1}^Nc_q[u_{q,0},\ldots,u_{q,n}]$. Then 
\begin{equation}\label{eq_or_vol}
\CV_{\xi}(P)=\sum_{q=1}^N c_qV_{\X^n}^{\orient}(\ba_{u_{q,0}},\ldots,\ba_{u_{q,n}}),
\end{equation}
where $\ba_u=P(u)$, $u=1,\ldots,m$.
\end{lem}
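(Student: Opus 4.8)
The plan is to prove the formula by induction on the structure of the filling chain $\eta$, reducing everything to the following additivity property of the oriented volume of simplices: if $\ba_0,\dots,\ba_n,\ba_{n+1}\in\X^n$ are points (no two antipodal in the spherical case) and $[u_0,\dots,u_n]$ is replaced by a cone construction, then the oriented volumes of the $n{+}2$ facets of the $(n{+}1)$-simplex on $\ba_0,\dots,\ba_{n+1}$, taken with the induced orientations, sum to zero; equivalently, the function $V^{\orient}_{\X^n}$ extended to all of $C_n(\Delta_{[m]})$ by linearity via $V^{\orient}_{\X^n}(\eta)=\sum_q c_q V^{\orient}_{\X^n}(\ba_{u_{q,0}},\dots,\ba_{u_{q,n}})$ descends to a well-defined functional on $n$-boundaries, i.e. depends only on $\partial\eta$. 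Granting this, the right-hand side of~\eqref{eq_or_vol} depends only on $\xi=\partial\eta$ and not on the choice of $\eta$, so it suffices to verify the identity for one convenient $\eta$.

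First I would fix a vertex, say the vertex $1\in[m]$ (if it is not used by $\xi$, relabel; in any case $n\ge 2$ and $\supp\xi$ is nonempty). Writing the cone $\eta_0 = [1]*\xi$, i.e. $\eta_0=\sum_q (\pm)[1,u_{q,0},\dots,u_{q,n-1}]$ obtained by prepending the vertex $1$ to each simplex of $\xi$, one has $\partial\eta_0=\xi$ because $\xi$ is a cycle. So I only need to check
\begin{equation*}
\CV_\xi(P)=\sum_q (\pm) V^{\orient}_{\X^n}\bigl(\ba_1,\ba_{u_{q,0}},\dots,\ba_{u_{q,n-1}}\bigr),
\end{equation*}
and then appeal to the additivity property above to conclude the formula for arbitrary $\eta$. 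The right-hand side now has a direct geometric meaning: each term $V^{\orient}_{\X^n}(\ba_1,\ba_{u_{q,0}},\dots)$ is (up to sign) the signed volume of the geodesic cone over the facet $P([u_{q,0},\dots,u_{q,n-1}])$ with apex $\ba_1=P(1)$. Summing these signed cones over the cycle $P(\xi)$ precisely computes $\int_{\X^n}\lambda(\bx)\,dV$, where $\lambda$ is the algebraic intersection number used in the definition of $\CV_\xi$: each point $\bx\in\X^n$ is covered by the cone over a facet exactly when the geodesic segment from $\bx$ to $\ba_1$ (in the Euclidean/Lobachevsky case: the ray from $\bx$ away from $\ba_1$ to infinity; in the small-diameter spherical case: the segment from $\bx$ to $-\ba_1$, legitimate since $\diam P(|K|)<\pi/2$) crosses $P(\xi)$ through that facet, and the sign matches the intersection number. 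This is a standard ``cone decomposition of the volume form'' argument and I would carry it out by comparing the two piecewise-constant functions on $\X^n$ whose integrals give the two sides, showing they differ by a function supported on $P(|K|)$, hence of measure zero.

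The step I expect to be the main obstacle is the bookkeeping of orientations and the careful treatment of the degenerate locus: $P$ need not be an embedding, the simplices $P([u_{q,0},\dots,u_{q,n-1}])$ may be degenerate, pairs of vertices $\ba_j$ may coincide, and in the spherical case one must make sure that no two of the points $\ba_1,\ba_{u_{q,0}},\dots$ are antipodal so that $V^{\orient}_{\bS^n}$ is defined on every term — here the hypothesis $\diam P(|\supp\xi|)<\pi/2$ is exactly what is needed, since it forces all the points $\ba_u$ to lie in an open ball of radius $<\pi/2$, inside which no two points are antipodal and the geodesic cones are unambiguously defined. I would handle the degeneracies by a standard perturbation/continuity argument: both sides of~\eqref{eq_or_vol} are continuous (indeed real-analytic, by the results surveyed in Section~\ref{section_simplex}) in the positions $\ba_u$, and the identity on the open dense set of generic configurations extends by continuity to all admissible configurations. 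Once the single convenient choice $\eta_0=[1]*\xi$ is verified, independence of $\eta$ via the boundary-only dependence of $V^{\orient}_{\X^n}(\eta)$ finishes the proof.
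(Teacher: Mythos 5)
Your argument is correct in outline, but it distributes the work differently from the paper. The paper fixes the vertex positions $\ba_u$ once and for all, observes that the right-hand side of~\eqref{eq_or_vol} is linear in the chain $\eta$, and proves that the left-hand side $\CV_{\partial\eta}(P)$ is \emph{also} linear in $\eta$ (because the intersection-number functions add: $\lambda_{\by}=\lambda^{(1)}_{\by}+\lambda^{(2)}_{\by}$ when $\eta=\eta^{(1)}+\eta^{(2)}$); it then only has to check the identity when $\eta$ is a single $n$-simplex, where $\lambda_{\by}$ is $\pm1$ inside the image simplex and $0$ outside, so the check is immediate. You instead prove that the right-hand side depends only on $\partial\eta$, and then verify the identity for the one cone filling $\eta_0=[1]*\xi$ by a cone-decomposition of the volume. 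Both routes work, but note what each costs. Your first step rests on the alternating-sum relation for the $n{+}2$ facet volumes of a degenerate $(n{+}1)$-simplex, applied to arbitrary $(n{+}2)$-tuples of the $\ba_u$; this is classical in $\R^n$ and $\Lambda^n$, but in $\bS^n$ it is not free — it is exactly the real-points case of Lemma~\ref{lem_zero_sum} (via~\eqref{eq_orient_vol_F}), which the paper proves later by analytic continuation, and it is precisely here that the pairwise-distance bound $<\pi/2$ (hence the diameter hypothesis, extended to any auxiliary vertices your fillings use) is needed. Your second step (cone volumes with apex $\ba_1$ reproduce $\int\lambda\,dV$, with $-\ba_1$ as base point in the spherical case) is sound, and your plan to handle degenerate simplices and self-intersections by continuity is workable, though continuity of $\CV_\xi$ in the vertex positions itself needs a short swept-volume estimate; also beware that cone simplices $[1,u_{q,0},\ldots,u_{q,n-1}]$ with $1\in\{u_{q,j}\}$ must be discarded (they contribute zero), which is the standard convention making $\partial\eta_0=\xi$. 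The paper's linearity-in-$\eta$ trick buys you exemption from both of these costs: no facet-sum identity and no cone geometry are needed, since agreement of two linear functionals on single simplices already gives agreement on all chains.
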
  

\begin{proof}
Recall that a pseudo-linear (or affine linear) mapping $P\colon|\supp\xi|\to\X^n$ can be uniquely restored from the images of the vertices. Let us fix a mapping $P\colon[m]\to\X^n$, and put $\ba_u=P(u)$, $u=1,\ldots,m$. In the case $\X^n=\bS^n$, this mapping should additionally satisfy $\dist_{\bS^n}(\ba_u,\ba_v)<\pi/2$ for all $u,v\in[m]$. Then both sides of~\eqref{eq_or_vol} can be considered as functions of a chain $\eta\in C_{n}(\Delta_{[m]})$. Obviously, the right-hand side of~\eqref{eq_or_vol} is a linear funcion of~$\eta$. Let us prove that the left-hand side of~\eqref{eq_or_vol}, i.\,e., $\CV_{\partial\eta}(P)$ is also a linear function of~$\eta$.  Assume that $\eta=\eta^{(1)}+\eta^{(2)}$. We put $\by=\infty$ if $\X^n$ is either~$\R^n$ or~$\Lambda^n$, and $\by=-\ba_1$ if $\X^n=\bS^n$. Let $\lambda_{\by}(\bx)$, $\lambda_{\by}^{(1)}(\bx)$, and $\lambda_{\by}^{(2)}(\bx)$ be the algebraic intersection numbers of a curve from~$\bx$ to~$\by$ with the cycles~$P(\partial\eta)$, $P(\partial\eta^{(1)})$, and $P(\partial\eta^{(2)})$, respectively. Then $\lambda_{\by}(\bx)=\lambda_{\by}^{(1)}(\bx)+\lambda_{\by}^{(2)}(\bx)$ for all~$\bx$, which immediately implies that $\CV_{\partial\eta}(P)=\CV_{\partial\eta^{(1)}}(P)+\CV_{\partial\eta^{(2)}}(P)$. Thus $\CV_{\partial\eta}(P)$ is a linear function of~$\eta$. 

Therefore it is sufficient to prove~\eqref{eq_or_vol} for $\eta=[u_0,\ldots,u_n]$. Then $P(\partial\eta)$ is the oriented boundary of the simplex $[\ba_{u_0},\ldots,\ba_{u_n}]$. If this simplex is degenerate, then both sides of~\eqref{eq_or_vol} vanish. Assume that the simplex $[\ba_{u_0},\ldots,\ba_{u_n}]$ is non-degenerate, and put $\varepsilon=1$ if this simplex is positively oriented and $\varepsilon=-1$ if this simplex is negatively oriented. Then $\lambda_{\by}(\bx)=\varepsilon$ for $\bx$ inside the simplex $[\ba_{u_0},\ldots,\ba_{u_n}]$, and $\lambda_{\by}(\bx)=0$ for $\bx$ outside the simplex $[\ba_{u_0},\ldots,\ba_{u_n}]$. Hence, 
$$
\CV_{\partial\eta}(P)=\varepsilon V_{\X^n}(\ba_{u_0},\ldots,\ba_{u_n})=V^{\orient}_{\X^n}(\ba_{u_0},\ldots,\ba_{u_n}),
$$
which is exactly~\eqref{eq_or_vol}.
\end{proof}

\subsection{Configuration spaces of polyhedra}
It is well known that the distances in $\bS^n$ and~$\Lambda^n$ satisfy
\begin{align*}
\cos\dist_{\bS^n}(\bx,\by)&=\langle \bx,\by\rangle,&\bx,\by&\in \bS^n,\\
\cosh\dist_{\Lambda^n}(\bx,\by)&=\langle \bx,\by\rangle,&\bx,\by&\in\Lambda^n.
\end{align*}

Suppose that $\xi\in Z_{n-1}(\Delta_{[m]})$,  and $K=\supp\xi$ is connected.  A polyhedron $P\colon K\to\X^n$ of combinatorial type~$\xi$ is determined solely by the set of its vertices $P(u)=\ba_u$, $u=1,\ldots,m$. We shall always identify a polyhedron~$P$ with the corresponding point $A=(\ba_1,\ldots,\ba_m)\in(\X^n)^m$.

Let $s$ be the number of edges of~$K$. Let $\bell=(\ell_{uv})$, $\{u,v\}\in K$, be the prescribed set of edge lengths of~$P$. Then the configuration space of all polyhedra~$P$ in~$\X^n$ of combinatorial type~$\xi$ with the prescribed set of edge lengths~$\bell$ is the subset $\Sigma(\xi,\X^n,\bell)\subset(\X^n)^m$ consisting of all points $A=(\ba_1,\ldots,\ba_m)$, $\ba_u\in\X^n$, such that  $\dist_{\X^n}(\ba_u,\ba_v)=\ell_{uv}$ for all $\{u,v\}\in K$. In other words, the subset $\Sigma(\xi,\X^n,\bell)\subset(\X^n)^m$ is given by~$s$ polynomial equations 
\begin{equation}\label{eq_polynom}
\langle\ba_u,\ba_v\rangle=\left\{
\begin{aligned}
&\cos\ell_{uv}&&\text{if}\ \ \X^n=\bS^n,\\
&\cosh\ell_{uv}&&\text{if}\ \ \X^n=\Lambda^n,
\end{aligned}
\right.\qquad\qquad \{u,v\}\in K.
\end{equation}

If  $\X^n=\Lambda^n$, then the generalized oriented volume $\CV_{\xi}(P)$ is well defined for any polyhedron $P\colon|K|\to\Lambda^n$. Identifying $P$ with the corresponding point~$A\in\Sigma(\xi,\Lambda^n,\bell)$, we obtain a well-defined function~$\CV_{\xi}(A)$ on~$\Sigma(\xi,\Lambda^n,\bell)$.
  
If $\X^n=\bS^n$, then  the generalized oriented volume $\CV_{\xi}(P)$ is well defined whenever $\diam P(|K|)<\pi/2$. Assume that  $\ell_{uv}<\pi/(2(m-1))$ for all $\{u,v\}\in K$. Then the inequality $\diam P(|K|)<\pi/2$ holds for all polyhedra $P\colon|K|\to\bS^n$ with the set of edge lengths~$\bell$. Hence  $\CV_{\xi}(A)$ is a well-defined function on~$\Sigma(\xi,\bS^n,\bell)$.
  
A flexible polyhedron $P_t\colon |K|\to\X^n$ of combinatorial type~$\xi$ with the set of edge lengths~$\bell$ is nothing but a continuous path in~$\Sigma(\xi,\X^n,\bell)$. Hence the following theorem is a  reformulation of Theorem~\ref{theorem_main}.

\begin{theorem}\label{theorem_main2}
Suppose that $n\ge 3$ and $\X^n$ is either~$\bS^n$ or~$\Lambda^n$. Suppose that
$\xi\in Z_{n-1}(\Delta_{[m]})$ and $K=\supp\xi$ is connected. Let $\bell=(\ell_{uv})_{\{u,v\}\in K}$ be the prescribed set of edge lengths such that $0\le \ell_{uv}<2^{-m^2(n+4)}$ for all $\{u,v\}\in K$. Then the function~$\CV_{\xi}(A)$ is constant on every connected component of\/~$\Sigma(\xi,\X^n,\bell)$.
\end{theorem}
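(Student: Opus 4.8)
The plan is to combine the analytic-continuation strategy of \cite{Gai15b} with a quantitative version of the combinatorial collapsing argument of \cite{Gai12}. Fix $\xi$, $\X^n$ and $\bell$ as in the statement, and consider the real-algebraic variety $\Sigma=\Sigma(\xi,\X^n,\bell)\subset(\X^n)^m$ cut out by the equations~\eqref{eq_polynom}. By the survey of Section~\ref{section_simplex}, the oriented volume $V^{\orient}_{\X^n}(\ba_0,\ldots,\ba_n)$ of a non-degenerate simplex extends to a multivalued analytic function of the Gram matrix of the vertices; hence, via formula~\eqref{eq_or_vol}, $\CV_\xi$ extends to a multivalued analytic function on (a neighbourhood in) the complexification $\Sigma_\C\subset Q^m\subset(\C^{n+1})^m$ of $\Sigma$, where $Q^m$ is cut out by the same polynomial equations over $\C$. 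What we must show is that along any loop in a connected component of the real locus $\Sigma$ this analytic continuation returns to its original branch — equivalently, that the monodromy of $\CV_\xi$ is trivial — which, together with the local constancy that follows from the Schl\"afli-type differential formula, forces $\CV_\xi$ to be constant on each connected component.

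The first main step is to reduce the monodromy question to a statement about the Gram matrix. A point $A=(\ba_1,\ldots,\ba_m)\in\Sigma_\C$ determines the symmetric complex matrix $G=G(A)=\bigl(\langle\ba_u,\ba_v\rangle\bigr)_{u,v\in[m]}$; it has units on the diagonal (after the appropriate normalisation in the spherical vs.\ Lobachevsky case, the diagonal entries are $1$), the off-diagonal entries at edges of $K$ are the prescribed constants $\cos\ell_{uv}$ or $\cosh\ell_{uv}$, and $\rank G\le n+1$ since the $\ba_u$ lie in $\C^{n+1}$. Because the volume of each simplex $[\ba_{u_{q,0}},\ldots,\ba_{u_{q,n}}]$ depends only on the corresponding $(n+1)\times(n+1)$ principal minor of $G$, the multivalued function $\CV_\xi$ factors through $A\mapsto G(A)$. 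So it suffices to analyse the analytic continuation of $\CV_\xi$, viewed as a function of $G$, along loops in the space of symmetric matrices with units on the diagonal, with the prescribed edge entries fixed, and with rank $\le n+1$.

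The heart of the argument is then Theorem~\ref{theorem_KCkappa}: for the matrix $G$ at hand, whose rank is at most $n+1\le 2r$ with $r=\lceil(n+1)/2\rceil$, and for a suitable threshold $\varkappa$ governed by the smallness assumption $\ell_{uv}<2^{-m^2(n+4)}$ on the edge lengths, the simplicial complex $\CK(G,\varkappa)$ collapses onto a subcomplex of dimension $<r$. Concretely I would use this collapse to run an induction on the simplices of $\CK(G,\varkappa)$ in a hereditary ordering (Section~\ref{section_ordering}): at each elementary collapse of a free pair of simplices, the corresponding volume monomials in the expansion~\eqref{eq_or_vol} can be rewritten, using the Cayley--Menger / Gram relations inherent in the bound $\rank G\le n+1$ together with the analytic identities for simplex volumes from Section~\ref{section_simplex}, in terms of volumes of simplices living on the smaller subcomplex, \emph{in a single-valued analytic way} on the relevant region — the smallness of $\varkappa$ being exactly what keeps all the normalisations $\mathop{\mathrm{norm}}(\cdot)$ and all the branch choices inside a region on which the continuation is unambiguous. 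Iterating down to a subcomplex of dimension $<r\le n$, we reach dimensions in which $Z_{n-1}=0$ over that subcomplex, so the volume expression becomes identically zero's worth of ambiguity, i.e.\ genuinely single-valued; unwinding the induction shows $\CV_\xi$ itself has trivial monodromy over each connected component of $\Sigma$. Combined with the Schl\"afli differential formula, which shows $d\CV_\xi=0$ on $\Sigma$ (the edge lengths being fixed), single-valuedness upgrades to constancy, proving Theorem~\ref{theorem_main2}.

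The step I expect to be the main obstacle is the last one: showing that the elementary collapses of $\CK(G,\varkappa)$ can actually be matched with single-valued analytic reductions of the volume polynomial, with \emph{quantitative} control. It is not enough that each simplex volume is \emph{some} analytic function of the Gram data; one needs that throughout the continuation along an arbitrary loop in $\Sigma$, the arguments of all the transcendental functions ($\arcosh$, $\Arcsin$, logarithms) involved in the simplex-volume formulas stay within a branch-cut-free region, and this is precisely where the explicit double-exponentially small bound $2^{-m^2(n+4)}$ must be spent — tracking how errors accumulate over the up to $2^{m}$-or-so simplices and over the depth-$m$ induction. Making the numerology honest, and in particular checking that the threshold $\varkappa$ supplied by Theorem~\ref{theorem_KCkappa} is compatible with the region of validity of the analytic identities of Section~\ref{section_simplex}, is the delicate part; the combinatorial collapsing itself is a black box once Theorem~\ref{theorem_KCkappa} is in hand.
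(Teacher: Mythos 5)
The closing step of your argument is not valid, and it is the step that carries all the weight. You assert that the Schl\"afli differential formula gives $d\CV_{\xi}=0$ on $\Sigma$ because the edge lengths are fixed. In curvature $\pm1$ the Schl\"afli formula expresses $dV$ through the volumes of the codimension-two faces multiplied by the differentials of the dihedral angles, and during a flexion the dihedral angles vary, so fixing the edge lengths does not make $dV$ vanish. Indeed, if constancy followed this way it would hold with no smallness assumption, contradicting the counterexamples to the bellows conjecture in $\bS^n_+$ (Alexandrov, and the author's higher-dimensional examples) cited in the introduction. Moreover, even granting your single-valuedness claim, triviality of monodromy by itself yields nothing: a single-valued holomorphic function on the complexification can perfectly well be non-constant along the real locus. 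The mechanism the paper actually uses is \emph{boundedness} of the extension together with the Liouville-type theorem that a bounded holomorphic function on an irreducible affine variety is constant (Corollary~\ref{cor_bhf}); the boundedness comes from the explicit estimate~\eqref{eq_est_F} for $F$ on~$\Omega$ and is entirely absent from your plan.

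Your use of Theorem~\ref{theorem_KCkappa} also misses how the collapse enters. It is not matched with step-by-step ``rewritings of volume monomials at each elementary collapse''; it is used purely homologically. For each $A\in\Sigma_{\C}$, Corollary~\ref{cor_KCkappa} supplies $\varkappa\in\bigl(2^{-m^2(n+4)},1\bigr)$ such that $\CK(G(A),\varkappa)$ collapses onto a subcomplex of dimension $<r$ with $r=[n/2]+1\le n-1$ (this is exactly where $n\ge3$ is needed; your inequality ``$<r\le n$'' is not the relevant one). Hence $H_{n-1}=H_n=0$ for this complex, so $\xi=\partial\eta$ for a chain $\eta$ supported on simplices all of whose pairwise Gram entries satisfy $|g_{uv}-1|<1$; then formula~\eqref{eq_or_vol} with the single-valued holomorphic $F$ on $\Omega$ defines a local holomorphic extension of $\CV_{\xi}$, Lemma~\ref{lem_zero_sum} (the cocycle relation) together with $H_n=0$ makes it independent of the choice of $\eta$, and the nestedness of the complexes $\CK(G(A),\varkappa)$ on overlaps glues these local functions into one global bounded holomorphic $\Phi$ on $\Sigma_{\C}$ (Proposition~\ref{propos_Phi}). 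The bound $2^{-m^2(n+4)}$ is spent in the pigeonhole argument of Corollary~\ref{cor_KCkappa} and in guaranteeing $K\subseteq\CK(G(A),\varkappa)$, not in tracking branch cuts of $\arcosh$ along loops. A further flaw in your reduction: the oriented simplex volume does not factor single-valuedly through the Gram matrix, since its sign involves $\det A$ while $G$ determines only $(\det A)^2$; this is why the paper works with $F$ as a function on $Q^{n+1}$ rather than of $G$ alone.
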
  

\begin{remark}
It is easy to see that $2^{-m^2(n+4)}<\pi/(2(m-1))$ whenever $m\ge 2$. Therefore under the hypothesis of Theorem~\ref{theorem_main2} the function $\CV_{\xi}(A)$ is well defined on~$\Sigma(\xi,\X^n,\bell)$.  
\end{remark}
  
\section{Analytic continuation of the volume of a simplex} \label{section_simplex}

Let $\X^n$ be either $\bS^n$ or~$\Lambda^n$. To treat both cases together, we shall conveniently introduce a parameter~$\nu$ such that $\nu=1$ if $\X^n=\bS^n$ and $\nu=i$ if $\X^n=\Lambda^n$.

Let $\Delta^n$ be a simplex in~$\X^n$ with vertices $\ba_0,\ldots,\ba_n$, $\ba_j=(a_{0j},\ldots, a_{nj})^t$. Then $A=(a_{jk})$ is the matrix with columns $\ba_0,\ldots,\ba_n$. Let $G=(g_{jk})$ be the Gram matrix of the vectors $\ba_0,\ldots,\ba_n$. Then $G=A^tA$,  $g_{jj}=1$, $j=0,\ldots,n$, and $g_{jk}$ are the cosines of the edge lengths of~$\Delta^n$ if $\X^n=\bS^n$ and the hyperbolic cosines of the edge lengths of~$\Delta^n$ if $\X^n=\Lambda^n$. Let $K(\Delta^n)\subset \C^{n+1}$ be the convex cone generated by the vectors~$\ba_0,\ldots,\ba_n$, that is, the set consisting of all linear combinations $t_0\ba_0+\cdots+t_n\ba_n$, where $t_0,\ldots,t_n\ge 0$. It is well known that the volume of the simplex~$\Delta^n$ can be computed in the following way, see \cite{Cox35}, \cite{Aom77}, \cite{AVS88}:
\begin{multline*}
V_{\X^n}(\Delta^n)=\left(\int\limits_0^{+\infty}t^ne^{-t^2}\right)^{-1}\int\limits_{K(\Delta^n)}e^{-\langle\bx,\bx\rangle}d\bx\\
{}=\frac{2\sqrt{|\det G|}}{\Gamma\left(\frac{n+1}{2}\right)}\int\limits_{t_0,\ldots,t_n\ge 0}\exp\left(-\sum_{j,k=0}^ng_{jk}t_jt_k\right)dt_0\cdots dt_n,
\end{multline*}
where $d\bx$ is the standard volume element in~$\R^{n+1}$ (if $\X^n=\bS^n$) or in~$\R^{1,n}$ (if $\X^n=\Lambda^n$).

We have $\det G=(\det A)^2$. It is easy to see that, for any simplex~$\Delta^n$ in~$\X^n$, the number $\nu^{-n}\det A$ is real, $\nu^{-n}\det A>0$  if $\Delta^n$ is positively oriented, and  $\nu^{-n}\det A<0$ if $\Delta^n$ is negatively oriented. Therefore the oriented volume of~$\Delta^n$ is given by
\begin{equation}\label{eq_Vor}
V_{\X^n}^{\orient}(\Delta^n)=\frac{2\det A}{\nu^n\,\Gamma\left(\frac{n+1}{2}\right)}\int\limits_{t_0,\ldots,t_n\ge 0}\exp\left(-\sum_{j,k=0}^n\langle\ba_j,\ba_k\rangle t_jt_k\right)dt_0\cdots dt_n.
\end{equation}

Let $\Psi\subset Q^{n+1}\subset\C^{(n+1)^2}$ be the subset consisting of all points $A=(\ba_0,\ldots,\ba_n)$, $\ba_j\in Q$, such that
$$
\Re\sum_{j,k=0}^n\langle\ba_j,\ba_k\rangle t_jt_k>0
$$
for all $t_0,\ldots,t_n\ge 0$ provided that at least one of the numbers~$t_j$ is strictly positive. Obviously, $\Psi$ is a relatively open subset of $Q^{n+1}$. It is easy to see that the function
\begin{equation}\label{eq_F(A)}
F(A)=\det A\int\limits_{t_0,\ldots,t_n\ge 0}\exp\left(-\sum_{j,k=0}^n\langle\ba_j,\ba_k\rangle t_jt_k\right)dt_0\cdots dt_n
\end{equation}
is well defined and holomorphic on~$\Psi$.

Obviously, the set~$\Psi$ contains all points $A=(\ba_0,\ldots,\ba_n)\in Q^{n+1}$ such that their Gram matrices $G=A^tA$ have positive definite real parts. On the other hand,  $\Psi$ contains all points $(\ba_0,\ldots,\ba_n)\in Q^{n+1}$ such that $\Re\langle\ba_j,\ba_k\rangle>0$ for all~$j$ and~$k$. In particular, it follows that $\Psi$ contains all points $(\ba_0,\ldots,\ba_n)$ such that $\ba_j\in\Lambda^n$ for all~$j$ and contains all points $(\ba_0,\ldots,\ba_n)$ such that $\ba_j\in\bS^n$ for all~$j$ and no two points~$\ba_j$ and~$\ba_k$ are antipodal to each other. Formula~\eqref{eq_Vor} yields that 
\begin{equation}\label{eq_orient_vol_F}
V_{\X^n}^{\orient}(\ba_0,\ldots,\ba_n)=\frac{2}{\nu^n\,\Gamma\left(\frac{n+1}{2}\right)}\,F(\ba_0,\ldots,\ba_n)
\end{equation}
whenever $\ba_0,\ldots,\ba_n\in\X^n$.

We do not know how to describe the geometry of the set~$\Psi$ effectively. So we shall conveniently restrict ourselves to a smaller subset, which is much more handable.  Namely, let $\Omega\subset Q^{n+1}\subset\C^{(n+1)^2}$ be the subset consisting of all points $(\ba_0,\ldots,\ba_n)$, $\ba_j\in Q$, such that
$$
|\langle\ba_j,\ba_k\rangle-1|<1
$$
for all $j$ and~$k$. Obviously, $\Omega$ is a relatively open subset of $Q^{n+1}$ and $\Omega\subset\Psi$. Hence the function~$F$ is holomorphic on~$\Omega$. Besides, the function~$F$ restricted to~$\Omega$ is bounded. Indeed, for all $A\in \Omega$, we have,
\begin{multline}\label{eq_est_F}
|F(A)|< \sqrt{|\mathbf{g}_0|\cdots|\mathbf{g}_n|}\int\limits_{t_0,\ldots,t_n\ge 0}\exp\left(-\sum_{j=0}^nt_j^2\right)dt_0\cdots dt_n\\
{}<\left(2\sqrt{n+1}\right)^{\frac{n+1}2}\left(\frac{\sqrt{\pi}}{2}\right)^{n+1}=
\left(\frac{\pi^2(n+1)}{4}\right)^{\frac{n+1}4},
\end{multline} 
where $\mathbf{g}_0,\ldots,\mathbf{g}_n$ are the columns of the matrix~$G=A^tA$, and  $|\mathbf{g}|=\sqrt{\overline{\mathbf{g}}^t\mathbf{g}}$ is the Hermitian length of a vector $\mathbf{g}\in\C^{n+1}$.

\begin{lem}\label{lem_zero_sum}
Suppose that $\ba_0,\ldots,\ba_{n+1}$ are points in~$Q$ such that\/ $|\langle\ba_j,\ba_k\rangle-1|<1$ for all\/~$j$ and~$k$. Then 
\begin{equation}\label{eq_stand_relation}
\sum_{j=0}^{n+1}(-1)^jF(\ba_0,\ldots,\hat\ba_j,\ldots,\ba_{n+1})=0.
\end{equation}
\end{lem}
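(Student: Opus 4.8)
The plan is to prove \eqref{eq_stand_relation} by exhibiting the alternating sum as a single absolutely convergent integral whose integrand vanishes identically. Write $n+2$ vectors $\ba_0,\dots,\ba_{n+1}\in Q\subset\C^{n+1}$ with $|\langle\ba_j,\ba_k\rangle-1|<1$; in particular every $(n+1)$-subset lies in $\Omega$, so each term $F(\ba_0,\dots,\hat\ba_j,\dots,\ba_{n+1})$ is defined by the convergent integral \eqref{eq_F(A)}. The key observation is the classical fact, used already in \cite{Aom77}, that the integral representation \eqref{eq_F(A)} of $F$ over the positive octant can be rewritten, after introducing the determinant as an integral over a simplex, as an integral of $\exp(-\langle\bx,\bx\rangle)$ over the cone $K(\Delta^n)$ spanned by the given vertices, as displayed in the first line of the volume formula in Section~\ref{section_simplex}. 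Concretely, for $n+1$ vectors $\bc_0,\dots,\bc_n$ with Gram matrix having positive-definite real part,
\begin{equation*}
\gamma_n\, F(\bc_0,\dots,\bc_n)=\int_{K(\bc_0,\dots,\bc_n)}e^{-\langle\bx,\bx\rangle}\,d\bx,\qquad \gamma_n=\Bigl(\int_0^{+\infty}t^n e^{-t^2}dt\Bigr)^{-1},
\end{equation*}
where $K(\bc_0,\dots,\bc_n)=\{\sum t_j\bc_j: t_j\ge 0\}$ and $d\bx$ is Lebesgue measure on the real span (which, by the hypothesis, is a genuine real $(n+1)$-dimensional subspace of $\C^{n+1}$ with the restricted scalar product positive definite). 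I would first record this identity carefully, checking that the change of variables $\bx=\sum t_j\bc_j$ has Jacobian $|\det(\bc_0,\dots,\bc_n)|$ and that signs combine to give exactly $\det A$ times the octant integral.

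Granting the cone representation, the proof reduces to a purely combinatorial-measure-theoretic statement about cones. Fix the point $O=\ba_0+\dots+\ba_{n+1}$ (or any point in the interior of the cone $K(\ba_0,\dots,\ba_{n+1})$ on all $n+2$ vectors); the hypothesis $|\langle\ba_j,\ba_k\rangle-1|<1$ guarantees that the real affine span of $\ba_0,\dots,\ba_{n+1}$ is still a subspace on which $\langle\cdot,\cdot\rangle$ is positive definite, so we may work in a Euclidean space $E$ of dimension $\le n+1$. The cones $C_j=K(\ba_0,\dots,\hat\ba_j,\dots,\ba_{n+1})$ are the $n+2$ facet cones of the full cone $C=K(\ba_0,\dots,\ba_{n+1})$, and the standard chain identity $\sum_{j}(-1)^j[\text{cone on }\ba_0,\dots,\hat\ba_j,\dots,\ba_{n+1}]=\partial(\text{something})$ translates, after applying $\int_{(\cdot)}e^{-\langle\bx,\bx\rangle}d\bx$, into the assertion that $\sum_j(-1)^j\chi_{C_j}=0$ almost everywhere with respect to the appropriate orientations. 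The cleanest way to see this is: the vectors $\ba_0,\dots,\ba_{n+1}$ satisfy a unique (up to scale) linear relation $\sum_j\lambda_j\ba_j=0$ if they span an $(n+1)$-dimensional space, and for a generic $\bx$ in $E$ the signed count of cones $C_j$ containing $\bx$, weighted by $(-1)^j$, is zero — this is precisely the Radon-type identity that the boundary of a simplex (here the abstract simplex on $n+2$ vertices mapped into $E$) is a cycle with zero image under the volume-of-cone functional. If the vectors span only an $r$-dimensional space with $r\le n$, then every $\det(\ba_0,\dots,\hat\ba_j,\dots,\ba_{n+1})$ vanishes, hence every $F$ in \eqref{eq_stand_relation} is zero and there is nothing to prove; so I may assume full rank.

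Assembling: under the full-rank assumption, $\gamma_n\sum_j(-1)^jF(\ba_0,\dots,\hat\ba_j,\dots,\ba_{n+1})=\int_E\bigl(\sum_j(-1)^j\varepsilon_j\chi_{C_j}(\bx)\bigr)e^{-\langle\bx,\bx\rangle}d\bx$, where $\varepsilon_j$ is the sign relating the orientation of $\det A$ to that induced on the facet cone, and the inner sum is identically $0$ because it is the image of $\partial[\,0,1,\dots,n+1\,]$ (a boundary, hence a cycle bounding, so mapping to the zero function under $\bx\mapsto$ (signed indicator of its image cone)). I expect the main obstacle to be the bookkeeping of signs and orientations: one must verify that the alternating signs $(-1)^j$ in \eqref{eq_stand_relation}, which come from the simplicial boundary operator $\partial$, match exactly the orientation signs produced when one expresses each $F(\ba_0,\dots,\hat\ba_j,\dots,\ba_{n+1})$ via the cone integral with its natural orientation, so that the cancellation is genuine rather than merely up to sign. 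This is a finite, deterministic check, but it is where errors would creep in; everything else (convergence, the change of variables, the Radon identity) is routine once the cone representation is in hand.
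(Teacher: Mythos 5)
There is a genuine gap, and it sits at the very first reduction. The lemma concerns arbitrary \emph{complex} points $\ba_0,\ldots,\ba_{n+1}\in Q\subset\C^{n+1}$ with $|\langle\ba_j,\ba_k\rangle-1|<1$; your argument silently assumes the configuration is real. Your claim that the hypothesis ``guarantees that the real affine span of $\ba_0,\ldots,\ba_{n+1}$ is a subspace on which $\langle\cdot,\cdot\rangle$ is positive definite'' is false: already for $\ba_0=\be_0$ and $\ba_1=(\cos\theta,\sin\theta,0,\ldots,0)$ with $\theta$ small and non-real one has $\langle\ba_0,\ba_1\rangle=\cos\theta\notin\R$, so the bilinear form on $\spn_\R(\ba_0,\ba_1)$ is not even real-valued, and there is no Euclidean space $E$ in which your cones $C_j$ live. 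The condition $|g_{jk}-1|<1$ only gives $\Re\sum g_{jk}t_jt_k>0$ on the octant (which is why \eqref{eq_F(A)} converges); it does not make the Gram matrix real or its real part positive definite. Consequently the cone representation itself breaks down in the complex setting: the first displayed formula of Section~\ref{section_simplex} is a statement about genuine simplices in $\bS^n$ or $\Lambda^n$, and its complex analogue would have to carry the complex number $\det A$ (not $|\det A|$ or a choice of orientation sign), so ``Lebesgue measure on the real span'' and ``signed indicator of the image cone'' are not available notions. The Radon-type cancellation of facet-cone indicators is a theorem of real convex geometry; for a genuinely complex Gram matrix there are no cones to cancel.

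What your argument does prove (modulo the sign bookkeeping you flag) is the identity on the \emph{real} locus, i.e.\ for $\ba_0,\ldots,\ba_{n+1}$ all in $\Lambda^n$ (or all in $\bS^n$ and pairwise close), where it is just the statement that the oriented volumes of the $n$-faces of a degenerate $(n+1)$-simplex sum to zero. That is exactly the starting point of the paper's proof, but the paper then needs two further ingredients that are absent from your proposal and cannot be skipped: (a) the left-hand side $H$ of \eqref{eq_stand_relation} is holomorphic on the open set $\tOmega\subset Q^{n+2}$, so vanishing on the real locus propagates, via a biholomorphic chart taking $(\Lambda^n)^{n+2}$ to a totally real slice and the identity principle, to a whole neighborhood in $\tOmega$; and (b) $\tOmega$ is pathwise connected (Lemma~\ref{lem_connect}, proved by an explicit path contracting a configuration to a diagonal point while keeping all $|g_{jk}-1|<1$), so the vanishing extends from that neighborhood to every point satisfying the hypothesis. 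Without this analytic-continuation step your proof covers only real configurations, which is strictly weaker than the lemma and insufficient for its use in Section~\ref{section_proof}, where $F$ is evaluated at arbitrary points of the complexified configuration space $\Sigma_{\C}$. If you want to keep your geometric computation, the fix is to use it as the real-locus input and then supply (a) and (b); the sign bookkeeping you worry about is then the least of the difficulties.
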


To prove this assertion, we need the following auxiliary lemma.

\begin{lem}\label{lem_connect}
Suppose $\tOmega$ is the subset of~$Q^{n+2}$ consisting of all points $(\ba_0,\ldots,\ba_{n+1}),$ $\ba_j\in Q,$ such that
$|\langle\ba_j,\ba_k\rangle-1|<1$
for all $j$ and~$k$. Then~$\tOmega$ is pathwise connected. 
\end{lem}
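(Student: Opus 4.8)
The plan is to show that the set $\tOmega\subset Q^{n+2}$ defined by the Hermitian-type conditions $|\langle\ba_j,\ba_k\rangle-1|<1$ is pathwise connected by exhibiting an explicit deformation retraction (or at least a path-connecting procedure) onto a single point, say the ``standard'' configuration $\ba_0=\cdots=\ba_{n+1}=\be_0$, which clearly lies in $\tOmega$ since $\langle\be_0,\be_0\rangle=1$. Given an arbitrary point $A=(\ba_0,\ldots,\ba_{n+1})\in\tOmega$, I would first try the naive straight-line homotopy $\ba_j(s)=(1-s)\ba_j+s\be_0$; the obstruction is that this generally leaves the quadric $Q$, so one must renormalize via $\mathop{\mathrm{norm}}$. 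Thus the candidate path is
\[
\ba_j(s)=\mathop{\mathrm{norm}}\bigl((1-s)\ba_j+s\be_0\bigr),\qquad s\in[0,1].
\]
For this to be well defined and to stay inside $\tOmega$ one needs two things: that $\langle(1-s)\ba_j+s\be_0,(1-s)\ba_j+s\be_0\rangle$ is nonzero (indeed has positive real part, so that $\sqrt{\ }$ can be chosen continuously with value $1$ at $s=0$ and $s=1$), and that after normalization the pairwise conditions $|\langle\ba_j(s),\ba_k(s)\rangle-1|<1$ persist for all $s$.

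The key computation is to expand $\langle(1-s)\ba_j+s\be_0,(1-s)\ba_k+s\be_0\rangle$. Writing $\langle\ba_j,\be_0\rangle=a_{0j}$ for the zeroth coordinate, note that the hypothesis $|\langle\ba_j,\ba_k\rangle-1|<1$ does \emph{not} directly control $a_{0j}$, so a cleaner route is probably needed. My preferred approach: instead of retracting to $\be_0$, retract each $\ba_j$ toward $\ba_0$, i.e. use
\[
\ba_j(s)=\mathop{\mathrm{norm}}\bigl((1-s)\ba_j+s\ba_0\bigr),
\]
fixing $\ba_0$ throughout. At $s=1$ all vectors coincide with $\ba_0$, and the configuration $(\ba_0,\ldots,\ba_0)$ visibly lies in $\tOmega$; and any two such ``constant'' configurations are joined inside $\tOmega$ by moving $\ba_0$ along a path in $Q$ where $\langle\ba_0(s),\ba_0(s)\rangle=1$ identically (such a path exists because $Q$, hence the diagonal, is connected — one can even use the same $\mathop{\mathrm{norm}}$ trick on a straight segment in $Q^1$). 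So the whole problem reduces to the single estimate: for $0\le s\le 1$,
\[
\Re\bigl\langle(1-s)\ba_j+s\ba_0,(1-s)\ba_j+s\ba_0\bigr\rangle>0
\quad\text{and}\quad
\Bigl|\tfrac{\langle(1-s)\ba_j+s\ba_0,(1-s)\ba_k+s\ba_0\rangle}{\sqrt{\cdots_j}\sqrt{\cdots_k}}-1\Bigr|<1 .
\]
Both expressions expand into combinations of the quantities $\langle\ba_p,\ba_q\rangle$ with $p,q\in\{0,j,k\}$, each of which by hypothesis lies in the open unit disc centered at $1$. Since that disc is convex and contains $1$, any convex combination of such quantities (with nonnegative coefficients summing to $1$) again lies in it; this immediately gives $\Re\langle(1-s)\ba_j+s\ba_0,(1-s)\ba_j+s\ba_0\rangle$ $=(1-s)^2\Re\langle\ba_j,\ba_j\rangle+2s(1-s)\Re\langle\ba_j,\ba_0\rangle+s^2\Re\langle\ba_0,\ba_0\rangle>0$ (all three real parts exceed $0$, and $\langle\ba_j,\ba_j\rangle=\langle\ba_0,\ba_0\rangle=1$), so the square root is defined; after that one checks the quotient estimate.

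The step I expect to be the genuine obstacle is the quotient estimate $|\langle\ba_j(s),\ba_k(s)\rangle-1|<1$: the numerator $(1-s)^2\langle\ba_j,\ba_k\rangle+s(1-s)(\langle\ba_j,\ba_0\rangle+\langle\ba_0,\ba_k\rangle)+s^2$ is a convex combination of numbers in the unit disc about $1$ and hence lies there, but one divides by $\sqrt{N_j(s)N_k(s)}$ where $N_p(s)=(1-s)^2\langle\ba_p,\ba_p\rangle+2s(1-s)\langle\ba_p,\ba_0\rangle+s^2$, and this denominator is a complex number close to but not equal to $1$, so the division can in principle push the result back out of the disc. The remedy is a quantitative bound: if $|\langle\ba_p,\ba_q\rangle-1|<1$ then writing $\langle\ba_p,\ba_q\rangle=1+\zeta_{pq}$ with $|\zeta_{pq}|<1$, one has $N_p(s)=1+\mu_p(s)$ with $\mu_p(s)$ a convex combination of $\zeta_{p0},\zeta_{pp}=0,\zeta_{00}=0$, hence $|\mu_p(s)|<1$, and similarly the numerator is $1+\nu_{jk}(s)$ with $|\nu_{jk}(s)|<1$; then $|\langle\ba_j(s),\ba_k(s)\rangle-1|=\bigl|\tfrac{1+\nu_{jk}(s)}{\sqrt{(1+\mu_j(s))(1+\mu_k(s))}}-1\bigr|$, and I would bound this by a short direct argument using $\Re N_p(s)>0$ and $|\mu_p(s)|<1$ — most cleanly by replacing the open conditions with the slightly stronger observation that all the inner products in play actually have real part bounded below and modulus bounded above (by~\eqref{eq_est_F}-type reasoning and the fact that $\Omega\subset\Psi$), which gives room to absorb the denominator. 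If the direct estimate is too tight, a fallback is to strengthen the homotopy: first move along $s\in[0,1]$ with $\ba_j(s)=\mathop{\mathrm{norm}}((1-s)\ba_j+s\bw)$ where $\bw$ is chosen as an ``average direction'' close to all the $\ba_j$, then contract; or, simplest of all, shrink the disc condition to $|\langle\ba_j,\ba_k\rangle-1|\le 1-\epsilon$ on a dense subset and pass to the limit — but I believe the straightforward convexity argument, combined with the elementary inequality $|(1+w)/\sqrt{(1+u)(1+v)}-1|<1$ valid when $|u|,|v|,|w|<1$ and $\Re(1+u),\Re(1+v)>0$, closes the proof.
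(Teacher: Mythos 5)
Your overall strategy is the same as the paper's: fix $\ba_0$, contract the other vertices onto it, and then join the diagonal configurations $(\ba,\ldots,\ba)$ using connectedness of $Q$. The gap is exactly in the step you flag as the obstacle, and it is a real gap, not a routine verification. The ``elementary inequality'' you rely on, namely $|(1+w)/\sqrt{(1+u)(1+v)}-1|<1$ whenever $|u|,|v|,|w|<1$ and $\Re(1+u),\Re(1+v)>0$, is false: take $u=v=-0.9$, $w=0.9$, which gives $|1.9/0.1-1|=18$. Even using the sharper bound that actually holds in your setting, $|\mu_p(s)|=|2s(1-s)q_{0p}|<\tfrac12$, the inequality still fails for an unconstrained numerator (e.g.\ $u=v=-0.49$, $w=0.99$ gives $1.99/0.51-1\approx 2.9$). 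So any proof must exploit the correlations you discard: here $\nu_{jk}(s)=(1-s)^2q_{jk}+s(1-s)(q_{0j}+q_{0k})$ and $\mu_p(s)=2s(1-s)q_{0p}$, so $1+\nu_{jk}=1+\tfrac{\mu_j+\mu_k}{2}+(1-s)^2q_{jk}$, while $\sqrt{(1+\mu_j)(1+\mu_k)}$ differs from $1+\tfrac{\mu_j+\mu_k}{2}$ by a term of order $s^2(1-s)^2|q_{0j}-q_{0k}|^2$. Turning this into the required bound $|\langle\ba_j(s),\ba_k(s)\rangle-1|<1$ needs a genuinely quantitative argument (delicate near $s=0$ and when the $|q|$'s approach $1$), which your write-up does not supply; the appeal to ``\eqref{eq_est_F}-type reasoning'' and the fallbacks (an averaging direction $\bw$, or shrinking to $|q_{uv}|\le 1-\epsilon$ and ``passing to the limit'') do not close it, since the failure of your cited inequality has nothing to do with openness of the conditions.

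The paper sidesteps the whole issue by not using the square-root normalization: it contracts toward $\ba_0^0$ along the explicit rational path $\ba_j(t)=\bigl(t(1+\tfrac12q_{0j}t)\ba_0^0+(1-t)\ba_j^0\bigr)/\bigl(1+q_{0j}(t-\tfrac12t^2)\bigr)$, which lies on $Q$ identically and for which the pairwise products come out in closed form, $\langle\ba_j(t),\ba_k(t)\rangle-1=q_{jk}(1-t)^2/\bigl[\bigl(1+q_{0j}(t-\tfrac12t^2)\bigr)\bigl(1+q_{0k}(t-\tfrac12t^2)\bigr)\bigr]$; since each denominator factor has modulus at least $1-t$, the bound $|\langle\ba_j(t),\ba_k(t)\rangle-1|\le|q_{jk}|<1$ is immediate. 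To repair your version you would either have to prove the correlated estimate above for the normalized segment (it appears plausible, but only because of the specific form of $\nu_{jk}$ and $\mu_p$), or replace your path by one of the paper's type, designed so that the contraction factor in the numerator exactly compensates the normalizing denominators.
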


\begin{proof}
Let $\widetilde{A}^0=(\ba^0_0,\ldots,\ba^0_{n+1})$ be a point in~$\tOmega$. Put $g_{jk}=\langle\ba_j^0,\ba_k^0\rangle$ and $q_{jk}=g_{jk}-1$. Consider the path~$\widetilde{A}(t)=(\ba_0(t),\ldots,\ba_{n+1}(t))$, $t\in[0,1]$, in~$(\C^{n+1})^{n+2}$ parametrized by
\begin{align*}
\ba_0(t)&=\ba^0_0,\\
\ba_j(t)&=\frac
{t\left(1+\frac12q_{0j}t\right)\ba_0^0+(1-t)\,\ba_j^0}
{1+q_{0j}\left(t-\frac{1}{2}t^2\right)}\,
,&&j=1,\ldots,n+1.
\end{align*}
Since $|q_{0j}|<1$, the denominator of the latter expression does not vanish for $t\in[0,1]$.
A direct computation shows that $\langle\ba_j(t),\ba_j(t)\rangle=1$, hence, $\widetilde{A}(t)\in Q^{n+2}$ for all $t\in[0,1]$. Also, a direct computation shows that 
\begin{align*}
\langle\ba_0(t),\ba_j(t)\rangle-1&=\frac{q_{0j}(1-t)^2}{1+q_{0j}\left(t-\frac{1}{2}t^2\right)}\,,&&1\le j\le n+1,\\
\langle\ba_j(t),\ba_k(t)\rangle-1&=\frac{q_{jk}(1-t)^2}{\left(1+q_{0j}\left(t-\frac{1}{2}t^2\right)\right)\left(1+q_{0k}\left(t-\frac{1}{2}t^2\right)\right)}\,,&&1\le j<k\le n+1.
\end{align*}
Since $|q_{0j}|<1$ and $0\le t\le 1$, we obtain that $|1+q_{0j}\left(t-\frac{1}{2}t^2\right)|\ge 1-t$, $j=1,\ldots,n+1$. Hence, 
$$
|\langle\ba_j(t),\ba_k(t)\rangle-1|\le |q_{jk}|<1,\qquad 0\le j<k\le n+1.
$$
Therefore the path $\widetilde{A}(t)$ is contained in~$\tOmega$. This path connects the initial point $\widetilde{A}(0)=\widetilde{A}^0$ with the point $\widetilde{A}(1)=(\ba^0_0,\ldots,\ba^0_0)$. It is easy to see that the hyperboloid~$Q$ is connected and the points $(\ba,\ldots,\ba)$ belong to~$\tOmega$ for all $\ba\in Q$. Hence any two points of the form $(\ba,\ldots,\ba)$, $\ba\in Q$, can be connected by a path in~$\tOmega$. Thus the set~$\tOmega$ is pathwise connected.\end{proof}

\begin{proof}[Proof of Lemma~\ref{lem_zero_sum}]
The set $\tOmega$ is a relatively open subset of the smooth complex affine variety~$Q^{n+2}$. Hence $\tOmega$ is a complex analytic manifold of dimension~$n(n+2)$.
Let $H(\ba_0,\ldots,\ba_{n+1})$ be the left-hand side of~\eqref{eq_stand_relation}. Since $F$ is a holomorphic function on~$\Omega$, we see that $H$ is a holomorphic function on~$\tOmega$.

If $\ba_0,\ldots,\ba_{n+1}\in\Lambda^n$, then the simplex in~$\Lambda^n$ with vertices $\ba_0,\ldots,\ba_{n+1}$ is degenerate. Hence the sum of oriented volumes of its $n$-dimensional faces is equal to zero:
$$
\sum_{j=0}^{n+1}(-1)^jV^{\orient}_{\Lambda^n}(\ba_0,\ldots,\hat\ba_j,\ldots,\ba_{n+1})=0.
$$
By~\eqref{eq_orient_vol_F}, we obtain that $H(\ba_0,\ldots,\ba_{n+1})=0$. 

The mapping 
$$
\varphi(z_0,\ldots,z_n)=\left(\frac{z_1}{iz_0},\ldots,\frac{z_n}{iz_0}\right)
$$
provides a biholomorphic isomorphism of a (connected) neighborhood~$U$ of the point~$\be_0$ in~$Q$ onto a neighborhood~$W$ of the origin in~$\C^n$ such that $\varphi(U\cap\Lambda^n)=W\cap\R^n$. Hence the mapping~$\Phi=\varphi^{n+2}$ is a biholomorphic isomorphism of the neighborhood~$\mathcal{U}=U^{n+2}$ of the point~$(\be_0,\ldots,\be_0)$ in~$Q^{n+2}$ onto the neighborhood~$\mathcal{W}=W^{n+2}$ of the origin in~$\C^{n(n+2)}$ such that $\Phi(\mathcal{U}\cap(\Lambda^n)^{n+2})=\mathcal{W}\cap \R^{n(n+2)}$. We may assume that the neighborhood $U$ is chosen small enough to obtain that $\mathcal{U}\subseteq\tOmega$. Any holomorphic function on~$\mathcal{W}$ that vanishes identically on~$\mathcal{W}\cap \R^{n(n+2)}$ vanishes identically on the whole~$\mathcal{W}$. Therefore, any holomorphic function on~$\mathcal{U}$ that vanishes identically on~$\mathcal{U}\cap (\Lambda^n)^{n+2}$ vanishes identically on the whole~$\mathcal{U}$. In particular, $H(\ba_0,\ldots,\ba_{n+1})=0$ for all $(\ba_0,\ldots,\ba_{n+1})\in\mathcal{U}$.
But, by Lemma~\ref{lem_connect}, the manifold~$\tOmega$ is pathwise connected. Thus $H(\ba_0,\ldots,\ba_{n+1})=0$ for all $(\ba_0,\ldots,\ba_{n+1})\in\tOmega$.
\end{proof}

\section{Hereditary orderings on simplicial complexes}\label{section_ordering}

Recall that a simplicial complex~$K$ is called \textit{flag} if, for any vertices $v_0,\ldots,v_q$ of~$K$ such that $\{v_j,v_k\}\in K$ for all~$j$ and~$k$, the simplex $\{v_0,\ldots,v_q\}$ belongs to~$K$.

A \textit{facet\/} of a simplex is a codimension~$1$ face of it. A pair of non-empty simplices $(\sigma,\tau)$ of a simplicial complex~$K$ is called a \textit{free pair} if 
\begin{enumerate}
\item $\tau$ is a facet of~$\sigma$,
\item $\sigma$ is a maximal with respect to inclusion simplex of~$K$,
\item $\tau$ is not contained in any other simplex of~$K$, except for~$\sigma$.
\end{enumerate}
Equivalently, $(\sigma,\tau)$ is a free pair of~$K$ if $\tau$ is a facet of~$\sigma$ and the set $K_1=K\setminus\{\sigma,\tau\}$ is a simplicial complex. If this is the case, then we shall say that $K_1$ is obtained from~$K$ by the \textit{elementary simplicial collapse\/} of the free pair~$(\sigma,\tau)$. We shall say that a finite simplicial complex~$K$ \textit{collapses\/} on a subcomplex $L\subseteq K$ if $L$ can be obtained from~$K$ by a sequence of elementary simplicial collapses. It is easy to see that if $K$ collapses on~$L$, then $L$ is a deformation retract of~$K$.

Let $K$ be a finite simplicial complex. Let $\succ$ be a strict total ordering of the set~$K$. For each non-empty simplex $\sigma$ of~$K$, we denote by $\mu(\sigma)$ the largest facet of~$\sigma$ with respect to~$\succ$. The ordering $\succ$ will be called \textit{hereditary\/} if 
\begin{enumerate}
\item $\sigma\succ\tau$ whenever $\dim\sigma>\dim\tau$,
\item $\sigma\succ\tau$ whenever $\mu(\sigma)\succ\mu(\tau)$.
\end{enumerate}

The main fact on hereditary orderings that will be needed to us is the following proposition. This proposition is implicitly contained in~\cite{Gai12}. However, for the convenience of the reader, we shall prove it here. 


\begin{propos}\label{propos_sigma_tau}
Let~$K$ be a flag simplicial complex, let $\succ$ be a hereditary ordering of~$K$, and let $r$ be a positive integer. Suppose that for any two simplices $\sigma,\tau\in K$  such that $\dim\sigma=\dim\tau= r$ and $\mu(\sigma)=\mu(\tau)$, we have $\sigma\cup\tau\in K$. Then the simplicial complex~$K$ collapses on a subcomplex of dimension less than~$r$.
\end{propos}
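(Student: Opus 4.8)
The plan is to argue by induction on the number of simplices of $K$, performing elementary collapses that preserve both flagness and the hypothesis on $r$-simplices sharing a largest facet. The key observation is that a hereditary ordering gives us a canonical choice of which collapse to perform: we look at the $\succ$-largest simplex $\sigma$ of $K$ of dimension at least $r$, set $\tau=\mu(\sigma)$ (its largest facet), and try to collapse the pair $(\sigma,\tau)$. First I would verify that $\sigma$ is indeed maximal with respect to inclusion: if $\sigma$ were a facet of some $\rho\in K$, then $\dim\rho>\dim\sigma\ge r$, so by axiom (1) of hereditary orderings $\rho\succ\sigma$, contradicting the choice of $\sigma$. So condition (2) in the definition of a free pair holds automatically.

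The heart of the argument is checking condition (3): that $\tau=\mu(\sigma)$ is not a facet of any simplex other than $\sigma$. Suppose $\tau$ is a facet of some $\sigma'\ne\sigma$ with $\sigma'\in K$. Then $\dim\sigma'=\dim\sigma=\dim\tau+1$. Since $\sigma$ was chosen $\succ$-largest among simplices of dimension $\ge r$, and $\dim\sigma'=\dim\sigma\ge r$, we have $\sigma\succeq\sigma'$, in fact $\sigma\succ\sigma'$ as they are distinct. Now $\mu(\sigma')$ is the largest facet of $\sigma'$, so $\mu(\sigma')\succeq\tau=\mu(\sigma)$; if $\mu(\sigma')\succ\mu(\sigma)$ then axiom (2) gives $\sigma'\succ\sigma$, a contradiction, so $\mu(\sigma')=\tau=\mu(\sigma)$. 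Here is where I split into two cases. If $\dim\sigma=r$: then $\sigma,\sigma'$ are two $r$-simplices with $\mu(\sigma)=\mu(\sigma')$, so the hypothesis gives $\sigma\cup\sigma'\in K$; but $|\sigma\cup\sigma'|=r+2>\dim\sigma+1$, so $\sigma\cup\sigma'$ properly contains $\sigma$ and has dimension $\ge r$, contradicting the maximality of $\sigma$ (with respect to inclusion, which we established, or directly with respect to $\succ$ via axiom (1)). If $\dim\sigma>r$: I would pass to an $r$-dimensional face. Pick vertices so that $\sigma=\tau\cup\{a\}$ and $\sigma'=\tau\cup\{a'\}$ with $a\ne a'$, write $\tau=\mu(\sigma)$, and let $\bar\tau$ be an $(r-1)$-face of $\tau$ chosen to be $\mu$ of the $r$-face $\bar\tau\cup\{v\}$ for the relevant vertices — more precisely, since $\dim\tau=\dim\sigma-1\ge r$, I would drill down using heredity to produce two distinct $r$-simplices inside $\sigma$ and $\sigma'$ respectively that share the same $\mu$, then invoke the hypothesis; the flagness of $K$ is what guarantees the resulting $(r+1)$-vertex set is again a simplex of $K$. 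This case analysis — propagating the "shared largest facet" property down to dimension exactly $r$ — is the step I expect to be the main obstacle, and it is essentially the quantitative combinatorial content borrowed from \cite{Gai12}.

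Granting condition (3), the pair $(\sigma,\tau)$ is free, and $K_1=K\setminus\{\sigma,\tau\}$ is a simplicial complex obtained by an elementary collapse. I would then check that $K_1$ still satisfies the hypotheses of the proposition. Flagness is preserved because we only removed a maximal simplex $\sigma$ and a facet $\tau$ of it that lies in no other simplex; no new "empty" cliques are created, since any clique on vertices of $K_1$ that is not a simplex of $K_1$ would have to be $\sigma$ or $\tau$ (the only simplices removed) or contain $\sigma$ — but $\sigma$ was inclusion-maximal — so the flag condition for $K_1$ reduces to that for $K$. The hypothesis on $r$-simplices sharing a largest facet is inherited: if $\dim\sigma>r$ then no $r$-simplex was removed and the statement for $K_1$ is immediate; if $\dim\sigma=r$ then we removed the $r$-simplex $\sigma$, but then for any $r$-simplex $\tau'\ne\tau''$ of $K_1$ with $\mu(\tau')=\mu(\tau'')$ the union $\tau'\cup\tau''$ lies in $K$ by hypothesis, and it cannot be $\sigma$ (which has only $r+1$ vertices) nor contain the removed $\tau$ as its only top face, so $\tau'\cup\tau''\in K_1$. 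Finally the restriction of $\succ$ to $K_1$ is again a hereditary ordering, since axioms (1)–(2) are conditions on pairs of simplices and pass to subsets, and $\mu$ is unchanged on $K_1$ because removing $\sigma$ (the unique simplex having $\tau$ as a facet) does not alter the largest facet of any surviving simplex.

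By induction on $|K|$, the complex $K_1$ collapses onto a subcomplex of dimension less than $r$, and prepending the elementary collapse of $(\sigma,\tau)$ shows $K$ does too. The base case is when $K$ has no simplex of dimension $\ge r$, i.e. $\dim K<r$, where there is nothing to prove. This completes the induction and the proof.
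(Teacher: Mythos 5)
Your freeness check for the first pair $(\sigma,\mu(\sigma))$ is essentially right (and the ``drill down'' you sketch for the case $\dim\sigma>r$ is exactly the paper's Lemma~\ref{lem_flag}: set $\rho=\mu_{r-1}$ of the common facet, form $\rho\cup\{a\}$ and $\rho\cup\{a'\}$, observe both have $\mu$ equal to $\rho$, apply the dimension-$r$ hypothesis to get $\{a,a'\}\in K$, and only then use flagness of $K$ to conclude $\sigma\cup\sigma'\in K$). The genuine gap is in your inductive step: flagness is \emph{not} preserved by the elementary collapse. If $\dim\sigma\ge 3$, every edge of $\sigma$ survives in $K_1=K\setminus\{\sigma,\mu(\sigma)\}$, so the vertex sets of $\sigma$ and of $\tau=\mu(\sigma)$ are cliques in the $1$-skeleton of $K_1$ that are no longer simplices of $K_1$; for instance, if $K$ is the full $3$-simplex, $K_1$ is not flag. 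Your parenthetical dismissal of the cliques ``$\sigma$ or $\tau$'' is precisely where this fails, and since flagness is part of your induction hypothesis (you need it for the $\dim\sigma>r$ case of the freeness check), the induction on $|K|$ does not go through as stated. There is also a smaller slip: when $\dim\sigma=r+1$ the removed facet $\tau=\mu(\sigma)$ \emph{is} an $r$-simplex, so the claim that ``no $r$-simplex was removed'' when $\dim\sigma>r$ is wrong; one must also rule out that a union $\tau'\cup\tau''$ of two surviving $r$-simplices with equal $\mu$ coincides with a removed simplex, which requires the observation (the paper's Lemma~\ref{lem_sigma_cup_tau}) that $\mu(\tau'\cup\tau'')$ would then be the $\succ$-larger of $\tau',\tau''$, hence not the removed $\tau$.

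The paper sidesteps all of this by never inducting on the collapsed complex. It first uses flagness of the \emph{original} $K$ once (Lemma~\ref{lem_flag}) to upgrade the hypothesis from dimension exactly $r$ to all dimensions $\ge r$, and then (Lemma~\ref{lem_sigma_tau}) removes in a single sweep all pairs $(\sigma_j,\mu(\sigma_j))$ with $\sigma_j\notin M(K)$ and $\dim\sigma_j\ge r$, taken in decreasing $\succ$-order; every verification (that each intermediate $K_j$ is a simplicial complex, that each removal is an elementary collapse, and, via Lemma~\ref{lem_lambda}, that the final complex has dimension less than $r$) refers only to the fixed complex $K$, its hereditary ordering, and the set $M(K)$ of largest facets, so no hypothesis ever needs to be re-established for the intermediate complexes. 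If you want to keep a step-by-step induction, you should carry the upgraded ``all dimensions $\ge r$'' hypothesis rather than flagness, and you would still have to check that it survives each removal; the paper's global bookkeeping is the cleaner route.
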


To prove this proposition we need to introduce some auxiliary notation and to prove several simple properties of hereditary orderings. Below $(K,\succ)$ is always a simplicial complex with a hereditary ordering.   

Suppose that $\sigma\in K$ and $j\le \dim\sigma$. Consider all $j$-dimensional faces of~$\sigma$, and denote by~$\mu_j(\sigma)$ the largest of them with respect to~$\succ$.
In particular, $\mu_{\dim\sigma-1}(\sigma)=\mu(\sigma)$. It follows immediately from the definition of a hereditary ordering that 
$$
\emptyset=\mu_{-1}(\sigma)\subset\mu_0(\sigma)\subset\cdots\subset\mu_{\dim\sigma}(\sigma)=\sigma,
$$
and $\mu_j(\mu_k(\sigma))=\mu_j(\sigma)$ whenever $j\le k\le\dim\sigma$. Therefore, if $\dim\sigma=\dim\tau$ and there is a $j<\dim\sigma$ such that $\mu_j(\sigma)\succ\mu_j(\tau)$, then $\sigma\succ\tau$.

\begin{lem}\label{lem_sigma_cup_tau}
Suppose that $\sigma$ and~$\tau$ are simplices of~$K$ such that $\mu(\sigma)=\mu(\tau)$, $\sigma\succ\tau$, and $\sigma\cup\tau\in K$. Then $\mu(\sigma\cup\tau)=\sigma$.
\end{lem}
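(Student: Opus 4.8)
The plan is to prove Lemma~\ref{lem_sigma_cup_tau} by a direct comparison of facets, using the key observation that $\sigma$ and $\tau$ are both facets of $\rho:=\sigma\cup\tau$. Indeed, since $\sigma\ne\tau$ (as $\sigma\succ\tau$) but $\mu(\sigma)=\mu(\tau)$, the two simplices have the same dimension, say $\dim\sigma=\dim\tau=k$, and $\mu(\sigma)=\mu(\tau)$ is a common $(k-1)$-face; hence $|\sigma\cup\tau|=k+2$, so $\dim\rho=k+1$ and both $\sigma$ and $\tau$ are facets of $\rho$. Write $\sigma=\mu(\sigma)\cup\{a\}$ and $\tau=\mu(\tau)\cup\{b\}=\mu(\sigma)\cup\{b\}$ with $a\ne b$; then $\rho=\mu(\sigma)\cup\{a,b\}$, and the facets of $\rho$ are exactly $\sigma$, $\tau$, and the $k$ facets obtained by deleting one vertex of $\mu(\sigma)$ and keeping both $a$ and $b$.

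Next I would show $\mu(\rho)=\sigma$ by ruling out every other facet. First, among $\sigma$ and $\tau$ themselves, $\sigma\succ\tau$ is given, so $\tau$ cannot be the largest facet. It remains to show that every facet $\nu$ of $\rho$ containing both $a$ and $b$ satisfies $\sigma\succ\nu$. Such a $\nu$ has the form $\nu=(\mu(\sigma)\setminus\{c\})\cup\{a,b\}$ for some vertex $c$ of $\mu(\sigma)$; in particular $\dim\nu=k=\dim\sigma$, so condition~(2) in the definition of a hereditary ordering does not directly apply and I instead compare via the chain $\mu_j$. The idea: since $b\notin\sigma$ but $b\in\nu$, and $\sigma=\mu_k(\sigma)\supset\mu_{k-1}(\sigma)\supset\cdots$, there should be a smallest index $j$ at which the $\mu_j$-chains of $\sigma$ and $\nu$ differ, and at that index I must argue $\mu_j(\sigma)\succ\mu_j(\nu)$, which by the remark preceding the lemma gives $\sigma\succ\nu$.

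To make the last comparison work, I would exploit maximality of $\mu(\sigma)$ as a $(k-1)$-face of $\sigma$. Observe that $\mu(\sigma)=\mu_{k-1}(\sigma)$ contains $a$ (since $\mu(\sigma)\subset\sigma=\mu(\sigma)\cup\{a\}$ forces... wait, $a\notin\mu(\sigma)$; rather $\mu(\sigma)\cup\{a\}=\sigma$ with $a\notin\mu(\sigma)$). The crucial point is that any $(k-1)$-face of $\nu$ not containing $b$ is a $(k-1)$-face of $\mu(\sigma)\cup\{a\}=\sigma$, while $\mu(\sigma)$ is the $\succ$-largest $(k-1)$-face of $\sigma$; and any $(k-1)$-face of $\nu$ containing $b$ is, in particular, a $(k-1)$-face of $\rho$ containing both $a$ and $b$ or containing $b$ but not $a$. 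I would compare $\mu_{k-1}(\nu)$ with $\mu_{k-1}(\sigma)=\mu(\sigma)$: every $(k-1)$-face of $\nu$ is a $(k-1)$-face of $\rho$, hence either equals $\mu(\sigma)=\mu(\tau)$ (impossible, since $\mu(\sigma)\subset\sigma$ and $\mu(\sigma)\not\subset\nu$ as $\nu$ omits some vertex $c\in\mu(\sigma)$ — unless $c$... one must be careful here) or is a proper face of $\sigma$, of $\tau$, or contains both $a,b$. Pushing this bookkeeping through, one gets $\mu_{k-1}(\sigma)=\mu(\sigma)\succeq\mu_{k-1}(\nu)$, and if equality holds one descends to $\mu_{k-2}$ and repeats; since $\sigma\ne\nu$ the chains eventually differ and strictness appears, yielding $\sigma\succ\nu$. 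Hence $\sigma$ beats $\tau$ and all the remaining facets, so $\mu(\rho)=\sigma$.

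The main obstacle I anticipate is precisely this last inductive facet-comparison: one must carefully track, at each level $j$ from $k-1$ downward, which $j$-faces of $\rho$ can arise as $\mu_j(\nu)$ versus $\mu_j(\sigma)$, and use the maximality built into the definitions of $\mu$ and $\mu_j$ together with the hypothesis $\sigma\succ\tau$ — rather than any positivity or flagness, which are not needed for this lemma (those enter only in Proposition~\ref{propos_sigma_tau}). A clean way to organize it is to prove the contrapositive-style statement: if $\nu$ is any facet of $\rho$ with $\nu\ne\sigma$, then $\sigma\succ\nu$, handling the two cases $\nu=\tau$ (immediate) and $\{a,b\}\subseteq\nu$ (the inductive comparison) separately. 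I would keep the argument short by invoking the already-established facts $\mu_j(\mu_k(\sigma))=\mu_j(\sigma)$ and the implication ``$\dim\sigma=\dim\tau$ and $\mu_j(\sigma)\succ\mu_j(\tau)$ for some $j<\dim\sigma$ $\Rightarrow$ $\sigma\succ\tau$'' from the paragraph preceding the lemma, so that the proof reduces to locating the first level of disagreement and checking the inequality goes the right way there.
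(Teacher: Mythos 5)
Your overall strategy (show directly that $\sigma$ beats every other facet of $\rho=\sigma\cup\tau$, comparing $\mu_j$-chains at the first level of disagreement) is different from the paper's, which instead studies the chain $\mu_j(\sigma\cup\tau)$ itself and proves by upward induction on $j$ that $\mu_j(\sigma\cup\tau)\subseteq\sigma\cap\tau$ for $j\le k-1$, forcing $\mu(\sigma\cup\tau)\in\{\sigma,\tau\}$. Your route can be made to work, but as written it has a genuine gap exactly at the only nontrivial case: a facet $\nu=(\mu(\sigma)\setminus\{c\})\cup\{a,b\}$ containing both new vertices. Your bookkeeping only invokes maximality of $\mu(\sigma)$ among the facets of $\sigma$ and of $\mu(\tau)$ among the facets of $\tau$; this says nothing about $(k-1)$-faces of $\rho$ that contain both $a$ and $b$, and such a face may well be $\mu_{k-1}(\nu)$. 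So the asserted inequality $\mu(\sigma)\succeq\mu_{k-1}(\nu)$ is precisely what needs proof and is not delivered by ``pushing this bookkeeping through''; moreover the fallback ``if equality holds one descends to $\mu_{k-2}$'' is vacuous (equality is impossible since $c\in\mu(\sigma)$ but $c\notin\nu$) and descending from the top level does not help, because at every level the same unresolved case of faces containing both $a$ and $b$ reappears. In effect you would be assuming a statement equivalent to the lemma one dimension down.

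The missing idea that closes the gap is to work, as you first announced, at the \emph{smallest} index $j$ with $\mu_j(\sigma)\ne\mu_j(\nu)$ and to use that the common part lies in $\mu(\sigma)$. Indeed $j\le k-1$ (if $j=k$ then $\mu(\sigma)=\mu_{k-1}(\nu)\subseteq\nu$, impossible as $c\notin\nu$), and $\alpha:=\mu_{j-1}(\sigma)=\mu_{j-1}(\nu)\subseteq\mu_{k-1}(\sigma)=\mu(\sigma)=\mu(\tau)$, so $\alpha$ contains neither $a$ nor $b$. Since $\mu_j(\nu)=\alpha\cup\{y\}$ for some vertex $y$ of $\nu$, either $y\ne b$, in which case $\mu_j(\nu)$ is a $j$-face of $\sigma$ and maximality of $\mu_j(\sigma)$ in $\sigma$ gives $\mu_j(\sigma)\succ\mu_j(\nu)$; or $y=b$, in which case $\mu_j(\nu)\subseteq\tau$ and $\mu_j(\tau)=\mu_j(\mu(\tau))=\mu_j(\mu(\sigma))=\mu_j(\sigma)$ gives the same strict inequality. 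Then the implication you quoted yields $\sigma\succ\nu$, and together with $\sigma\succ\tau$ this proves $\mu(\rho)=\sigma$. Without this step your proposal does not yet constitute a proof; with it, it becomes a correct alternative to the paper's argument, of comparable length.
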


\begin{proof} Suppose that $\dim\sigma=k$. Since $\mu(\sigma)=\mu(\tau)$, we have $\dim\tau=k$ and $\dim(\sigma\cup\tau)=k+1$. For every $j=-1,\ldots,k$, we put~$\rho_j=\mu_j(\sigma\cup\tau)$. 

Let us prove by induction on~$j$ that~$\rho_j$ is contained in~$\sigma\cap\tau$ whenever $j\le k-1$. Since $\rho_{-1}=\emptyset$, the base of induction is trivial. Assume that $\rho_{j-1}\subseteq\sigma\cap\tau$ and prove that $\rho_{j}\subseteq\sigma\cap\tau$,  $j\le k-1$. The facet~$\rho_{j-1}$ of the $j$-dimensional simplex~$\rho_j$ is contained in~$\sigma\cap\tau$, and the simplex~$\rho_j$ itself is contained in~$\sigma\cup\tau$. It follows easily that $\rho_j$ is contained in at least one of the two simplices~$\sigma$ and~$\tau$. If $\rho_j$ were not contained in~$\sigma$, then it would be contained in~$\tau$, and we would obtain that $\mu_j(\tau)=\rho_j\succ\mu_j(\sigma)$, which is impossible since $\dim\sigma=\dim\tau$ and~$\sigma\succ\tau$. Hence $\rho_j$ is contained in~$\sigma$. Therefore $\rho_j=\mu_j(\sigma)$. Since $j\le k-1$, we obtain that $\rho_j\subseteq\mu(\sigma)=\sigma\cap\tau$.

Thus $\rho_{k-1}=\sigma\cap\tau$. Hence $\mu(\sigma\cup\tau)=\rho_k$ is either~$\sigma$ or~$\tau$. Since $\sigma\succ\tau$, we obtain that $\mu(\sigma\cup\tau)=\sigma$. 
\end{proof}

We denote by $M(K)$ the set of all simplices~$\sigma\in K$ such that $\sigma=\mu(\eta)$ for some $\eta\in K$.
Let $\sigma\in K$ be a simplex that is not maximal with respect to inclusion, and let $\dim\sigma=k$. Consider all $(k+1)$-dimensional simplices of~$K$ containing~$\sigma$, and denote by~$\lambda(\sigma)$ the smallest of them with respect to~$\succ$.

\begin{lem}\label{lem_lambda}
If $\sigma\in M(K)$, then $\mu(\lambda(\sigma))=\sigma$ and $\lambda(\sigma)\notin M(K)$.
\end{lem}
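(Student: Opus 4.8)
The plan is to analyze the $(k+1)$-dimensional simplices containing $\sigma$ by comparing their chains of faces $\mu_j$, exploiting that $\sigma \in M(K)$ means $\sigma = \mu(\eta)$ for some $(k+1)$-simplex $\eta \ni \sigma$, so the set over which $\lambda(\sigma)$ is the minimum is non-empty and contains at least one simplex whose largest facet is exactly $\sigma$. First I would fix $k = \dim\sigma$ and consider an arbitrary $(k+1)$-simplex $\omega \supseteq \sigma$ in $K$. I claim $\mu_j(\omega) = \mu_j(\sigma)$ for all $j \le k-1$: indeed $\mu_{k-1}(\sigma) = \mu(\sigma)$ and more generally $\mu_j(\sigma) \subseteq \sigma \subseteq \omega$, so $\mu_j(\sigma)$ is a $j$-face of $\omega$, whence $\mu_j(\omega) \succeq \mu_j(\sigma)$; conversely $\mu_j(\omega) = \mu_j(\mu_{j+1}(\omega)) = \cdots = \mu_j(\mu_k(\omega))$, and if I can show $\mu_k(\omega) = \sigma$ I am done with the reverse inequality. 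So the heart of the matter is: among all $(k+1)$-simplices containing $\sigma$, which ones have $\mu(\omega) = \mu_k(\omega) = \sigma$, and is $\lambda(\sigma)$ one of them?

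Second, since $\sigma \in M(K)$, pick $\eta \in K$ with $\mu(\eta) = \sigma$; then $\dim\eta = k+1$ and $\sigma \subsetneq \eta$, so $\eta$ is among the $(k+1)$-simplices containing $\sigma$, and $\eta$ satisfies $\mu(\eta) = \sigma$. Now take any other $(k+1)$-simplex $\omega \supseteq \sigma$ with $\omega \ne \eta$. Both $\eta$ and $\omega$ have dimension $k+1$, and by the computation above $\mu_j(\eta) = \mu_j(\omega) = \mu_j(\sigma)$ for all $j \le k-1$. Consider $\mu_k(\omega)$: it is a $k$-face of $\omega$ containing $\mu_{k-1}(\omega) = \mu_{k-1}(\sigma) = \mu(\sigma)$. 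Either $\mu_k(\omega) = \sigma$, or $\mu_k(\omega) \ne \sigma$, in which case $\mu_k(\omega) \succ \sigma$ (it cannot be $\prec$ since $\sigma$ is itself a $k$-face of $\omega$), hence $\mu(\omega) \succ \mu(\eta)$, hence $\omega \succ \eta$ by heredity. Therefore among $(k+1)$-simplices containing $\sigma$, those with $\mu = \sigma$ are exactly the $\succ$-smallest ones, and $\eta$ is among them; consequently $\lambda(\sigma) \preceq \eta$ and $\mu(\lambda(\sigma)) = \sigma$. (If $\lambda(\sigma) = \eta$ this is immediate; if $\lambda(\sigma) \prec \eta$ then by the dichotomy $\mu(\lambda(\sigma)) = \sigma$ as well, since $\mu(\lambda(\sigma)) \succ \sigma$ would force $\lambda(\sigma) \succ \eta$.)

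Third, for the statement $\lambda(\sigma) \notin M(K)$: suppose $\lambda(\sigma) = \mu(\zeta)$ for some $\zeta \in K$. Then $\zeta$ is a $(k+2)$-simplex containing $\lambda(\sigma)$, hence containing $\sigma$. Apply the first step with $\dim\zeta = k+2$ and the face $\sigma$ of dimension $k$: we get $\mu_j(\zeta) = \mu_j(\sigma)$ for $j \le k-1$, and $\mu_k(\zeta)$ is a $k$-face of $\zeta$ containing $\mu(\sigma)$. But $\mu_k(\zeta) \subseteq \mu_{k+1}(\zeta) = \mu(\zeta) = \lambda(\sigma)$, and $\lambda(\sigma)$ is a $(k+1)$-simplex whose unique $k$-face containing $\sigma$ other than $\sigma$ itself — well, I would argue instead: $\mu_k(\lambda(\sigma)) = \mu(\lambda(\sigma)) = \sigma$ by the second step, and $\mu_k(\zeta) = \mu_k(\mu_{k+1}(\zeta)) = \mu_k(\lambda(\sigma)) = \sigma$. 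So $\sigma = \mu_k(\zeta) \subseteq \mu_{k+1}(\zeta) = \lambda(\sigma)$, fine, and now look one level up: $\lambda(\sigma) = \mu_{k+1}(\zeta)$ is the largest $(k+1)$-face of $\zeta$. Since $\sigma \subsetneq \lambda(\sigma) \subseteq \zeta$ and $\sigma$ is not maximal, there is another $(k+1)$-face $\omega'$ of $\zeta$ containing $\sigma$ (obtained by adjoining to $\sigma$ a vertex of $\zeta \setminus \lambda(\sigma)$). Then $\omega'$ is a $(k+1)$-simplex containing $\sigma$, so $\omega' \succeq \lambda(\sigma)$ by minimality of $\lambda(\sigma)$; but $\lambda(\sigma) = \mu_{k+1}(\zeta) \succeq \omega'$ since $\lambda(\sigma)$ is the largest $(k+1)$-face of $\zeta$. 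Hence $\omega' = \lambda(\sigma)$, which is absurd because $\omega' \ne \lambda(\sigma)$: indeed $\omega'$ contains a vertex not in $\lambda(\sigma)$. This contradiction shows $\lambda(\sigma) \notin M(K)$.

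\textbf{Main obstacle.} The delicate point is the bookkeeping in the second and third steps — keeping straight which simplex plays the role of "the $\mu_j$-chain agrees with $\sigma$'s chain up to level $k-1$", and correctly deducing from the heredity axiom $(2)$ (comparison of $\mu$'s) combined with the refinement "if $\mu_j(\sigma) \succ \mu_j(\tau)$ for some $j < \dim\sigma = \dim\tau$ then $\sigma \succ \tau$" noted just before Lemma~\ref{lem_sigma_cup_tau}. I expect no analytic or combinatorial difficulty beyond this; everything reduces to the stability property $\mu_j(\mu_k(\sigma)) = \mu_j(\sigma)$ and the observation that a proper non-maximal face always extends inside any larger simplex containing it in more than one way.
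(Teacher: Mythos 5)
Your proof is correct and follows essentially the same route as the paper's: for $\mu(\lambda(\sigma))=\sigma$ you play the minimality of $\lambda(\sigma)$ among $(k+1)$-simplices containing $\sigma$ against the heredity axiom applied to a witness $\eta$ with $\mu(\eta)=\sigma$, and for $\lambda(\sigma)\notin M(K)$ you produce a second $(k+1)$-face of $\zeta$ containing $\sigma$ and contradict the maximality of $\mu(\zeta)$ — exactly the paper's two contradictions. The only remark is that your step-one claim $\mu_j(\omega)=\mu_j(\sigma)$ for $j\le k-1$ is left unproved (and is false for general $\omega\supseteq\sigma$), but it is never actually needed: your dichotomy and final contradiction use only that $\sigma$ is a $k$-face of $\omega$ and that $\omega'$ is a $(k+1)$-face of $\zeta$, so that claim can simply be deleted.
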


\begin{proof}
Assume that $\mu(\lambda(\sigma))=\tau\ne\sigma$. Then $\tau\succ\sigma$. Since $\sigma\in M(K)$, we obtain that there exists a simplex $\eta\in K$ such that $\mu(\eta)=\sigma$. Since $\eta\supset\sigma$ and $\dim\eta=\dim\sigma+1$, we see that $\eta\succ\lambda(\sigma)$. On the other hand, since $\mu(\lambda(\sigma))=\tau\succ\sigma=\mu(\eta)$, we obtain that $\lambda(\sigma)\succ\eta$. This contradiction proves that $\mu(\lambda(\sigma))=\sigma$.

Now, assume that $\lambda(\sigma)\in M(K)$, i.\,e., $\lambda(\sigma)=\mu(\xi)$ for some $\xi\in K$. We have $\dim\xi=\dim\sigma+2$. Let $\zeta$ be the facet of~$\xi$ such that $\zeta\supset\sigma$ and $\zeta\ne\lambda(\sigma)$. Since $\zeta\supset\sigma$ and $\dim\zeta=\dim\sigma+1$, we see that $\zeta\succ\lambda(\sigma)$. On the other hand, since $\lambda(\sigma)=\mu(\xi)$, we have $\lambda(\sigma)\succ\zeta$. This contradiction proves that $\lambda(\sigma)\notin M(K)$.
\end{proof}

\begin{lem}\label{lem_flag}
Let $(K,\succ)$ be a flag simplicial complex with a hereditary ordering, and let~$r$ be a positive integer. Suppose that for any two simplices $\sigma,\tau\in K$  such that $\dim\sigma=\dim\tau=r$ and $\mu(\sigma)=\mu(\tau)$, we have $\sigma\cup\tau\in K$. Then for any two simplices $\sigma,\tau\in K$  such that $\dim\sigma=\dim\tau>r$ and $\mu(\sigma)=\mu(\tau)$, we also have $\sigma\cup\tau\in K$.
\end{lem}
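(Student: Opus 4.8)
The plan is to argue by induction on $k = \dim\sigma = \dim\tau$, the base case $k = r$ being exactly the hypothesis. So suppose $k > r$ and that the statement is already known for dimension $k-1$; take $\sigma, \tau \in K$ with $\dim\sigma = \dim\tau = k$ and $\mu(\sigma) = \mu(\tau) =: \rho$. Since $K$ is flag, to conclude $\sigma \cup \tau \in K$ it suffices to show that every pair of vertices $\{v, w\}$ with $v, w \in \sigma \cup \tau$ is an edge of $K$; the only pairs not automatically in $K$ are those with $v \in \sigma \setminus \tau$ and $w \in \tau \setminus \sigma$ (pairs inside $\sigma$ or inside $\tau$ are already edges). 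So fix such a pair $v, w$.

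The key step is to produce two $(k-1)$-dimensional simplices to which the inductive hypothesis applies. First I would note that $\rho = \mu(\sigma)$ is a common facet of $\sigma$ and $\tau$, so $\sigma \cap \tau \supseteq \rho$, whence $\sigma \cap \tau$ has dimension $k-1$ or $k$; since $\sigma \ne \tau$ it has dimension exactly $k-1$ and in fact $\sigma \cap \tau = \rho$. Now consider the facet $\sigma' := (\sigma \setminus \{v\}) \cup \{w\}$? That need not be a simplex of $K$. Instead, the right move is: let $\sigma' = \sigma \setminus \{x\}$ where $x$ is the vertex of $\sigma$ not in $\rho$ — but then $\sigma' = \rho$. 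This is too crude. The correct approach is to descend through the chain $\mu_j$: recall $\mu_{j}(\sigma) \subset \mu_{j+1}(\sigma)$ and $\mu_j(\mu_{k-1}(\sigma)) = \mu_j(\sigma)$. Pick the facets $\sigma_1, \sigma_2$ of $\sigma$ and $\tau$ respectively that both contain $\mu_{k-2}(\rho)$ together with the vertices needed so that $\mu(\sigma_1) = \mu(\sigma_2)$; more precisely, one takes $\sigma_1$ to be a facet of $\sigma$ containing $v$ and the full chain $\mu_0(\rho) \subset \cdots \subset \mu_{k-2}(\rho)$, and $\sigma_2$ a facet of $\tau$ containing $w$ and the same chain, arranged so that $\mu_{k-2}(\sigma_1) = \mu_{k-2}(\sigma_2)$. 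One checks using properties (1)--(2) of a hereditary ordering and the nestedness of the $\mu_j(\sigma)$ that such facets exist with $\mu(\sigma_1) = \mu(\sigma_2)$, both of dimension $k-1 \ge r$.

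Applying the inductive hypothesis to $\sigma_1$ and $\sigma_2$ gives $\sigma_1 \cup \sigma_2 \in K$; since $v \in \sigma_1$ and $w \in \sigma_2$, the edge $\{v,w\}$ lies in $\sigma_1 \cup \sigma_2 \subseteq K$, as required. Because $v, w$ were an arbitrary "crossing" pair and all other pairs in $\sigma \cup \tau$ are already edges, flagness of $K$ yields $\sigma \cup \tau \in K$, completing the induction. The main obstacle I anticipate is the bookkeeping in the middle step: one must verify that the facets $\sigma_1 \subset \sigma$ and $\sigma_2 \subset \tau$ can be chosen to simultaneously (i) have a common largest facet $\mu(\sigma_1) = \mu(\sigma_2)$ — which is what makes the inductive hypothesis applicable — and (ii) contain the prescribed vertices $v$ and $w$. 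Getting (i) right requires carefully exploiting that $\mu(\sigma) = \mu(\tau)$ forces the whole flags $\mu_0(\sigma) \subset \cdots \subset \mu_{k-1}(\sigma)$ and $\mu_0(\tau) \subset \cdots \subset \mu_{k-1}(\tau)$ to coincide below level $k-1$, so that the "small" faces of $\sigma$ and of $\tau$ are genuinely shared; this is precisely the sort of argument already used in the proof of Lemma~\ref{lem_sigma_cup_tau}, and I would model the details on that proof.
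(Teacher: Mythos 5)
Your proposal is correct and is essentially the paper's argument: attach the two opposite vertices $u\in\sigma\setminus\tau$ and $v\in\tau\setminus\sigma$ to the largest low-dimensional face of the common facet, check that the two resulting simplices share their largest facet, apply the dimension-$r$ hypothesis to obtain the crossing edge $\{u,v\}\in K$, and finish by flagness. The only difference is cosmetic: the paper avoids the induction by putting $\rho=\mu_{r-1}(\sigma\cap\tau)$ and applying the hypothesis once to $\sigma'=\rho\cup\{u\}$ and $\tau'=\rho\cup\{v\}$, the verification $\mu(\sigma')=\mu(\tau')=\rho$ being exactly the easy check you sketch (every facet of $\rho\cup\{u\}$ is an $(r-1)$-face of $\sigma$, and the largest such face, $\mu_{r-1}(\sigma)=\rho$, is one of them).
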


\begin{proof}
Assume that $\dim\sigma=\dim\tau>r$, $\mu(\sigma)=\mu(\tau)$, and $\sigma\ne\tau$. Let $u$ and $v$ be the vertices of the simplices~$\sigma$ and~$\tau$, respectively, opposite to their common facet \mbox{$\sigma\cap\tau=\mu(\sigma)=\mu(\tau)$}. We put, $\rho=\mu_{r-1}(\sigma\cap\tau)$, $\sigma'=\rho\cup\{u\}$, and $\tau'=\rho\cup\{v\}$. Since $\sigma\cap\tau=\mu(\sigma)=\mu(\tau)$, we obtain that $\rho=\mu_{r-1}(\sigma)=\mu_{r-1}(\tau)$. Hence $\rho=\mu(\sigma')=\mu(\tau')$. Since $\dim\sigma'=\dim\tau'=r$, it follows that $\sigma'\cup\tau'\in K$. Therefore $\{u,v\}\in K$. Since $K$ is a flag complex, we obtain that $\sigma\cup\tau\in K$.
\end{proof}

\begin{lem}\label{lem_sigma_tau}
Let~$K$ be a simplicial complex, let $\succ$ be a hereditary ordering of~$K$, and let~$r$ be a positive integer. Suppose that for any two simplices $\sigma,\tau\in K$  such that $\dim\sigma=\dim\tau\ge r$ and $\mu(\sigma)=\mu(\tau)$, we have $\sigma\cup\tau\in K$. Then the simplicial complex~$K$ collapses on a subcomplex of dimension less than~$r$.
\end{lem}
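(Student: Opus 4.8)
The plan is to reduce the general statement to the flag case already handled in Proposition~\ref{propos_sigma_tau}, by passing from $K$ to the simplicial complex $K^{(r)}$ obtained by deleting all simplices of dimension $\ge r$ that are not of the form $\mu_j(\sigma)$ for some $\sigma$ with $\dim\sigma\ge r$ — or, more cleanly, by a direct induction that strips off simplices one at a time in decreasing order of $\succ$. I would actually argue directly, mimicking the proof of Proposition~\ref{propos_sigma_tau} but without the flag hypothesis, since the extra closure assumption ``for all dimensions $\ge r$'' is exactly the conclusion that Lemma~\ref{lem_flag} extracts from the flag hypothesis. So the whole point is that Lemma~\ref{lem_sigma_tau} is the ``engine'' and Proposition~\ref{propos_sigma_tau} follows from it by combining with Lemma~\ref{lem_flag}; but to make the exposition self-contained I would prove Lemma~\ref{lem_sigma_tau} first.

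First I would set up the collapsing sequence. Consider the set $N$ of all simplices $\sigma\in K$ with $\dim\sigma\ge r$ and $\sigma\notin M(K)$, i.e.\ $\sigma$ is not the largest facet of any simplex of $K$; equivalently (using hereditariness) $\sigma$ is not of the form $\mu_j(\eta)$ for any $\eta$ with $j=\dim\sigma<\dim\eta$. I claim the pairs $\bigl(\lambda(\sigma),\sigma\bigr)$, for $\sigma$ ranging over those $\sigma\in N$ that are \emph{not} maximal in $K$, together with possibly leaving some top simplices as free pairs, organize $K$ into a sequence of elementary collapses down to a subcomplex of dimension $<r$. By Lemma~\ref{lem_lambda}, for $\sigma\in M(K)$ we have $\mu(\lambda(\sigma))=\sigma$ and $\lambda(\sigma)\notin M(K)$; the dual role here is that for $\sigma\notin M(K)$ (and $\sigma$ not maximal), $\lambda(\sigma)$ is a well-defined simplex one dimension up whose largest facet I must identify. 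The key computation is: if $\dim\sigma\ge r$, $\sigma\notin M(K)$, and $\sigma$ is not maximal, then $\mu(\lambda(\sigma))=\sigma$. Indeed suppose $\mu(\lambda(\sigma))=\tau\ne\sigma$; then $\tau$ is another facet of $\lambda(\sigma)$ containing $\mu_{r-1}(\sigma)$-type data, $\dim\tau=\dim\sigma\ge r$, $\mu(\sigma)=\mu(\tau)=\mu_{\dim\sigma-1}(\lambda(\sigma))$ — wait, one must be careful — so instead I use the hypothesis: $\sigma$ and $\tau$ are distinct facets of the simplex $\lambda(\sigma)$, hence $\sigma\cup\tau=\lambda(\sigma)$, and $\mu(\sigma),\mu(\tau)$ are facets of $\sigma\cap\tau$; if $\mu(\sigma)=\mu(\tau)$ then by Lemma~\ref{lem_sigma_cup_tau} (applied to the larger of $\sigma,\tau$) we get $\mu(\lambda(\sigma))$ is that larger one, contradicting minimality of $\lambda(\sigma)$ unless it is $\sigma$ — this needs the hypothesis to force $\mu(\sigma)=\mu(\tau)$. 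Here is where I expect the real work: showing $\mu(\sigma)=\mu(\tau)$, which should follow because $\mu(\sigma)$ and $\mu(\tau)$ are both facets of the same $(\dim\sigma-1)$-simplex $\sigma\cap\tau=\mu_{\dim\sigma-1}(\lambda(\sigma))$, hence comparable, and hereditariness plus the choice of $\lambda$ pins them down.

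Granting the pairing $\sigma\mapsto(\lambda(\sigma),\sigma)$ is well defined and injective on $\{\sigma\in N:\sigma\text{ not maximal}\}$, with all these $\lambda(\sigma)$ distinct and themselves lying in $N$ (this is the analog of $\lambda(\sigma)\notin M(K)$ in Lemma~\ref{lem_lambda}, extended across dimensions), I would then order these free pairs by decreasing $\succ$ of the top element $\lambda(\sigma)$ and verify inductively that at each stage the pair being removed is genuinely a free pair of the current complex: $\lambda(\sigma)$ is maximal because every strictly larger simplex has already been removed, and $\sigma$ has $\lambda(\sigma)$ as its unique coface because any other $(\dim\sigma+1)$-coface $\eta'$ of $\sigma$ has $\eta'\succ\lambda(\sigma)$ (minimality of $\lambda$) and $\mu(\eta')\succeq\sigma$; if $\mu(\eta')=\sigma$ then $\sigma\in M(K)$, contradiction, so $\mu(\eta')\succ\sigma$ and an argument as above using the closure hypothesis and Lemma~\ref{lem_sigma_cup_tau} shows $\eta'$ was removed earlier. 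After exhausting all such pairs and all maximal simplices of dimension $\ge r$ in $N$ (removed as free pairs with one of their facets), what remains is a subcomplex all of whose maximal simplices either had dimension $<r$ originally or became so, i.e.\ a subcomplex of dimension $<r$. The only subtlety I foresee beyond bookkeeping is the verification that the matching covers \emph{every} simplex of dimension $\ge r$: a simplex of dimension $\ge r$ is either in $M(K)$ (hence equals $\mu(\lambda(\cdot))$ of the unique thing below, i.e.\ it is the bottom of a pair) or not in $M(K)$ (hence the bottom of the pair $(\lambda(\sigma),\sigma)$, or maximal), and these two cases are exactly complementary; Lemma~\ref{lem_lambda} handles the $M(K)$ side and the dual argument above handles the other side.

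Finally, Proposition~\ref{propos_sigma_tau} follows immediately: by Lemma~\ref{lem_flag}, the flag hypothesis plus closure in dimension $r$ upgrades to closure in all dimensions $\ge r$, which is exactly the hypothesis of Lemma~\ref{lem_sigma_tau}. The main obstacle, as flagged, is the careful case analysis identifying $\mu(\lambda(\sigma))$ and confirming each candidate pair is free in the partially-collapsed complex; everything else is the same hereditary-ordering combinatorics already exhibited in Lemmas~\ref{lem_sigma_cup_tau}--\ref{lem_lambda}.
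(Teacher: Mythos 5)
Your matching goes in the wrong direction, and this is a genuine gap rather than a bookkeeping issue. You pair each non-maximal $\sigma\notin M(K)$ with $\dim\sigma\ge r$ as the \emph{bottom} of a free pair whose top is its minimal coface $\lambda(\sigma)$, and you explicitly grant that this assignment is injective (``all these $\lambda(\sigma)$ distinct''). That already fails in the simplest situation where the hypothesis of the lemma holds: let $K$ be the full simplex on $r+2$ vertices (every union of simplices is a simplex, so the hypothesis is vacuous, for any hereditary ordering), and let $\tau$ be its top-dimensional simplex. Every facet $\sigma$ of $\tau$ has $\tau$ as its unique coface, so $\lambda(\sigma)=\tau$ for all of them, and every facet except the single one equal to $\mu(\tau)$ lies outside $M(K)$, since a facet of $\tau$ can only be the largest facet of a coface and $\tau$ is its only coface. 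Thus $r+1\ge 2$ distinct $r$-dimensional simplices in your index set are assigned the same top $\tau$, so they cannot all be removed in free pairs of the form $(\lambda(\sigma),\sigma)$. Relatedly, your ``key computation'' $\mu(\lambda(\sigma))=\sigma$ is false for $\sigma\notin M(K)$ (in the example it holds for exactly one facet of $\tau$); Lemma~\ref{lem_lambda} proves it only for $\sigma\in M(K)$. Your final coverage argument also tacitly switches to the opposite pairing: you call a simplex of $M(K)$ ``the bottom of a pair'', although in your scheme bottoms are by definition outside $M(K)$ --- a sign that the two dual matchings are being conflated.

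The matching that works is the downward one: pair each $\sigma\notin M(K)$ with $\dim\sigma\ge r$ with its \emph{largest facet} $\mu(\sigma)$, so that $\sigma$ is the top of the free pair and $\mu(\sigma)\in M(K)$ is the bottom. Injectivity of $\sigma\mapsto\mu(\sigma)$ is exactly where the hypothesis enters: if $\sigma\ne\tau$, both of dimension $\ge r$ and outside $M(K)$, had $\mu(\sigma)=\mu(\tau)$, then $\sigma\cup\tau\in K$ by assumption, and Lemma~\ref{lem_sigma_cup_tau} would make the $\succ$-larger of the two equal to $\mu(\sigma\cup\tau)$, hence an element of $M(K)$, a contradiction. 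The paper removes these pairs $(\sigma_j,\mu(\sigma_j))$ in decreasing $\succ$-order of the $\sigma_j$, verifies that each intermediate set $K_j$ is a subcomplex (this is where Lemmas~\ref{lem_lambda} and~\ref{lem_sigma_cup_tau} and the hypothesis are used, and freeness of each pair in the current complex follows), and then uses Lemma~\ref{lem_lambda} once more to see that every simplex of $M(K)$ of dimension $\ge r$ equals $\mu(\lambda(\tau))$ with $\lambda(\tau)\notin M(K)$, hence has also been removed, giving $\dim K_q<r$. Your plan never invokes the hypothesis at the point where it is indispensable (injectivity and freeness of the pairs), and your treatment of maximal simplices ``removed as free pairs with one of their facets'' is precisely the unproved freeness assertion; so as written the argument does not go through, although its outer shape (remove pairs in $\succ$-order, check freeness at each stage, then bound the dimension of what remains) matches the paper's.
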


\begin{proof}
Let $\sigma_1,\ldots,\sigma_q$ 
be all simplices in~$K\setminus M(K)$ of dimensions greater than or equal to~$r$ ordered so that
$$
\sigma_1\succ\cdots\succ\sigma_q\,.
$$
For $j=0,\ldots,q$, we put
$$
K_j=K\setminus\{\sigma_1,\mu(\sigma_1),\sigma_2,\mu(\sigma_2),\ldots,\sigma_{j},\mu(\sigma_j)\}.
$$
Let us prove  that $K_j$ is a simplicial complex. Assume the converse. Then there exist simplices $\rho$ and $\tau$ such that $\rho$ is a facet of~$\tau$, $\tau\in K_j$, and $\rho\notin K_j$. Since $K$ is a simplicial complex, we have $\rho\in K$. Hence either $\rho=\sigma_i$ or $\rho=\mu(\sigma_i)$, where $1\le i\le j$. 

Assume that $\tau\in M(K)$. By Lemma~\ref{lem_lambda}, $\mu(\lambda(\tau))=\tau$ and $\lambda(\tau)\notin M(K)$. Besides, $\lambda(\tau)\succ\sigma_i$, since $$\dim\lambda(\tau)=\dim\rho+2> \dim\sigma_i.$$ Hence
 $\lambda(\tau)=\sigma_k$ for some $k<i$. Therefore $\tau=\mu(\sigma_k)$. Consequently $\tau\notin K_j$, which yields a contradiction. Thus $\tau\notin M(K)$. Since $\tau\in K_j$, it follows that $\sigma_j\succ\tau$, hence, $\sigma_i\succ\tau$. Since $\dim\tau=\dim\rho+1$, this would be impossible if $\rho$ were $\sigma_i$. Therefore $\rho=\mu(\sigma_i)$. 
 
Since $\rho$ is a facet of~$\tau$, $\rho=\mu(\sigma_i)$, and $\sigma_i\succ\tau$, we obtain that $\rho=\mu(\tau)$. Indeed, if $\mu(\tau)$ did not coincide with~$\rho$, we would have $\mu(\tau)\succ\rho=\mu(\sigma_i)$, hence, $\tau\succ\sigma_i$. We have $\dim\sigma_i=\dim\tau\ge r$ and $\mu(\sigma_i)=\mu(\tau)$. By Lemma~\ref{lem_flag}, $\sigma_i\cup\tau\in K$. By Lemma~\ref{lem_sigma_cup_tau}, we obtain that $\mu(\sigma_i\cup\tau)=\sigma_i$, which is impossible, since $\sigma_i\notin M(K)$. This contradiction proves that $K_j$ is a simplicial complex.

For every $j=0,\ldots,q-1$, we have $K_j=K_{j+1}\cup\{\sigma_j,\mu(\sigma_j)\}$, and both~$K_j$ and~$K_{j+1}$ are simplicial complexes. It follows easily that $(\sigma_j,\mu(\sigma_j))$ is a free pair in~$K_j$. Hence $K_{j+1}$ is obtained from~$K_j$ by the elementary collapse of this free pair. Thus $K=K_0$ collapses on~$K_q$.

Now, let us prove that $\dim K_q<r$. Assume that $K_q$ contains a simplex~$\tau$ of dimension greater than or equal to~$r$. Since all simplices $\sigma_1,\ldots,\sigma_q$ do not belong to~$K_q$, we obtain that $\tau\in M(K)$. By Lemma~\ref{lem_lambda}, $\lambda(\tau)\notin M(K)$ and $\mu(\lambda(\tau))=\tau$. Since $\dim\lambda(\tau)>r$, we have $\lambda(\tau)=\sigma_j$ for some~$j$. Then the simplex $\tau=\mu(\sigma_j)$ does not belong to~$K_q$, which yields a contradiction. Hence $\dim K_q<r$.
\end{proof}

Proposition~\ref{propos_sigma_tau} follows immediately from Lemmas~\ref{lem_flag} and~\ref{lem_sigma_tau}.

\section{Simplicial complexes $\CK(G,\varkappa)$}\label{section_KGkappa}

Let $G=(g_{uv})_{1\le u,v\le m}$ be a complex matrix of size $m\times m$ with units on the diagonal. Let $\varkappa$ be a positive number. Consider the graph $\Gamma(G,\varkappa)$ on the vertex set~$[m]=\{1,\ldots,m\}$ such that $\{u,v\}$ is an edge of~$\Gamma(G,\varkappa)$ if and only if $|g_{uv}-1|<\varkappa$. Let $\CK(G,\varkappa)$ be the \textit{clique complex\/} of~$\Gamma(G,\varkappa)$. This means that  $\CK(G,\varkappa)$ is the simplicial complex on the vertex set~$[m]$ such that a subset $\sigma\subseteq[m]$ belongs to~$\CK(G,\varkappa)$ if and only if the elements of~$\sigma$ are pairwise joined by edges of~$\Gamma(G,\varkappa)$.

It is not hard to see that the simplicial complexes~$\CK(G,\varkappa)$ have very specific properties if $G$ is the Gram matrix of points $\ba_1,\ldots,\ba_m\in\X^n$, where $\X^n$ is either~$\bS^n$ or~$\Lambda^n$. 
Indeed, if $\X^n=\bS^n$, then $g_{uv}=\cos\dist_{\bS^n}(\ba_u,\ba_v)$. Assume that the number $\varkappa$ is chosen so that, for any~$u$ and~$v$, we have either $|g_{uv}-1|>\varkappa$ or $|g_{uv}-1|<\varkappa/4$. Then it follows easily from the triangle inequality and the inequality
$$
1-\cos 2x\le 4(1-\cos x)
$$
that $\{u,v\}$ is an edge of $\Gamma(G,\varkappa)$ whenever there is a~$w$ such that $\{u,w\}$ and $\{v,w\}$ are edges of~$ \Gamma(G,\varkappa)$. Similarly, if $\X^n=\Lambda^n$, then $g_{uv}=\cosh\dist_{\Lambda^n}(\ba_u,\ba_v)$. Assume that the number $\varkappa$ is chosen so that $\varkappa<1$ and for any~$u$ and~$v$, we have either $|g_{uv}-1|>\varkappa$ or $|g_{uv}-1|<\varkappa/6$. It is easy to see that 
$$
\cosh 2x-1\le 6(\cosh x-1)
$$
whenever $\cosh x-1\le 1$. This inequality and the triangle inequality in~$\Lambda^n$ again imply
that $\{u,v\}$ is an edge of $\Gamma(G,\varkappa)$ whenever there is a~$w$ such that $\{u,w\}$ and $\{v,w\}$ are edges of~$ \Gamma(G,\varkappa)$. Hence, in both cases, $\Gamma(G,\varkappa)$ is the disjoint union of complete graphs. Therefore $\CK(G,\varkappa)$ is the disjoint union of simplices.

If $G$ is the Gram matrix of arbitrary vectors $\ba_1,\ldots,\ba_m\in Q$, then no analog of the triangle inequality holds. Actually, for~$G$ we can obtain any symmetric matrix with units on the diagonal of rank less than or equal to~$n+1$. Nevertheless, it turns out that the bound on the rank of~$G$ still implies strong restrictions on the topology of the complex~$\CK(G,\varkappa)$, provided that $\varkappa<1$ and for any $u,v\in[m]$, we have either $|g_{uv}-1|>\varkappa$ or $|g_{uv}-1|<\varkappa/c$ for some sufficiently large constant $c>0$.

\begin{theorem}\label{theorem_KCkappa}
Let $G=(g_{uv})_{1\le u,v\le m}$ be a complex matrix of size $m\times m$ with units on the diagonal such that $\rank G\le 2r$. Suppose that $0<\varkappa<1,$ and for any $u,v\in[m],$ we have either $|g_{uv}-1|>\varkappa$ or $ |g_{uv}-1|<2^{-4(r+1)}\varkappa$. Then the simplicial complex~$\CK(G,\varkappa)$ collapses on a subcomplex of dimension less than~$r$.
\end{theorem}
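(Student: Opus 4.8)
The plan is to put a hereditary ordering on $\CK(G,\varkappa)$ and then quote Proposition~\ref{propos_sigma_tau}. Two things are immediate. First, $\CK(G,\varkappa)$ is a clique complex, hence flag. Second, the gap hypothesis --- every entry satisfies $|g_{uv}-1|>\varkappa$ or $|g_{uv}-1|<\delta$ with $\delta:=2^{-4(r+1)}\varkappa$ --- means that $\{u,v\}$ is an edge of $\Gamma(G,\varkappa)$ precisely when $|g_{uv}-1|<\delta$, so that $\CK(G,\varkappa)$ is really built from the small threshold $\delta$ but comes with a guaranteed forbidden annulus $\delta\le|g_{uv}-1|\le\varkappa$ around each unit entry. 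This annulus is the slack that all the estimates below will consume.

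Next I would define a hereditary ordering $\succ$ on $\CK(G,\varkappa)$ adapted to $G$: order simplices first by dimension, and break ties by a rule built from the edge sizes $|g_{uv}-1|$ together with an auxiliary linear order on $[m]$, arranged so that the $\succ$-largest facet $\mu(\sigma)$ of a simplex $\sigma$ deletes the vertex of $\sigma$ that is most ``peripheral'' with respect to the remaining vertices in the metric of $G$. Checking the two axioms of a hereditary ordering is then a direct, if tedious, verification; all of the substance lies in what ``$\mu(\sigma)=\mu(\tau)$'' is thereby made to mean.

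With such an ordering, Proposition~\ref{propos_sigma_tau} reduces the theorem to the assertion that if $\sigma,\tau\in\CK(G,\varkappa)$ are $r$-dimensional simplices with $\mu(\sigma)=\mu(\tau)$, then $\sigma\cup\tau\in\CK(G,\varkappa)$. Since the complex is flag and $\rho:=\sigma\cap\tau=\mu(\sigma)$ is then a common facet of dimension $r-1$, writing $\{u\}=\sigma\setminus\rho$, $\{v\}=\tau\setminus\rho$, this amounts to showing that $\{u,v\}$ is an edge, i.e. $|g_{uv}-1|<\varkappa$. To bring in the rank bound I would factor $G=C^{t}C$ with $C$ a $(2r)\times m$ matrix (possible since a complex symmetric matrix of rank $\le 2r$ is congruent to $\mathrm{diag}(I_{2r},0)$ after padding), so that the columns $\bc_1,\dots,\bc_m\in\C^{2r}$ lie on the quadric $\langle\bc,\bc\rangle=1$ for the standard non-degenerate complex-bilinear form and $g_{uv}=\langle\bc_u,\bc_v\rangle$. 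The identity $\langle\bc_w-\bc_{w'},\bc_w-\bc_{w'}\rangle=-2(g_{ww'}-1)$ shows that along a clique every difference vector has quadratic form of modulus $<2\delta$ and, on expansion, every pairwise product $<3\delta$: the span of a simplex of $\CK(G,\varkappa)$ is ``almost totally isotropic''. Conversely, if $\{u,v\}$ were a non-edge then $|g_{uv}-1|>\varkappa$ by the gap, which forces $|\langle\bc_u-\bc_{w_0},\bc_v-\bc_{w_0}\rangle|>\varkappa-2\delta$ for every $w_0\in\rho$ --- the two apex directions are honestly non-orthogonal. The role of the $G$-adapted ordering is that ``$\mu(\sigma)=\mu(\tau)=\rho$'' should propagate the almost-isotropy through the $\mu$-chain $\mu_0(\rho)\subset\mu_1(\rho)\subset\dots\subset\mu_{r-1}(\rho)=\rho$ of the common facet, the monotonicity of the chain keeping the accumulated error a geometric series whose sum the factor $2^{-4(r+1)}$ is calibrated to dominate after the at most $r+1$ steps. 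The outcome is a configuration that, inside $\C^{2r}$, would force a totally isotropic subspace of dimension greater than $r$ --- equivalently a nonvanishing $(2r+1)\times(2r+1)$ minor of $G$ --- contradicting both $\rank G\le 2r$ and the fact that a non-degenerate symmetric bilinear form on $\C^{2r}$ has Witt index exactly $r$. Hence $\{u,v\}$ is an edge, Proposition~\ref{propos_sigma_tau} applies, and the theorem follows.

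The hard part is the coupling of the ordering and the linear algebra. One must design the tie-breaking rule defining $\succ$ so that ``$\mu(\sigma)=\mu(\tau)=\rho$'' is genuinely restrictive --- pinning the apex vertices down enough that almost-isotropy really does propagate along the $\mu$-chain --- while keeping the error bookkeeping tight enough that a product of $O(r)$ perturbations stays below $2^{-4(r+1)}$. A purely combinatorial (vertex-order) choice of $\succ$ cannot work: for $r\ge 2$ the Gram matrix of the $r+2$ vertices of $\sigma\cup\tau$ alone is completely unconstrained by $\rank G\le 2r$, so the ordering must be arranged to force a larger part of the configuration into play, where the global rank bound can actually bite. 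Everything else --- flagness, the reduction of the two thresholds, the Witt-index estimate, and the collapse itself --- is either routine or already supplied by Proposition~\ref{propos_sigma_tau}.
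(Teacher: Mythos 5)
Your reduction is the right one and matches the paper's frame: $\CK(G,\varkappa)$ is flag, so by Proposition~\ref{propos_sigma_tau} everything comes down to showing that two $r$-simplices $\sigma,\tau$ with $\mu(\sigma)=\mu(\tau)$ satisfy $\sigma\cup\tau\in\CK(G,\varkappa)$, i.e.\ that the two apex vertices are joined by an edge. You also correctly diagnose the two essential points: the hereditary ordering must be built from the quantities $|g_{uv}-1|$ rather than from a vertex order alone, and the Gram matrix of the $r+2$ vertices of $\sigma\cup\tau$ by itself is unconstrained by $\rank G\le 2r$, so additional vertices must be dragged into the argument. But the proposal stops exactly where the proof begins: you never specify the tie-breaking rule, never exhibit the extra vertices, and the final contradiction is only gestured at. The ``propagate almost-isotropy along the $\mu$-chain with a geometric series of errors, then contradict the Witt index of $\C^{2r}$'' step is not an argument: approximate isotropy of a span does not bound its dimension by the Witt index without a quantitative non-degeneracy statement, and your parenthetical identification of ``a totally isotropic subspace of dimension $>r$'' with ``a nonvanishing $(2r+1)\times(2r+1)$ minor of $G$'' is not correct --- the latter contradicts $\rank G\le 2r$ directly and is what one actually needs, but nothing in the sketch produces $2r+1$ relevant vertices, since $\sigma\cup\tau$ only has $r+2$.

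What the paper does at this point is concrete and is the real content of the theorem. The ordering is defined greedily: among the candidate apexes $\V_\rho$ over a fixed facet $\rho$, the $\succ$-largest new simplex is $\rho\cup\{v_1\}$ where $v_1$ realizes the maximum of $|g_{wz}-1|$ over the remaining candidates, then one recurses; this yields Lemma~\ref{lem_ii}: for each level $\rho_{j-1}=\mu_{j-1}(\sigma)$ of the $\mu$-chain there is an auxiliary vertex $u_j\in\V_{\rho_{j-1}}(v_j)$ dominating all pairs in $\V_{\rho_{j-1}}(v_j)\cup\{v_j\}$. These $u_1,\dots,u_{r-1}$ (generally outside $\sigma\cup\tau$), together with $v_0,\dots,v_r$ and the apex $u_r$ of $\tau$, give $2r+1$ vertices whose Gram minor vanishes because $\rank G\le 2r$. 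Assuming $|g_{u_rv_r}-1|\ge\varkappa$, the domination inequalities force $|q_{u_jv_j}|\ge\varkappa$ for all $j$ while all ``clique'' entries are below $2^{-4(r+1)}\varkappa$; one step of Gaussian elimination (eliminating $v_0$), a permutation, and row/column scalings by the $a_{u_jv_j}$ turn the vanishing minor into a matrix with unit diagonal, entries $<3/2$ above and $<4^{-2r}$ below the diagonal, which Lemma~\ref{lem_tech_nondeg} shows is non-degenerate --- the contradiction. This quantitative triangular-dominance lemma, not a Witt-index count, is what replaces your heuristic, and the constant $2^{-4(r+1)}$ is calibrated against it, not against a geometric series of propagated errors. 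So the proposal is a reasonable plan with the correct skeleton, but the decisive constructions (the ordering, Lemma~\ref{lem_ii}, the choice of the $2r+1$ points, and the determinant estimate) are missing, and the suggested substitute would not close the argument as stated.
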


\begin{cor}\label{cor_KCkappa}
Let $G=(g_{uv})_{1\le u,v\le m}$ be a complex matrix of size $m\times m$ with units on the diagonal such that $\rank G\le 2r$. Then there exists a $\varkappa$ such that 
$$
2^{-2m^2(r+1)}<\varkappa<1
$$
and the simplicial complex~$\CK(G,\varkappa)$ collapses on a subcomplex of dimension less than~$r$.
\end{cor}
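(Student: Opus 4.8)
The plan is to apply Theorem~\ref{theorem_KCkappa}, so the whole task is to find a single value of~$\varkappa$ that simultaneously satisfies the dichotomy hypothesis (for each pair $u,v$, either $|g_{uv}-1|>\varkappa$ or $|g_{uv}-1|<2^{-4(r+1)}\varkappa$) together with the two-sided bound $2^{-2m^2(r+1)}<\varkappa<1$. The key observation is that the set of ``forbidden'' values of~$\varkappa$ — those for which some pair $u,v$ falls into the gap, i.e. $2^{-4(r+1)}\varkappa\le|g_{uv}-1|\le\varkappa$ — is, for each fixed pair, a closed interval of multiplicative width $2^{4(r+1)}$, namely $\bigl[|g_{uv}-1|,\,2^{4(r+1)}|g_{uv}-1|\bigr]$ (and is empty when $g_{uv}=1$). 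There are at most $\binom{m}{2}<m^2/2$ such pairs, so the union of the forbidden intervals spans a multiplicative factor of at most $\bigl(2^{4(r+1)}\bigr)^{m^2/2}=2^{2m^2(r+1)}$.

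First I would discard the trivial case: if $g_{uv}=1$ for all $u\ne v$, then every $\varkappa\in(0,1)$ satisfies the dichotomy hypothesis vacuously (each $|g_{uv}-1|=0<2^{-4(r+1)}\varkappa$), and we may take, say, $\varkappa=1/2$, which clearly lies in the required range once $m\ge 1$. So assume some off-diagonal entry differs from~$1$. Among the pairs with $g_{uv}\ne 1$, let $\delta=\min_{u\ne v,\,g_{uv}\ne1}|g_{uv}-1|>0$. Next I would try to place~$\varkappa$ just \emph{below} $\delta$: if we choose $\varkappa$ with $\varkappa<\delta$, then for every pair either $g_{uv}=1$ (hence $|g_{uv}-1|=0<2^{-4(r+1)}\varkappa$) or $|g_{uv}-1|\ge\delta>\varkappa$, so the dichotomy holds and in fact no pair is an edge whose absence would matter. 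This already handles the dichotomy; what remains is to arrange $2^{-2m^2(r+1)}<\varkappa<1$. If $\delta$ itself is large, say $\delta>2^{-2m^2(r+1)}$, just take $\varkappa=\min(\delta/2,\,1/2)$ — wait, one must be slightly careful, because halving $\delta$ might still leave $\varkappa$ too small if $\delta$ is already close to the lower threshold; instead take $\varkappa$ equal to the midpoint (geometric or arithmetic) of $2^{-2m^2(r+1)}$ and $\min(\delta,1)$, which works provided $\delta>2^{-2m^2(r+1)}$.

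The remaining — and only genuinely delicate — case is $\delta\le 2^{-2m^2(r+1)}$, i.e. some $|g_{uv}-1|$ is extremely small but nonzero. Here placing $\varkappa$ just below $\delta$ is no longer allowed. The fix is the counting argument above: consider the open interval $I=\bigl(2^{-2m^2(r+1)},1\bigr)$ on a logarithmic scale, of multiplicative length $2^{2m^2(r+1)}$. Each pair with $g_{uv}\ne1$ excludes from $I$ a subinterval of multiplicative length at most $2^{4(r+1)}$ (the intersection of $I$ with $[\,|g_{uv}-1|,2^{4(r+1)}|g_{uv}-1|\,]$). Since there are fewer than $m^2/2$ such pairs, the total excluded multiplicative length is strictly less than $2^{4(r+1)\cdot m^2/2}=2^{2m^2(r+1)}$, which is the whole multiplicative length of $I$; hence $I$ contains a point $\varkappa$ avoiding all forbidden intervals. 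For that $\varkappa$, every pair satisfies the dichotomy, and by construction $2^{-2m^2(r+1)}<\varkappa<1$. Applying Theorem~\ref{theorem_KCkappa} now gives that $\CK(G,\varkappa)$ collapses on a subcomplex of dimension less than~$r$, as required. The main thing to watch is the bookkeeping with the (strict) inequalities — one must ensure the excluded length is \emph{strictly} less than the length of $I$, which is why the crude bound $\binom{m}{2}<m^2/2$ and the strictness in ``$|g_{uv}-1|>\varkappa$'' are exactly what is needed; no calculation beyond this is involved. $\qed$
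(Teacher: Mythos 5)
Your argument is correct and is essentially the paper's proof: both select $\varkappa$ by a counting/pigeonhole argument on the logarithmic scale so that the gap $[2^{-4(r+1)}\varkappa,\varkappa]$ contains none of the values $|g_{uv}-1|$, using $\binom{m}{2}\cdot 4(r+1)<2m^2(r+1)$, and then invoke Theorem~\ref{theorem_KCkappa}. Your preliminary case split (all $g_{uv}=1$, or $\delta$ large) is harmless but unnecessary, since the final covering argument already handles every case.
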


\begin{proof}
Consider the interval $I=\bigl(-(2m^2+4)(r+1),0\bigr)$ and consider the $m(m-1)/2$ points $\log_{2}|g_{uv}-1|\in\R\cup\{-\infty\}$, $u<v$. Obviously,  $I$ contains a closed interval~$[\alpha,\beta]$ of length~$4(r+1)$ such that $[\alpha,\beta]$ contains none of the points~$\log_{2}|g_{uv}-1|$. We put $\varkappa=2^{\beta}$. Then $2^{-2m^2(r+1)}<\varkappa<1$ and none of the numbers $|g_{uv}-1|$ belongs to~$[2^{-4(r+1)}\varkappa,\varkappa]$. By Theorem~\ref{theorem_KCkappa}, $\CK(G,\varkappa)$ collapses on a subcomplex of dimension less than~$r$.
\end{proof}

\begin{remark}
Theorem~\ref{theorem_KCkappa}  can be regarded as a `quantitative' (and, at the same time, non-Euclidean) version of Main Lemma in~\cite[Sect.~5]{Gai12}. This Main Lemma concerns \textit{places\/} of the field of rational functions in the vertex coordinates of the polyhedron. To each such place~$\varphi$ corresponds the graph~$\Gamma_{\varphi}$ such that $\{u,v\}$ is an edge of~$\Gamma_{\varphi}$ if and only if the length~$\ell_{uv}$ is finite at~$\varphi$. Then Main Lemma claims that the clique complex~$\CK_{\varphi}$ of~$\Gamma_{\varphi}$ collapses on a subcomplex of dimension less than or equal to~$[n/2]$, where $n$ is the dimension of the polyhedron. In Theorem~\ref{theorem_KCkappa},  we in a sense replace the word `finite' with the words `sufficiently small' and the word `infinite' with the words `sufficiently large'.
\end{remark}

In the rest of this section, we shall prove Theorem~\ref{theorem_KCkappa}.

Let us introduce a special hereditary ordering~$\succ$ on~$\CK=\CK(G,\varkappa)$. Since we want the ordering~$\succ$ to be hereditary, we must put $\sigma\succ\tau$ whenever $\dim\sigma>\dim\tau$. So we need to order $s$-dimensional simplices for every~$s$. We shall do it consecutively for $s=0,\ldots,\dim \CK$. 

First, we choose an arbitrary total ordering~$\succ$ on the set of vertices of~$\CK$.

Further, we proceed by induction on dimension. 
Suppose that we have already determined the ordering~$\succ$ on the set of $(s-1)$-dimensional simplices of~$\CK$. Construct the ordering~$\succ$ on the set of $s$-dimensional simplices of~$\CK$. Since we want the ordering~$\succ$ to be hereditary, we must put $\sigma\succ\tau$ whenever $\dim\sigma=\dim\tau=s$ and $\mu(\sigma)\succ\mu(\tau)$. 

Now, for each $(s-1)$-simplex $\rho\in \CK$, we need to order $s$-dimensional simplices $\sigma\in \CK$ with $\mu(\sigma)=\rho$. Each such simplex $\sigma$ has the form $\rho\cup\{v\}$. Denote by~$\V_{\rho}$ the set of all vertices $v\notin\rho$ such that $\rho\cup\{v\}\in\CK$ and $\mu(\rho\cup\{v\})=\rho$. Let~$t$ be the cardinality of~$\V_{\rho}$. 
We shall successively denote the elements of~$\V_{\rho}$ by $v_1,\ldots,v_t$ in the following way. Suppose that the vertices $v_1,\ldots,v_{j-1}$ have already been chosen, and $j<t$. 
Consider all values $|g_{wz}-1|$, where $w,z\in \V_{\rho}\setminus\{v_1,\ldots,v_{j-1}\}$ and $w\ne z$, and choose the maximum of them. Let the maximum be attained at a pair $(w^0,z^0)$. (If the maximum is attained at several pairs, then we take for $(w^0,z^0)$ any of them.) Then we put $v_j=w^0$. Finally, the unique vertex in~$\V_{\rho}\setminus\{v_1,\ldots,v_{t-1}\}$ is taken for $v_t$. Now, $\rho\cup\{v_j\}$, $j=1,\ldots,t$, are exactly all simplices in $\CK$  with maximal facet~$\rho$. We put
\begin{equation}\label{eq_rho_vj}
\rho\cup\{v_1\}\succ\rho\cup\{v_2\}\succ\cdots\succ\rho\cup\{v_t\}.
\end{equation}
This completes the construction of the ordering~$\succ$.

For a vertex $v\in \V_{\rho}$, we denote by $\V_{\rho}(v)$ the subset of~$\V_{\rho}$ consisting of all vertices~$u$ such that $\sigma\cup\{v\}\succ\sigma\cup\{u\}$. 

\begin{lem}\label{lem_ii}
Suppose that $\rho\in \CK,$ $\rho\ne\emptyset,$ $v\in \V_{\rho},$ and\/ $\V_{\rho}(v)\ne \emptyset$.  Then there is a vertex $u\in \V_{\rho}(v)$ such that
$$
|g_{wz}-1|\le |g_{uv}-1|
$$
for all $w,z\in \V_{\rho}(v)\cup\{v\}$.
\end{lem}

\begin{proof}
As above, let $v_1,\ldots,v_t$ be all elements of the set $\V_{\rho}$ numbered so that \eqref{eq_rho_vj} holds true. Assume that $v=v_j$. Then $\V_{\rho}(v)=\{v_{j+1},\ldots,v_t\}$. Since $\V_{\rho}(v)\ne\emptyset$, we have $j<t$. By the construction of the ordering~$\succ$, the maximum of all values $|g_{wz}-1|$ such that $w,z\in \V_{\rho}(v)\cup\{v\}=\{v_{j},\ldots,v_t\}$ and $w\ne z$ is attained at a pair $(w^0,z^0)$ such that $w^0=v$. Then $u=z^0$ is the required vertex.
\end{proof}

\begin{lem}\label{lem_tech_nondeg}
Let $B=(b_{jk})_{1\le j,k\le n}$ be a complex matrix with units on the diagonal such that $|b_{jk}|<3/2$ whenever $j<k$ and $|b_{jk}|<4^{-n}$ whenever $j>k$. Then the matrix~$B$ is non-degenerate.
\end{lem}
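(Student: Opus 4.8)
The plan is to estimate the determinant $\det B$ directly via its Leibniz expansion and show that the contribution of the identity permutation dominates all the others. Write
$$
\det B=\sum_{\pi\in S_n}\mathrm{sgn}(\pi)\prod_{j=1}^n b_{j,\pi(j)}.
$$
The term for $\pi=\mathrm{id}$ equals $\prod_j b_{jj}=1$. The strategy is to prove that the sum of the absolute values of all remaining terms is strictly less than $1$, which forces $\det B\ne 0$.

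First I would classify a non-identity permutation $\pi$ by the number $k=k(\pi)$ of indices $j$ with $\pi(j)<j$ (the number of ``descents below the diagonal''); since $\pi\ne\mathrm{id}$ has at least one index mapped strictly below the diagonal and at least one mapped strictly above, we have $1\le k\le n-1$, and the number of indices $j$ with $\pi(j)>j$ is also at least $k$ (in fact at least $k$, since a permutation moves as many points up as down when counted with multiplicity of position — more precisely, $\#\{j:\pi(j)>j\}\ge k$ because $\pi$ restricted to the complement of the fixed points has no fixed points). For such a $\pi$,
$$
\Bigl|\prod_{j=1}^n b_{j,\pi(j)}\Bigr|
=\prod_{\pi(j)<j}|b_{j,\pi(j)}|\cdot\prod_{\pi(j)>j}|b_{j,\pi(j)}|
<(4^{-n})^{k}\cdot(3/2)^{k}\le\Bigl(\tfrac{3}{2}\cdot 4^{-n}\Bigr)^{k}\le\tfrac{3}{2}\cdot 4^{-n},
$$
using $|b_{jk}|<4^{-n}$ below the diagonal, $|b_{jk}|<3/2$ above, $1\le k$, and the fact that the number of strict-below entries equals the number of strict-above entries for a permutation. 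Then I would bound the number of non-identity permutations crudely by $n!<n^n\le 4^{n}$ (valid for all $n\ge 1$, and one checks $n^n\le 4^n$ fails for $n\ge 5$, so instead use $n!\le n^n$ and argue more carefully, or simply use $n!<4^n\cdot(\text{something})$). Hence
$$
\Bigl|\sum_{\pi\ne\mathrm{id}}\mathrm{sgn}(\pi)\prod_j b_{j,\pi(j)}\Bigr|
<(n!-1)\cdot\tfrac{3}{2}\cdot 4^{-n}<n!\cdot\tfrac{3}{2}\cdot 4^{-n}\le\tfrac{3}{2}\cdot\frac{n!}{4^n}<1,
$$
the last inequality holding because $n!<4^n$ is false in general — so the real point is that for each non-identity $\pi$ we actually gain a full factor $4^{-n}$, and there are fewer than $4^n$ permutations in a better counting: the number of permutations with exactly $k$ sub-diagonal indices is at most $\binom{n}{k}^2 k!\le n^{2k}$, and summing $\sum_{k\ge 1} n^{2k}(3/2\cdot 4^{-n})^k$ gives a geometric series with ratio $\le 3n^2/(2\cdot 4^n)<1/2$ for all $n\ge 1$, so the total is at most $(3n^2/4^n)<1$.

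I expect the only real obstacle to be the bookkeeping in the last paragraph: one must choose the counting of permutations by $k$ carefully so that the per-$k$ bound $n^{2k}\cdot(3/2)^k\cdot 4^{-nk}$ sums to something $<1$. The clean way is: each non-identity $\pi$ contributes $<(3/2)^k 4^{-nk}$ with $k\ge 1$, and the number of $\pi$ with a given set of $k$ sub-diagonal positions and $k$ super-diagonal positions is at most $n^{2k}$; so $|\det B-1|<\sum_{k=1}^{n-1} n^{2k}(3/(2\cdot 4^n))^k\le\sum_{k\ge1}\bigl(3n^2/(2\cdot 4^n)\bigr)^k$, and since $3n^2\le 4^n$ for all $n\ge 1$ (check $n=1,2,3,4$ by hand, then $4^n$ outgrows $3n^2$), this ratio is $\le 1/2$ and the sum is $\le 1<1$ strictly once one notes the ratio is actually $<1/2$. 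Therefore $|\det B-1|<1$, so $\det B\ne 0$ and $B$ is non-degenerate. $\square$
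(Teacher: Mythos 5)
Your overall strategy (Leibniz expansion, identity term dominates) is different from the paper's proof, which instead proves a slightly strengthened statement by induction on $n$ using one step of Gaussian elimination; that route avoids any permutation counting. As written, however, your argument has a genuine gap, concentrated in the two combinatorial claims that carry all the weight. First, it is not true that a permutation uses equally many sub-diagonal and super-diagonal entries: the cycle $1\to2\to\cdots\to p\to1$ uses $p-1$ super-diagonal entries and only one sub-diagonal entry. So the correct per-permutation bound is $(4^{-n})^{k}(3/2)^{a}$, where $k$ is the number of sub-diagonal indices and $a$ the number of super-diagonal indices, and $a$ can be as large as $n-k$; your bound $(3/2)^{k}(4^{-n})^{k}$ undercounts the dangerous factor $(3/2)^{a}$ by as much as $(3/2)^{n-2}$. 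Second, the count ``at most $\binom{n}{k}^{2}k!\le n^{2k}$ permutations with exactly $k$ sub-diagonal indices'' is false: the number of permutations of $[n]$ with exactly one sub-diagonal index (anti-excedance) is the Eulerian number $A(n,1)=2^{n}-n-1$, which exceeds $n^{2}$ already for $n\ge5$. Consequently the geometric series with ratio $3n^{2}/(2\cdot4^{n})$ does not bound $|\det B-1|$, and the inequality you actually verify, $3n^{2}\le 4^{n}$, is not the one the argument needs.

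The approach is salvageable, but it needs the corrected weight and a real bound on Eulerian numbers: grouping permutations by the number $k\ge1$ of sub-diagonal indices, one has the term-wise bound $A(n,k)\,(3/2)^{n-k}4^{-nk}$, and using $A(n,k)\le(k+1)^{n}$ this is at most $\bigl((k+1)\tfrac32\,4^{-k}\bigr)^{n}$, whose sum over $k\ge1$ is less than $1$ for $n\ge3$ (small $n$ checked by hand). None of this bookkeeping is in your proposal, and without it the key step fails. The paper's inductive elimination argument, by contrast, only needs to track how the thresholds $3/2$ and $4^{-n}$ degrade to $3/2+4^{-n+1}$ and $4^{-n+1}$ after clearing the first column, which is why it strengthens the hypothesis to $1-4^{-n}<|b_{jj}|<3/2+4^{-n}$ before inducting.
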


\begin{proof}
We shall prove the following stronger assertion:

\textit{Let $B=(b_{jk})_{1\le j,k\le n}$ be a complex matrix such that $1-4^{-n}<|b_{jj}|<3/2+4^{-n}$ for all~$j$, $|b_{jk}|<3/2+4^{-n}$ whenever $j<k,$ and $|b_{jk}|<4^{-n}$ whenever $j>k$. Then the matrix~$B$ is non-degenerate.}

Let us prove this assertion by induction on~$n$. If $n=1$, then it is obviously true. Assume that this assertion is true for $n-1$ and prove it for $n$.

For every $j=2,\ldots,n$, we subtract from the $j$th row of~$B$ the first row of~$B$ multiplied by $b_{j1}/b_{11}$. Let $B'=(b'_{jk})$ be the matrix obtained. Then all entries of the first column of~$B'$, except for $b_{11}'=b_{11}$, are equal to zero. If $2\le j,k\le n$, we have $b_{jk}'=b_{jk}-b_{1k}b_{j1}/b_{11}$. Since $|b_{11}|>2/3$, $|b_{1k}|<2$, and $|b_{j1}|<4^{-n}$, we obtain that $|b_{jk}'-b_{jk}|<3\cdot 4^{-n}$. Hence $1-4^{-n+1}<|b_{jj}'|<3/2+4^{-n+1}$ for all~$j$, $|b'_{jk}|<3/2+4^{-n+1}$ whenever $j<k$ and $|b'_{jk}|<4^{-n+1}$ whenever $j>k$. Therefore, by the inductive assumption, $B'$ is non-degenerate. Thus $B$ is also non-degenerate.
\end{proof}

\begin{lem}\label{lem_C_sigma_tau}
Let the triple $(G,\varkappa,r)$ be as in Theorem~\ref{theorem_KCkappa}, let $\CK=\CK(G,\varkappa)$, and let~$\succ$ be the hereditary ordering on~$\CK$ constructed above.
Then for any two simplices $\sigma,\tau\in \CK$  such that $\dim\sigma=\dim\tau= r$ and $\mu(\sigma)=\mu(\tau),$ we have $\sigma\cup\tau\in \CK$. 
\end{lem}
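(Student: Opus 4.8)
The statement concerns two $r$-dimensional simplices $\sigma,\tau\in\CK$ with a common maximal facet $\mu(\sigma)=\mu(\tau)=\rho$, and we must show $\sigma\cup\tau\in\CK$, i.e.\ that the two vertices $u\notin\sigma\cap\tau$ and $v\notin\sigma\cap\tau$ opposite $\rho$ are joined by an edge of $\Gamma(G,\varkappa)$, that is $|g_{uv}-1|<\varkappa$. Since $\varkappa<1$ and every off-diagonal entry of $G$ is either $>\varkappa$ in modulus or $<2^{-4(r+1)}\varkappa$, it suffices to rule out $|g_{uv}-1|>\varkappa$; so assume for contradiction that $\{u,v\}$ is a non-edge. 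The plan is to extract from $\sigma$, $\tau$, and the edge structure a $(2r+2)\times(2r+2)$ (or at least large enough) submatrix of $G$ which, after reordering rows and columns, satisfies the hypotheses of Lemma~\ref{lem_tech_nondeg} (units on the diagonal, entries of modulus $<3/2$ above the diagonal, entries of modulus $<4^{-N}$ below the diagonal with $N=2r+2$), forcing it to be non-degenerate and contradicting $\rank G\le 2r$.

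\textbf{Building the submatrix.} Write $\rho=\mu(\sigma)=\mu(\tau)$, and note $\dim\rho=r-1$, so $\rho$ has $r$ vertices, $\sigma=\rho\cup\{v\}$, $\tau=\rho\cup\{u\}$. I would order the vertices of $\rho$ by $\succ$ as $p_1\succ p_2\succ\cdots\succ p_r$, record that $\mu_{j}(\sigma)=\mu_j(\tau)=\mu_j(\rho)$ is an initial segment of this list for $j<r$ (as discussed in the paragraph preceding Lemma~\ref{lem_sigma_cup_tau}), and then use the construction of $\succ$ on vertices of the form $\rho'\cup\{w\}$: the fact that $\sigma=\rho\cup\{v\}$ and $\tau=\rho\cup\{u\}$ both have maximal facet exactly $\rho$ means $u,v\in\V_\rho$; and, after relabelling, $v\in\V_\rho$ with $u\in\V_\rho(v)$ (or vice versa). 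Now apply Lemma~\ref{lem_ii} iteratively, peeling off vertices $v=w_1,w_2,\dots$ from $\V_\rho$: at each stage Lemma~\ref{lem_ii} hands me a vertex $w_{k+1}\in\V_\rho(w_k)$ that maximizes $|g_{wz}-1|$ over the surviving set, so that $|g_{w_k w_{k+1}}-1|$ dominates all later $|g_{w_i w_j}-1|$. The key point of this greedy chain is the usual one from \cite{Gai12}: going \emph{down} the chain, entries are forced to be small because a larger entry $|g_{w_i w_j}-1|$ with $i<j$ would have been selected earlier. Combining the $\succ$-decreasing chain inside $\rho$, the chain $w_1,w_2,\dots$ inside $\V_\rho$, plus $u$, arranged so that the `old' vertices precede the `new' ones, I get an ordered list $x_1,\dots,x_N$ of $N=2r+2$ (or enough) vertices whose Gram submatrix $B=(g_{x_j x_k})$ has, below the diagonal, only entries that are `small' — and `small' here means $<\varkappa\cdot(\text{something})$, which by the dichotomy hypothesis is actually $<2^{-4(r+1)}\varkappa< 4^{-N}$ once $N=2r+2$ — while above the diagonal we only know $|g_{x_jx_k}-1|<1$ when $\{x_j,x_k\}$ is an edge; for non-edges we have to be more careful, and this is exactly where the numerics must be checked.

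\textbf{Closing the argument.} Once $B$ satisfies the hypotheses of Lemma~\ref{lem_tech_nondeg} — diagonal entries $1$, super-diagonal entries of modulus $<3/2$, sub-diagonal entries of modulus $<4^{-N}$ — that lemma gives $\det B\ne 0$, so $\rank G\ge N=2r+2>2r$, contradicting $\rank G\le 2r$. Hence $\{u,v\}$ must be an edge, $|g_{uv}-1|<2^{-4(r+1)}\varkappa<\varkappa$, and $\sigma\cup\tau\in\CK$.

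\textbf{The main obstacle.} The delicate part is the bookkeeping that turns the combinatorial data (the hereditary ordering, the sets $\V_\rho$, $\V_\rho(v)$, and the greedy selections) into a matrix with precisely the below-diagonal smallness needed for Lemma~\ref{lem_tech_nondeg}, while simultaneously controlling the \emph{non-edges} among the chosen vertices. A non-edge $\{x_j,x_k\}$ contributes an entry $g_{x_jx_k}$ with $|g_{x_jx_k}-1|>\varkappa$, which is \emph{not} automatically $<3/2$ in modulus and certainly not $<4^{-N}$; the only way to tolerate it is to guarantee that all such non-edges land strictly below the diagonal and are still bounded by $4^{-N}$ — which cannot be literally true, so the real content must be that the greedy/hereditary construction forces the offending pairs to appear with one index `new' and one `old' in a way that makes $|g_{x_jx_k}-1|$ dominated by an already-small quantity. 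Making this precise — essentially showing that assuming $\{u,v\}$ a non-edge propagates, via the maximality in Lemma~\ref{lem_ii} and the ordering~\eqref{eq_rho_vj}, to a contradiction with the rank bound through a determinant estimate à la Lemma~\ref{lem_tech_nondeg} — is the technical heart, and I expect it to occupy the bulk of the actual proof; the constant $2^{-4(r+1)}$ in the hypothesis is exactly what is needed to absorb the at most $\binom{2r+2}{2}$-fold accumulation of errors in that estimate.
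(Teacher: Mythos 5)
Your overall target is the right one (produce a $(2r+1)\times(2r+1)$ minor of $G$ that Lemma~\ref{lem_tech_nondeg} forces to be nonzero, contradicting $\rank G\le 2r$), but the proposal has two genuine gaps, and they are exactly the two mechanisms that make the paper's proof work. First, your supply of vertices is wrong. You propose to iterate Lemma~\ref{lem_ii} inside the single set $\V_{\rho}$, $\rho=\mu(\sigma)=\mu(\tau)$, and to combine the resulting chain with the vertices of $\rho$. But $\V_{\rho}$ may consist of nothing but the two apexes $u$ and $v$, in which case your list has at most $r+2<2r+1$ vertices and no minor of order $2r+1$ ever appears, so the rank hypothesis is never contradicted. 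The paper instead descends the whole filtration $\mu_0(\sigma)\subset\mu_1(\sigma)\subset\cdots\subset\mu_{r-1}(\sigma)=\rho$: writing $\sigma=\{v_0,\ldots,v_r\}$ with $\rho_j=\mu_j(\sigma)=\{v_0,\ldots,v_j\}$, it applies Lemma~\ref{lem_ii} once at each level $j=1,\ldots,r-1$ to the set $\V_{\rho_{j-1}}(v_j)$ (nonempty because it contains $v_{j+1}$), obtaining one auxiliary vertex $u_j$ per level; together with $u_r$ (the apex of $\tau$) this gives the $2r+1$ vertices $v_0,\ldots,v_r,u_1,\ldots,u_r$. The payoff of choosing $u_j$ at level $j-1$ rather than at the top level is the inclusion $u_k,v_k\in\V_{\rho_{j-1}}(v_j)$ for all $j<k$, which yields the cascade $|q_{u_kv_k}|,|q_{u_jv_k}|,|q_{u_ju_k}|\le|q_{u_jv_j}|$ and, in particular, $|q_{u_jv_j}|\ge|q_{u_rv_r}|\ge\varkappa$ for every $j$ once you assume $\{u_r,v_r\}$ is a non-edge.

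Second, the determinant estimate cannot be closed the way you hope, and you say so yourself: non-edge entries satisfy only $|g_{wz}-1|>\varkappa$ and have no upper bound at all, so no arrangement of "old before new" will place them below the diagonal with modulus $<4^{-N}$, nor even give the $3/2$ bound above the diagonal. The missing idea is a normalization, not better bookkeeping: after one step of Gaussian elimination (eliminating $v_0$, which is joined by edges to all the other chosen vertices, so the elimination perturbs each entry by at most about $2^{-4r-2}\varkappa$), one permutes rows and columns and then divides rows and columns by the dominant entries $a_{u_jv_j}$, each of modulus $>\frac{99}{100}\varkappa$. Relative domination from the cascade above then bounds the above-diagonal entries of the rescaled matrix by $(101/99)^2<3/2$, while the below-diagonal entries come from genuine edges of $\CK$, hence have modulus $<2^{-4r-2}\varkappa$ before rescaling and $<4^{-2r}$ after division by the $\varkappa$-sized dominators; only then does Lemma~\ref{lem_tech_nondeg} apply. (One also has to note, as the paper does, that if the $2r+1$ chosen vertices are not pairwise distinct the big determinant still vanishes because of repeated rows.) Since both the level-by-level choice of the $u_j$ and the rescaling step are absent from your argument, the proposal as written does not prove the lemma.
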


\begin{proof}
We may assume that $\sigma\succ\tau$. For $j=0,\ldots,r$, we put $\rho_j=\mu_j(\sigma)$. (Recall that~$\mu_j(\sigma)$ is the largest with respect to~$\succ$ of all $j$-dimensional faces of~$\sigma$.) Then 
\begin{gather*}
\rho_0\subset\cdots\subset\rho_{r-1}\subset\rho_r=\sigma,\\
\rho_{r-1}=\mu(\sigma)=\mu(\tau)=\sigma\cap\tau.
\end{gather*}
We denote the vertices of~$\sigma$ by~$v_0,\ldots,v_r$ so that $\rho_j=\{v_0,\ldots,v_j\}$, $j=0,\ldots,r$. We denote by $u_r$ the vertex of~$\tau$ opposite to the facet $\mu(\tau)=\rho_{r-1}$; then $\tau=\{v_0,\ldots,v_{r-1},u_r\}$. Since $\sigma\succ\tau$, we have $u_k\in \V_{\rho_{k-1}}(v_k)$.

We need to prove that $|g_{u_rv_r}-1|<\varkappa$. If we prove this, we shall obtain that $\{u_r,v_r\}\in \CK$, hence, $\sigma\cup\tau\in \CK$.  Assume the converse, i.\,e., assume that $|g_{u_rv_r}-1|\ge\varkappa$.

For every $j=1,\ldots,r-1$, we have $v_j\in\V_{\rho_{j-1}}$, since $\mu(\rho_j)=\rho_{j-1}$. Besides, the set $\V_{\rho_{j-1}}(v_j)$ is non-empty, since it contains the vertex~$v_{j+1}$. By Lemma~\ref{lem_ii},  there is a vertex~$u_j\in \V_{\rho_{j-1}}(v_j)$ such that 
\begin{equation}\label{eq_finite_odd}
|g_{wz}-1|\le|g_{u_jv_j}-1|
\end{equation}
for all $w,z\in \V_{\rho_{j-1}}(v_j)\cup\{v_j\}$.
 
We have, 
\begin{equation}\label{eq_determinant}
\left|\,\,\,\,
\begin{matrix}
1&g_{v_0v_1}&g_{v_0v_2}&\cdots&g_{v_0v_r}&\vline & g_{v_0u_1} & g_{v_0u_2}  &\cdots &g_{v_0u_r}\\
g_{v_1v_0}&1&g_{v_1v_2}&\cdots&g_{v_1v_r}& \vline &g_{v_1u_1} & g_{v_1u_2}  &\cdots &g_{v_1u_r}\\
g_{v_2v_0}&g_{v_2v_1}&1&\cdots&g_{v_2v_r}& \vline &g_{v_2u_1} & g_{v_2u_2}  &\cdots &g_{v_2u_r}\\
\vdots &\vdots  & \vdots &\ddots& \vdots& \vline &\vdots &\vdots & \ddots &\vdots\\
g_{v_rv_0}&g_{v_rv_1}&g_{v_rv_2}&\cdots&1& \vline &g_{v_ru_1} & g_{v_ru_2}  &\cdots &g_{v_ru_r}\\
\hline
g_{u_1v_0}&g_{u_1v_1}&g_{u_1v_2}&\cdots& g_{u_1v_r}&\vline  &  1 &g_{u_1u_2}  &  \cdots & g_{u_1u_r}\\
g_{u_2v_0}&g_{u_2v_1}&g_{u_2v_2}&\cdots& g_{u_2v_r}&\vline   &g_{u_2u_1}  &  1 &  \cdots & g_{u_2u_r}\\
\vdots & \vdots & \vdots &\ddots &\vdots & \vline& \vdots &\vdots & \ddots &\vdots\\
g_{u_rv_0}&g_{u_rv_1}&g_{u_rv_2}&\cdots& g_{u_rv_r}&\vline  &g_{u_ru_1}&g_{u_ru_2}  &  \cdots & 1\\
\end{matrix}
\,\,\,\,\right|=0.
\end{equation}
Indeed, if the $2r+1$ vertices $v_0,\ldots,v_r,u_1,\ldots,u_r$ are pairwise distinct, then the determinant in the left-hand side of~\eqref{eq_determinant} is a minor of~$G$ of order $2r+1$. It vanishes, since $\rank G\le 2r$. If two of the vertices $v_0,\ldots,v_r,u_1,\ldots,u_r$ coincide to each other, then the determinant in the left-hand side of~\eqref{eq_determinant} contains two identical rows, hence, vanishes.
 
 For any $w,z\in [m]$, we put $q_{wz}=g_{wz}-1$. Then
 \begin{equation}\label{eq_ineq_q}
 \begin{aligned}
 |q_{wz}|&<2^{-4(r+1)}\varkappa&&\text{if }\{w,z\}\in \CK,\\
 |q_{wz}|&>\varkappa&&\text{if }\{w,z\}\notin \CK.
 \end{aligned}
 \end{equation}
 
Performing the first step of the Gaussian elimination for the determinant in the left-hand side of~\eqref{eq_determinant}, we obtain that
\begin{equation}\label{eq_determinant2}
\left|\,\,\,\,
\begin{matrix}
a_{v_1v_1}&a_{v_1v_2}&\cdots&a_{v_1v_r}& \vline &a_{v_1u_1} & a_{v_1u_2}  &\cdots &a_{v_1u_r}\\
a_{v_2v_1}&a_{v_2v_2}&\cdots&a_{v_2v_r}& \vline &a_{v_2u_1} & a_{v_2u_2}  &\cdots &a_{v_2u_r}\\
\vdots  & \vdots &\ddots& \vdots& \vline &\vdots &\vdots & \ddots &\vdots\\
a_{v_rv_1}&a_{v_rv_2}&\cdots&a_{v_rv_r}& \vline &a_{v_ru_1} & a_{v_ru_2}  &\cdots &a_{v_ru_r}\\
\hline
a_{u_1v_1}&a_{u_1v_2}&\cdots& a_{u_1v_r}&\vline  &  a_{u_1u_1} &a_{u_1u_2}  &  \cdots & a_{u_1u_r}\\
a_{u_2v_1}&a_{u_2v_2}&\cdots& a_{u_2v_r}&\vline   &a_{u_2u_1}  &  a_{u_2u_2} &  \cdots & a_{u_2u_r}\\
\vdots & \ddots &\vdots &\vdots & \vline& \vdots &\vdots & \ddots &\vdots\\
a_{u_rv_1}&a_{u_rv_2}&\cdots& a_{u_rv_r}&\vline  &a_{u_ru_1}&a_{u_ru_2}  &  \cdots & a_{u_ru_r}\\
\end{matrix}
\,\,\,\,\right|=0,
\end{equation}
where 
$$a_{wz}=a_{zw}=g_{wz}-g_{v_0w}g_{v_0z}=
q_{wz}-q_{v_0w}-q_{v_0z}-q_{v_0w}q_{v_0z}.$$
Since $\varkappa<1$,  $r\ge 1$, and $\{v_0,w\}\in \CK$ whenever $w\in\{v_1,\ldots,v_r,u_1,\ldots,u_r\}$, inequalities~\eqref{eq_ineq_q} imply that, for $z,w\in\{v_1,\ldots,v_r,u_1,\ldots,u_r\}$, we have
\begin{gather}\label{eq_ineq_a1}
 |a_{wz}|<2^{-4r-2}\varkappa\qquad\text{if }\{w,z\}\in \CK,\\
 \label{eq_ineq_a2}\frac{99}{100}\,|q_{wz}| < |a_{wz}|<\frac{101}{100}\,|q_{wz}|\qquad\text{if }\{w,z\}\notin \CK.
 \end{gather}  
 
Let $1\le j<k\le r$, and  let $w$ be either $u_k$ or~$v_k$. Since $w\in  \V_{\rho_{k-1}}$, we see that
$\rho_{l}\cup\{w\}\in \CK$ and $\mu(\rho_{l}\cup\{w\})=\rho_{l}$ for  $l=0,\ldots,k-1$. In particular, $\mu(\rho_{j-1}\cup\{w\})=\rho_{j-1}$ and $\mu(\rho_j\cup\{w\})=\rho_j$. Hence $w\in \V_{\rho_{j-1}}$ and $\rho_j\succ\rho_{j-1}\cup\{w\}$. Therefore $w\in \V_{\rho_{j-1}}(v_j)$. Thus $u_k,v_k\in  \V_{\rho_{j-1}}(v_j)$ whenever $1\le j<k\le r$. Hence inequality~\eqref{eq_finite_odd} implies that 
\begin{equation}\label{eq_finite_concrete}
|q_{u_kv_k}|,\,|q_{u_jv_k}|,\,|q_{u_ju_k}|\le |q_{u_jv_j}|,\qquad 1\le j<k\le r.
\end{equation}
In particular, for $j=1,\ldots,r$, we have 
$$
|q_{u_jv_j}|\ge |q_{u_rv_r}|\ge\varkappa,
$$
hence, $\{u_j,v_j\}\notin \CK$. Therefore,
\begin{equation}\label{eq_a_diag}
|a_{u_jv_j}|> \frac{99}{100}\,\varkappa, \qquad j=1,\ldots,r.
\end{equation}
Now, inequalities~\eqref{eq_ineq_a2} and~\eqref{eq_finite_concrete} imply that
\begin{equation}\label{eq_finite_concrete2}
|a_{u_kv_k}|,\,|a_{u_jv_k}|,\,|a_{u_ju_k}|< \frac{101}{99}\,|a_{u_jv_j}|,\qquad 1\le j<k\le r.
\end{equation}
Besides, by~\eqref{eq_ineq_a1}, we have 
\begin{equation}\label{eq_ineq_a_triv}
|a_{v_jv_k}|< 2^{-4r-2}\varkappa,\qquad |a_{v_ju_k}|< 2^{-4r-2}\varkappa,\qquad 1\le j<k\le r,
\end{equation}
since $\{v_j,v_k\}$ and $\{v_j,u_k\}$ are edges of~$\CK$.

Apply to the matrix in the left-hand side of~\eqref{eq_determinant2} the following elementary row and column operations:
\begin{enumerate}
\item Permute the rows by the permutation 
$$
\begin{pmatrix}
1&2&\cdots&r&r+1&r+2&\cdots &2r\\
2r&2r-1&\cdots&r+1&1&2&\cdots &r
\end{pmatrix}.
$$
(The first row of the initial matrix becomes the $(2r)$th row of the matrix obtained, the second row of the initial matrix becomes the $(2r-1)$st row of the matrix obtained, etc.)
\item Permute the columns by the permutation 
$$
\begin{pmatrix}
1&2&\cdots&r&r+1&r+2&\cdots &2r\\
1&2&\cdots&r&2r&2r-1&\cdots &r+1
\end{pmatrix}.
$$
\item For every $j=1,\ldots,r$, multiply the $j$th row of the obtained matrix by~$a_{u_rv_r}/a_{u_jv_j}$, divide the $j$th column of the obtained matrix by $a_{u_rv_r}$, and divide the $(r+j)$th column of the obtained matrix by~$a_{u_{r+1-j}v_{r+1-j}}$.
\end{enumerate}

Let $B=(b_{\alpha\beta})_{1\le \alpha,\beta\le 2r}$ be the obtained matrix. Then for $1\le j,k\le r$, we have
\begin{align*}
&b_{jk}=\frac{a_{u_jv_k}}{a_{u_jv_j}}\,,&
&b_{j,r+k}=\frac{a_{u_ju_{r+1-k}}a_{u_rv_r}}{a_{u_jv_j}a_{u_{r+1-k}v_{r+1-k}}}\,,\\
&b_{r+j,k}=\frac{a_{v_{r+1-j}v_k}}{a_{u_rv_r}}\,,&
&b_{r+j,r+k}=\frac{a_{v_{r+1-j}u_{r+1-k}}}{a_{u_{r+1-k}v_{r+1-k}}}\,.
\end{align*}
In particular, all diagonal entries of~$B$ are equal to~$1$. Inequalities~\eqref{eq_a_diag}--\eqref{eq_ineq_a_triv} easily imply that 
$$
\begin{aligned}
|b_{\alpha\beta}|&<\left(\frac{101}{99}\right)^2<\frac32&&\text{if}\ 1\le \alpha<\beta\le 2r,\\ 
|b_{\alpha\beta}|&<\frac{100}{99}\cdot2^{-4r-2}<4^{-2r}&&\text{if}\ 1\le \beta<\alpha\le 2r.
\end{aligned}
$$
The matrix~$B$ was obtained from a degenerate matrix by elementary row and column operations. Hence  $B$ is degenerate. On the other hand, 
by Lemma~\ref{lem_tech_nondeg}, $B$ is non-degenerate. This contradiction proves that $|g_{u_rv_r}-1|<\varkappa$. Hence $\{u_r,v_r\}\in \CK$. Therefore $\sigma\cup\tau\in \CK$.
\end{proof}

\begin{proof}[Proof of Theorem~\ref{theorem_KCkappa}]
By Lemma~\ref{lem_C_sigma_tau}, for any two simplices $\sigma,\tau\in \CK(G,\varkappa)$  such that $\dim\sigma=\dim\tau= r$ and $\mu(\sigma)=\mu(\tau)$, we have $\sigma\cup\tau\in \CK(G,\varkappa)$. The simplicial complex $\CK(G,\varkappa)$ is a flag simplicial complex, since it is the clique complex of the graph~$\Gamma(G,\varkappa)$. Hence, by Proposition~\ref{propos_sigma_tau}, the simplicial complex~$\CK(G,\varkappa)$ collapses on a subcomplex of dimension less than~$r$.
\end{proof}

\section{Proof of the main theorem}\label{section_proof}

Let $s$ be the number of edges of the simplicial complex $K=\supp\xi$. Recall that the configuration space $\Sigma=\Sigma(\xi,\X^n,\bell)\subset(\X^n)^m$ of all simplicial polyhedra $P\colon K\to\X^n$ of combinatorial type~$\xi$ with the prescribed set of edge lengths~$\bell$ is given by $s$ polynomial equations~\eqref{eq_polynom}.
Recall that $\bS^n=Q\cap \R^{n+1}$ and $\Lambda^n$ is one of the two sheets of the two-sheeted hyperboloid $Q\cap\R^{1,n}$. Hence it is natural to consider the \textit{complexification\/} of~$\Sigma$, i.\,e., the subset $\Sigma_{\C}\subset Q^m$ given by the same $s$ polynomial equations~\eqref{eq_polynom}. 

We shall always identify a point $(\ba_1,\ldots,\ba_m)\in\Sigma_{\C}$ with the matrix~$A$ of size $(n+1)\times m$ with the columns~$\ba_1,\ldots,\ba_m$.  Then the Gram matrix of the vectors~$\ba_1,\ldots,\ba_m$ is the matrix $G=A^tA$.

The subset $\Sigma_{\C}\subset\C^{m(n+1)}$ is a (not necessarily irreducible) affine algebraic variety. We would like to study certain holomorphic functions on (relatively) open subsets $U\subseteq\Sigma_{\C}$. However, the variety~$\Sigma_{\C}$ may be not smooth. So we need to define a holomorphic function on a not necessarily smooth affine algebraic variety.

A function~$\varphi(\bz)$ defined on a relatively open subset $U$ of an affine variety $X\subseteq\C^N$ is called \textit{holomorphic\/} if every point $\bz\in U$ has a neighborhood $W$ in~$\C^N$ such that the function $\psi(\bz)$ can be extended to a  holomorphic function in~$W$, cf.~\cite[Ch.~V, Sect.~B]{GuRo65}. If $U$ consists of regular points of~$X$, then $U$ is a complex analytic manifold, and the above definition of a holomorphic function is equivalent to the standard definition of a holomorphic function on a complex analytic manifold.  We shall need the following version of the  Casorati--Sokhotski--Weierstrass theorem. 

\begin{theorem}
Let $\varphi$ be a non-constant holomorphic function on an irreducible affine variety $X\subseteq\C^N$. Then the image $\varphi(X)$ is dense in~$\C$.
\end{theorem}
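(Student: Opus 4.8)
The plan is to reduce the statement to the classical one-variable case by resolving the singularities just enough to produce a single complex-analytic curve on which $\varphi$ is non-constant, and then apply the usual open mapping / little Picard-type argument. First I would recall that an irreducible affine variety $X\subseteq\C^N$ has a Zariski-dense open subset $X_{\mathrm{reg}}$ consisting of smooth points, and that $X_{\mathrm{reg}}$ is a connected complex manifold (connectedness follows from irreducibility). Since $\varphi$ is holomorphic on all of $X$ and non-constant, its restriction to $X_{\mathrm{reg}}$ is a non-constant holomorphic function on a connected complex manifold. So without loss of generality I may assume $X$ is smooth and connected; the issue of singular points disappears, because density of $\varphi(X_{\mathrm{reg}})$ in $\C$ already gives density of $\varphi(X)$.

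Next I would argue that a non-constant holomorphic function on a connected complex manifold $M$ is an open map onto $\C$ in the following weak sense: its image contains a non-empty open subset of~$\C$. Indeed, since $\varphi$ is non-constant, there is a point $p\in M$ and a tangent direction at which $d\varphi\ne 0$; choosing a small holomorphic disc $g\colon\mathbb{D}\to M$ through $p$ in that direction, the composition $\varphi\circ g$ is a non-constant holomorphic function of one variable with $(\varphi\circ g)'(0)\ne 0$, hence by the classical open mapping theorem $\varphi\circ g(\mathbb{D})$ contains an open neighbourhood of $\varphi(p)$. Thus $\varphi(M)$ has non-empty interior. Now suppose, for contradiction, that $\varphi(M)$ is not dense in~$\C$; then there is a point $c\in\C$ and a radius $\rho>0$ such that the disc of radius $\rho$ about $c$ is disjoint from $\varphi(M)$. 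The function $h=1/(\varphi-c)$ is then holomorphic on all of $M$ and bounded by $1/\rho$.

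The remaining step — and the one I expect to be the only real obstacle — is to upgrade ``bounded holomorphic function on $X$'' to ``constant'', which would then force $\varphi$ to be constant, a contradiction. On a general connected complex manifold bounded holomorphic functions need not be constant, so one genuinely needs the affine-algebraic hypothesis here. The clean way is to invoke a Liouville-type theorem for affine varieties: a bounded holomorphic function on an irreducible affine variety $X\subseteq\C^N$ is constant. One proves this by observing that through any two points of $X$ one can, after a generic linear projection $\C^N\to\C$, find an irreducible affine algebraic \emph{curve} $C\subseteq X$ joining them (slicing $X$ by a generic affine-linear subspace of complementary dimension through the two points, and taking an irreducible component containing both); the normalization $\widetilde{C}$ of the projective closure of $C$ is a compact Riemann surface, and the bounded holomorphic function, pulled back to $\widetilde{C}$ minus the finitely many points over the boundary, extends across those punctures by the Riemann removable singularity theorem, hence is a bounded — therefore constant — holomorphic function on the compact Riemann surface $\widetilde{C}$. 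Since this holds for the curve through any two chosen points, the original function is constant on $X$. Alternatively, and perhaps more in the spirit of the analytic-continuation arguments used elsewhere in this paper, one can fix a point $p_0\in X_{\mathrm{reg}}$ and for each other point $p$ connect $p_0$ to $p$ by a real-analytic path, cover it by coordinate charts, and argue that $h$ is forced to agree with $h(p_0)$ by the identity theorem applied successively — but that variant still ultimately needs the curve-slicing idea to get a compactification on which boundedness yields constancy. Feeding this back: $h$ constant $\Rightarrow$ $\varphi$ constant, contradicting the hypothesis, so $\varphi(X)$ is dense in~$\C$. $\qed$
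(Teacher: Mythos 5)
Your proposal is correct in outline, but it takes a genuinely different route from the paper. The paper does not prove the theorem from first principles: for smooth $X$ it simply cites Propositions~2.4 and~3.8 of Griffiths--King~\cite{GrKi73} (Nevanlinna-theoretic results on affine varieties), and it disposes of singular $X$ with the one-line remark that $X$ contains a smooth principal Zariski-open subset which again carries the structure of an irreducible affine variety. You instead reduce everything to one-variable function theory: the density statement becomes, by the $h=1/(\varphi-c)$ trick, a Liouville-type theorem for bounded holomorphic functions on $X$, which you prove by joining any two points of $X$ by an irreducible algebraic curve, passing to the normalization of its projective closure, and using Riemann's removable singularity theorem plus constancy of holomorphic functions on a compact Riemann surface. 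Your route is self-contained and elementary, at the price of an algebro-geometric input about curves through two points; the paper's route buys brevity at the price of invoking the Griffiths--King machinery. (Your opening observation that $\varphi(X_{\mathrm{reg}})$ contains an open set is not actually needed; the contradiction argument runs on all of $X$.)

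Two points should be tightened. First, the curve-existence step is stated too loosely: a generic affine-linear slice of complementary dimension through two prescribed points may be reducible with no single irreducible component containing both points, so ``taking an irreducible component containing both'' is not justified as written. You should instead invoke, or prove, the standard lemma that any two points of an irreducible quasi-projective variety of positive dimension lie on an irreducible curve inside it (e.g.\ by applying Bertini to the blow-up of the projective closure at the two points), or else work with the connected but possibly reducible hyperplane-section curve and propagate the constant along components, taking care that components might meet only at infinity where $h$ is undefined. Second, since holomorphicity on the possibly singular $X$ means local extendability to an ambient holomorphic function, one should note explicitly that $h=1/(\varphi-c)$ is again holomorphic in this sense (the ambient extension of $\varphi$ omits the value $c$ near points of $X$ because $|\varphi-c|\ge\rho$ on $X$), and that its restriction to the curve pulled back through the normalization is holomorphic; both checks are immediate but belong in the argument. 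With these points made precise, your proof is complete.
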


For a smooth~$X$, this theorem can be found in~\cite[Prop.~2.4 and Prop.~3.8]{GrKi73}. The case of an arbitrary~$X$ follows immediately, since any irreducible affine variety contains a smooth principal Zariski open subset~$Y$, which also has a structure of an irreducible affine variety. 

\begin{cor}\label{cor_bhf}
Any bounded holomorphic function on an irreducible affine variety is constant.
\end{cor}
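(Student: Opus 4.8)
The plan is to deduce this immediately from the preceding Casorati--Sokhotski--Weierstrass-type theorem by a one-line contrapositive argument. Let $\varphi$ be a bounded holomorphic function on an irreducible affine variety $X\subseteq\C^N$, say $|\varphi(\bz)|\le R$ for all $\bz\in X$. Then the image $\varphi(X)$ is contained in the closed disk $\{w\in\C:|w|\le R\}$, hence its closure is bounded and therefore a proper closed subset of~$\C$; in particular $\varphi(X)$ is not dense in~$\C$.

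If $\varphi$ were non-constant, the preceding theorem would force $\varphi(X)$ to be dense in~$\C$, contradicting the previous paragraph. Therefore $\varphi$ is constant. There is no real obstacle here: the entire content has already been supplied by the theorem, and the corollary is obtained by observing that a bounded subset of~$\C$ cannot be dense. (If one wishes to be slightly more careful about the word "holomorphic" on a singular variety, it suffices to restrict $\varphi$ to the smooth principal Zariski open subset~$Y\subseteq X$ used in the proof of the theorem: $Y$ is again irreducible and $\varphi|_Y$ is holomorphic in the usual sense on the complex manifold~$Y$, so $\varphi(Y)$ is either a single point or dense in~$\C$; boundedness rules out the latter, and then $\varphi$ is constant on the dense subset~$Y$ of~$X$, hence on all of~$X$ by continuity.)
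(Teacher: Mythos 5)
Your proof is correct and is exactly the intended argument: the paper derives this corollary immediately from the preceding Casorati--Sokhotski--Weierstrass-type theorem, since a bounded image lies in a disk and hence cannot be dense in~$\C$. Your optional remark about passing to the smooth principal Zariski open subset also matches the paper's own treatment of the singular case.
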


\begin{propos}\label{propos_Phi}
Suppose that $n\ge 3$ and $\ell_{uv}<2^{-m^2(n+4)}$ for all edges~$\{u,v\}$ of\/~$K$. Then there exists a bounded holomorphic function\/~$\Phi(A)$ on\/~$\Sigma_{\C}$ such that the restriction of\/~$\Phi(A)$ to\/~$\Sigma$ coincides with the function\/~$\CV_{\xi}(A)$.
\end{propos}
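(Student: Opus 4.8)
The plan is to build $\Phi(A)$ as an explicit finite sum of analytic continuations of simplex-volume functions, using the formula \eqref{eq_or_vol} for $\CV_\xi$ together with the holomorphic function $F$ from Section~\ref{section_simplex}, and then to control the monodromy of this continuation by means of the collapsing result Theorem~\ref{theorem_KCkappa}. Fix a chain $\eta=\sum_q c_q[u_{q,0},\dots,u_{q,n}]\in C_n(\Delta_{[m]})$ with $\partial\eta=\xi$. On $\Sigma$ we have, by \eqref{eq_or_vol} and \eqref{eq_orient_vol_F},
\begin{equation*}
\CV_\xi(A)=\frac{2}{\nu^n\,\Gamma\!\left(\tfrac{n+1}{2}\right)}\sum_{q=1}^N c_q\,F(\ba_{u_{q,0}},\dots,\ba_{u_{q,n}}),
\end{equation*}
so it is natural to try to extend each $F(\ba_{u_{q,0}},\dots,\ba_{u_{q,n}})$ to a holomorphic function on $\Sigma_\C$ (or at least to enough of it), and sum up. Since the edge lengths are $\ell_{uv}<2^{-m^2(n+4)}$, the entries $g_{uv}=\cos\ell_{uv}$ or $\cosh\ell_{uv}$ of the Gram matrix $G=A^tA$ that correspond to edges of $K$ satisfy $|g_{uv}-1|$ extremely small; the remaining entries $g_{uv}$, $\{u,v\}\notin K$, are unconstrained on $\Sigma_\C$, and it is exactly where some $|g_{uv}-1|\ge 1$ that the integral representation \eqref{eq_F(A)} of $F$ breaks down and analytic continuation along a path is forced.

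First I would set up the continuation locally: near any $A_0\in\Sigma_\C$, choose $\varkappa$ with $2^{-2m^2(r+1)}<\varkappa<1$ as in Corollary~\ref{cor_KCkappa} applied to $G_0=A_0^tA_0$ (here $r=\lceil (n+1)/2\rceil$, so $2r\ge n+1\ge\rank G_0$ — this is why we need $n\ge 3$ only to have $r\ge 2$ and the collapse land in dimension $<r$ but $\ge 1$, giving simply-connectedness of the relevant pieces), and let $\CK=\CK(G_0,\varkappa)$ be the clique complex of the graph $\Gamma(G_0,\varkappa)$. The key point is that on the open set where $|g_{uv}-1|<\varkappa$ precisely for $\{u,v\}\in\CK$, each $F(\ba_{u_{q,0}},\dots,\ba_{u_{q,n}})$ whose index set $\{u_{q,0},\dots,u_{q,n}\}$ spans a simplex of $\CK$ is given directly by the bounded holomorphic formula \eqref{eq_F(A)}, with bound \eqref{eq_est_F}. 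For the simplices $[u_{q,0},\dots,u_{q,n}]$ of $\eta$ that do \emph{not} lie in $\CK$, one uses the Plücker-type relation \eqref{eq_stand_relation} of Lemma~\ref{lem_zero_sum} to rewrite $\CV_\xi$: since $\CK$ collapses onto a subcomplex $L$ of dimension $<r\le n$, the cycle $\xi$, pushed into $C_{n-1}(\CK)$, is a boundary of a chain supported on $\CK$ — indeed the collapse gives a chain homotopy expressing any $(n-1)$-cycle of $\CK$ as $\partial$(chain in $\CK$) once $\dim L<n-1$, i.e. $r\le n-1$, which holds for $n\ge 3$. Thus one can replace $\eta$ by a homologous $\eta'$ with $\partial\eta'=\xi$ and $\supp\eta'\subseteq\CK$; the sum $\sum c'_q F(\dots)$ over simplices of $\CK$ is then manifestly bounded and holomorphic on that open set, and by the linearity-in-$\eta$ argument already used in the proof of the lemma after Definition~\ref{defin_flex} (together with Lemma~\ref{lem_zero_sum}) its value agrees with $\CV_\xi(A)$ wherever the latter is classically defined.

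Then I would glue. The local functions just constructed agree on overlaps because on any connected overlap they are two holomorphic functions coinciding on a nonempty (relatively open) piece of $\Sigma$ — one uses that $\Sigma_\C$, or rather each irreducible component, is connected and that $\Sigma$ is Zariski-dense in it, so identity-theorem-type uniqueness applies; alternatively one checks directly that both local formulas equal the same analytic continuation of $\CV_\xi$ along paths. Covering $\Sigma_\C$ by finitely many such charts indexed by the finitely many possible values of the "small-vs-large" pattern $\{\{u,v\}:|g_{uv}-1|<\varkappa\}$ and the finitely many $\varkappa$ coming from Corollary~\ref{cor_KCkappa}, one obtains a single globally defined holomorphic function $\Phi$ on $\Sigma_\C$; on each chart $|\Phi|$ is bounded by a universal constant times $N\max_q|c'_q|$, which one bounds using that $\eta'$ can be taken with coefficients controlled by the collapse (the quantitative content of the $2^{-m^2(n+4)}$ threshold is precisely what makes $\varkappa>2^{-2m^2(r+1)}$ compatible with the edge lengths, via the inequalities $1-\cos 2x\le 4(1-\cos x)$ and $\cosh 2x-1\le 6(\cosh x-1)$ used at the start of Section~\ref{section_KGkappa} to pass from "edge of $K$" to "edge of $\Gamma(G,\varkappa)$"). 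Hence $\Phi$ is bounded and holomorphic on $\Sigma_\C$ and restricts to $\CV_\xi$ on $\Sigma$.

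\textbf{Main obstacle.} The delicate step is the consistency of the gluing together with the uniformity of the bound: one must verify that the various local expressions for $\Phi$ — obtained by rewriting $\eta$ modulo the relations \eqref{eq_stand_relation} so as to be supported on the relevant clique complex — genuinely patch to a \emph{single-valued} holomorphic function on all of $\Sigma_\C$, i.e. that the analytic continuation has trivial monodromy. This is exactly where Theorem~\ref{theorem_KCkappa} enters in its full strength: the collapse onto a complex of dimension $<r\le n-1$ kills $H_{n-1}$ and, more importantly, makes the relevant chain-level choices unique up to boundaries, so that traversing a loop in $\Sigma_\C$ returns the same value. Controlling the coefficients of the rewritten chain $\eta'$ (to keep $\Phi$ bounded rather than merely holomorphic) — tracing the size of coefficients through the sequence of elementary collapses — is the technical heart, and is presumably where the precise exponent $m^2(n+4)$ in the hypothesis is consumed.
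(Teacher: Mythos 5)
Your overall architecture is the paper's: cover $\Sigma_{\C}$ by finitely many relatively open sets indexed by the possible clique complexes $\CK_l=\CK(A^tA,\varkappa)$ supplied by Corollary~\ref{cor_KCkappa} (with $r=[n/2]+1$, and $r\le n-1$ precisely because $n\ge 3$), on each such set write $\xi=\partial\eta_l$ with $\eta_l$ supported in $\CK_l$ (possible since the collapse kills $H_{n-1}$ and $H_n$ of $\CK_l$, and $K\subseteq\CK_l$ because $|g_{uv}-1|\le\ell_{uv}<2^{-m^2(n+4)}<\varkappa$ for edges of $K$), define the local function as $\frac{2}{\nu^n\Gamma\left(\frac{n+1}{2}\right)}\sum_q c_{l,q}F(\ba_{u_{q,0}},\ldots,\ba_{u_{q,n}})$, and bound it via \eqref{eq_est_F}. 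The genuine gap is in the gluing step, which you yourself flag as the delicate point but then justify incorrectly. Your primary argument --- the two local functions agree on an overlap because they agree on a piece of $\Sigma$ and $\Sigma$ is Zariski dense in $\Sigma_{\C}$ --- does not work: $\Sigma$ need not be dense in $\Sigma_{\C}$ (it can be empty, or miss entire irreducible components), and an overlap $U_{l_1}\cap U_{l_2}$ need not contain a single real point. Your fallback ("both equal the same analytic continuation of $\CV_\xi$ along paths") is circular, since single-valuedness of that continuation is exactly what is being proved. The argument that actually closes this is purely homological and needs two facts you never establish: (a) if $U_{l_1}\cap U_{l_2}\ne\emptyset$, then $\CK_{l_1}$ and $\CK_{l_2}$ are \emph{nested}, because at a common point $A$ both are clique complexes of the same Gram matrix $A^tA$ with two thresholds $\varkappa_1,\varkappa_2$; and (b) inside the larger complex, $\eta_{l_1}-\eta_{l_2}$ is an $n$-cycle, hence a boundary (as $H_n=0$ by the collapse), while Lemma~\ref{lem_zero_sum} says that $A\mapsto F(\ba_{u_0},\ldots,\ba_{u_n})$ defines a \emph{cocycle} on the clique complex, so its evaluation on a boundary vanishes and the two local values coincide. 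You gesture at this mechanism ("chain-level choices unique up to boundaries"), but without the nestedness observation there is nothing to compare, so the single-valuedness of $\Phi$ is not actually proved in your write-up.

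A second, smaller misdirection: the boundedness does not require tracing coefficient growth through the sequence of elementary collapses. There are only finitely many admissible complexes $\CK_l$; fixing one chain $\eta_l$ per complex once and for all gives $|\Phi(A)|\le\frac{2}{\Gamma\left(\frac{n+1}{2}\right)}\left(\frac{\pi^2(n+1)}{4}\right)^{\frac{n+1}{4}}\max_l\sum_q|c_{l,q}|$ on all of $\Sigma_{\C}$. Accordingly, the exponent $m^2(n+4)$ is not "consumed" in coefficient control, nor via the $\cos 2x$, $\cosh 2x$ inequalities (those only appear in the motivational discussion of Gram matrices of genuine points in $\X^n$); it is used to guarantee that the $\varkappa$ produced by Corollary~\ref{cor_KCkappa} satisfies $\varkappa>2^{-2m^2(r+1)}\ge 2^{-m^2(n+4)}$, so that every point of $\Sigma_{\C}$ lies in some chart $U_l$, and simultaneously that every edge of $K$ is an edge of $\CK_l$, so that $\xi$ is a chain of $\CK_l$ at all.
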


\begin{proof}
Let $\CK_1,\ldots,\CK_L$ be all simplicial complexes on the vertex set~$[m]$ that satisfy the following two conditions:
{\renewcommand{\theenumi}{\roman{enumi}}
\begin{enumerate}
\item There exists a point $A\in\Sigma_{\C}$ and a $\varkappa\in(2^{-m^2(n+4)},1)$   such that $\CK_l=\CK(A^tA,\varkappa)$.
\item $\CK_l$ collapses on a subcomplex of dimension less than~$n-1$.
\end{enumerate}
}

For $l=1,\ldots,L$, we denote by~$U_l$ the subset of~$\Sigma_{\C}$ consisting of all points $A$ such that $\CK_l=\CK(A^tA,\varkappa)$ for some $\varkappa\in(2^{-m^2(n+4)},1)$. By condition~(i), $U_l\ne \emptyset$.

\begin{lem}\label{lem_KU}
\textnormal{(1)} $K\subseteq \CK_l,$ $l=1,\ldots,L$.

\textnormal{(2)} If\/ $U_{l_1}\cap U_{l_2}\ne\emptyset,$ then either\/ $\CK_{l_1}\subseteq\CK_{l_2}$ or\/ $\CK_{l_2}\subseteq\CK_{l_1}$.

\textnormal{(3)} $U_1,\ldots,U_L$ are open subsets of\/~$\Sigma_{\C}$.

\textnormal{(4)} $\bigcup_{l=1}^LU_l=\Sigma_{\C}$.
\end{lem}
\begin{proof}
(1) We have $\CK_l=\CK(A^tA,\varkappa)$ for certain $A\in\Sigma_{\C}$ and certain $\varkappa\in(2^{-m^2(n+4)},1)$.     We put, $A^tA=(g_{uv})$.
By~\eqref{eq_polynom}, for all $\{u,v\}\in K$, we have $g_{uv}=\cos\ell_{uv}$ if $\X^n=\bS^n$ and $g_{uv}=\cosh\ell_{uv}$ if $\X^n=\Lambda^n$. Since $0\le \ell_{uv}<1$, we obtain that 
 \begin{equation*}
 |g_{uv}-1|\le\ell_{uv}<2^{-m^2(n+4)}<\varkappa,\qquad \{u,v\}\in K.
 \end{equation*}
Hence all edges of~$K$ are edges of~$\CK(A^tA,\varkappa)$. Since $\CK(A^tA,\varkappa)$ is a flag complex, we see that $K\subseteq\CK(A^tA,\varkappa)$. 
 
 (2) Suppose that $A\in U_{l_1}\cap U_{l_2}$; then $\CK_{l_1}=\CK(A^tA,\varkappa_1)$ and $\CK_{l_2}=\CK(A^tA,\varkappa_2)$ for some $\varkappa_1$ and~$\varkappa_2$. If $\varkappa_1\le \varkappa_2$, then $\CK_{l_1}\subseteq\CK_{l_2}$, and if $\varkappa_2\le \varkappa_1$, then $\CK_{l_2}\subseteq\CK_{l_1}$.
 
(3) Suppose that $A\in U_l$; then $\CK_l=\CK(A^tA,\varkappa)$ for certain $\varkappa\in(2^{-m^2(n+4)},1)$. We put, $A^tA=(g_{uv})$. Then $|g_{jk}-1|<\varkappa$ whenever $\{u,v\}\in\CK_l$ and $|g_{jk}-1|\ge\varkappa$ whenever $\{u,v\}\notin\CK_l$. Hence there is a $\varkappa\,'\in(2^{-m^2(n+4)},\varkappa)$ such that $|g_{jk}-1|<\varkappa\,'$ whenever $\{u,v\}\in\CK_l$ and $|g_{jk}-1|>\varkappa\,'$ whenever $\{u,v\}\notin\CK_l$. Then $\CK_l=\CK(B^tB,\varkappa\,')$ for all points $B\in\Sigma_{\C}$ that are close enough to~$A$. Therefore $U_l$ contains a neighborhood of~$A$ in~$\Sigma_{\C}$. Thus $U_l$ is an open subset of~$\Sigma_{\C}$.
 
(4) Let $A$ be an arbitrary point in~$\Sigma_{\C}$.  Since $A$ is a matrix of size $(n+1)\times m$, the rank of~$A^tA$ does not exceed $n+1$. Put $r=[n/2]+1$; then $\rank (A^tA)\le 2r$. By Corollary~\ref{cor_KCkappa}, there exists a $\varkappa\in(2^{-m^2(n+4)},1)$ such that the simplicial complex~$\CK(A^tA,\varkappa)$ collapses on a subcomplex of dimension less than~$r$. Since $n\ge 3$, we have $r\le n-1$. Hence the simplicial complex~$\CK(A^tA,\varkappa)$ satisfies conditions~(i) and~(ii). Therefore $\CK(A^tA,\varkappa)=\CK_l$ for certain~$l$. Then $A\in U_l$.
\end{proof}

Suppose that $A=(\ba_1,\ldots,\ba_m)\in U_l$. Since $\CK_l=\CK(A^tA,\varkappa)$ for certain $\varkappa<1$, we obtain that $|\langle\ba_{u},\ba_{v}\rangle-1|<1$ for all edges $\{u,v\}\in\CK_l$. Hence $(\ba_{u_0},\ldots,\ba_{u_n})\in\Omega$ for all $\{u_0,\ldots,u_n\}\in\CK_l$. Recall that the function~$F(A)$ given by~\eqref{eq_F(A)} is well defined and holomorphic on~$\Omega$. Therefore the formula 
\begin{equation}\label{eq_fl}
f_l(A)([u_0,\ldots,u_n])=F(\ba_{u_0},\ldots,\ba_{u_n})
\end{equation}
determines a well-defined holomorphic mapping
$$
f_l\colon U_l\to C^n(\CK_l;\C).
$$
It follows from Lemma~\ref{lem_zero_sum} that $\delta f_l(A)=0$ in $C^{n+1}(\CK_l;\C)$, i.\,e., $f_l(A)\in Z^{n}(\CK_l;\C)$ for all $A\in U_l$.

By Lemma~\ref{lem_KU}, the complex $K=\supp\xi$ is contained in~$\CK_l$ for all~$l$. Hence,  for each~$l$, we can consider~$\xi$ as an element of~$C_{n-1}(\CK_l)$. Since $\CK_l$ collapses on a subcomplex of dimension less than~$n-1$, we obtain that the homology groups~$H_j(\CK_l)$ are trivial for all $j\ge n-1$.  Hence there exists a chain $\eta_l\in C_{n}(\CK_l)$ such that $\partial\eta_l=\xi$.  We put
\begin{equation}\label{eq_Phi_l}
\Phi_l(A)=\frac{2}{\nu^n\,\Gamma\left(\frac{n+1}{2}\right)}\,f_l(A)(\eta_l).
\end{equation}
Then the function~$\Phi_l(A)$ is well defined and holomorphic on~$U_l$. Since $f_l(A)\in Z^{n}(\CK_l;\C)$ and $H_n(\CK_l)=0$, we see that the function~$\Phi_l(A)$ is independent of the choice of the chain~$\eta_l$.

\begin{lem}\label{lem_Phi12}
$\Phi_{l_1}(A)=\Phi_{l_2}(A)$ for all\/ $A\in U_{l_1}\cap U_{l_2}$.
\end{lem}
\begin{proof}
Since $A\in U_{l_1}\cap U_{l_2}$, we have $\CK_{l_1}=\CK(A^tA,\varkappa_1)$ and $\CK_{l_1}=\CK(A^tA,\varkappa_2)$, where $\varkappa_1,\varkappa_2<1$. Without loss of generality we may assume that $\varkappa_1>\varkappa_2$. Then $\CK_{l_1}\supseteq\CK_{l_2}$. Hence, we can consider $\eta_{l_2}$ as an element of the group~$C_n(\CK_{l_1})$. Obviously, the restriction of the cochain~$f_{l_1}(A)$ to~$\CK_{l_2}$ coincides with~$f_{l_2}(A)$. 
Since $\partial(\eta_{l_1}-\eta_{l_2})=0$ and $H_n(\CK_{l_1})=0$, we obtain that there is a  chain $\zeta\in C_{n+1}(\CK_{l_1})$ such that $\partial\zeta=\eta_{l_1}-\eta_{l_2}$. Then 
$$
f_{l_1}(A)(\eta_{l_1})-f_{l_2}(A)(\eta_{l_2})=f_{l_1}(A)(\partial\zeta)=(-1)^n(\delta f_{l_1}(A))(\zeta)=0.
$$
Thus $\Phi_{l_1}(A)=\Phi_{l_2}(A)$.
\end{proof}
 
Let us proceed with the proof of Proposition~\ref{propos_Phi}. It follows from assertions~(3) and~(4) of Lemma~\ref{lem_KU}, and Lemma~\ref{lem_Phi12} that there exists a holomorphic function $\Phi(A)$ on~$\Sigma_{\C}$ such that $\Phi(A)=\Phi_l(A)$ for all~$A\in U_l$, $l=1,\ldots,L$. 
Suppose that 
$$
\eta_l=\sum_{q=1}^{N_l}c_{l,q}[u_{l,q,0},\ldots,u_{l,q,n}],\qquad c_{l,q}\in\Z.
$$
Substituting this to~\eqref{eq_fl} and~\eqref{eq_Phi_l}, we obtain that
\begin{equation}\label{eq_Phi(A)_l}
\Phi(A)=\frac{2}{\nu^n\Gamma\left(\frac{n+1}2\right)}\sum_{q=1}^{N_l}c_{l,q}F\left(\ba_{u_{l,q,0}},\ldots,\ba_{u_{l,q,n}}\right)
\end{equation}
whenever $A\in U_l$. It follows from~\eqref{eq_or_vol} and~\eqref{eq_orient_vol_F} that the restriction of the function $\Phi(A)$ to~$U_l\cap\Sigma$ coincides with~$\CV_{\xi}(A)$. Since this is true for all~$l$, we see that the restriction of~$\Phi(A)$ to~$\Sigma$ coincides with~$\CV_{\xi}(A)$. 

Finally, notice that the function~$\Phi(A)$ is bounded. Indeed, it follows from~\eqref{eq_est_F} and~\eqref{eq_Phi(A)_l} that 
$$
|\Phi(A)|<\frac{2}{\Gamma\left(\frac{n+1}2\right)}\cdot\left(\frac{\pi^2(n+1)}{4}\right)^{\frac{n+1}4}\cdot
\max\left(\sum_{q=1}^{N_1}|c_{1,q}|,\ldots,\sum_{q=1}^{N_L}|c_{L,q}|\right)
$$  
for all $A\in \Sigma_{\C}$. 
\end{proof}

\begin{proof}[Proof of Theorem~\ref{theorem_main2}]
The function~$\Phi(A)$ in Proposition~\ref{propos_Phi} is a bounded holomorphic function on an affine algebraic variety~$\Sigma_{\C}$. By Corollary~\ref{cor_bhf}, the restriction of~$\Phi(A)$ to every irreducible component of~$\Sigma_{\C}$ is constant. On the other hand, the restriction of~$\Phi(A)$ to the configuration space~$\Sigma=\Sigma(\xi,\X^n,\bell)$ coincides with the function~$\CV_{\xi}(A)$. Therefore the restriction of~$\CV_{\xi}(A)$ to every connected component of~$\Sigma$ is constant. 
\end{proof}

\end{document}